
\documentclass[12pt]{amsart}
\usepackage{graphicx}
\usepackage{multicol,multirow}
\usepackage{amsmath,amssymb,amsfonts}
\usepackage{mathrsfs}
\usepackage{amsthm}
\usepackage{rotating}
\usepackage{appendix}
\usepackage{ifpdf}
\usepackage{hyperref}
\usepackage[T1]{fontenc}
\usepackage{newtxtext}
\usepackage{newtxmath}
\usepackage{textcomp}
\usepackage{xcolor}
\usepackage{lipsum}

\hoffset=-.7in
\voffset=-.7in
\setlength{\textwidth}{6in}
\setlength{\textheight}{9.1in}
\setlength{\marginparwidth}{0.7in}

\newtheorem{theorem}{Theorem}[section]
\newtheorem{lemma}[theorem]{Lemma}
\theoremstyle{definition}
\newtheorem{definition}{Definition}
\theoremstyle{plain}
\newtheorem{proposition}{Proposition}
\newtheorem{claim}{Claim}

\theoremstyle{plain}
\newtheorem{corollary}{Corollary}
\theoremstyle{plain}
\newtheorem{conjecture}{Conjecture}
\title{On roundness of rotation sets}

\begin{document}


\author{Boris Perrot, Jan Boro\'nski and Alex Clark}

\address[Boris Perrot]{ENS Paris-Saclay (Université Paris-Saclay), 4, avenue des Sciences, 91190 Gif-sur-Yvette, France}
\email{boris.perrot@ens-paris-saclay.fr}

\address[J. Boro\'nski]{Faculty of Mathematics and Computer Science, Jagiellonian University, ul. Łojasiewicza 6, 30-348, Krak\'ow, Poland -- and -- National Supercomputing Centre IT4Innovations\\ 
	IRAFM, University of Ostrava\\
	30. dubna 22, 70103 Ostrava, Czech Republic}
\email{jan.boronski@uj.edu.pl, jan.boronski@osu.cz}

\address[Alex Clark]{School of Mathematical Sciences, Queen Mary University of London, Mile End Road, E1 4NS, London,UK}
\email{alex.clark@qmul.ac.uk}

\keywords{rotation set, roundness, torus diffeomorphism}
\maketitle

\begin{abstract}
Motivated by the question whether a round disk can be realized as the rotation set of a torus diffeomorphism, we study the roundness of rotation sets of a parametric family of torus diffeomorphisms $F_\rho$, where the parameter $\rho$ ranges over irrational numbers in $(0,1)$. Each $F_\rho$ is a Kwapisz-like diffeomorphism with a 2-dimensional non-polygonal rotation set 
  $$\Lambda'_\rho = \operatorname{conv}\left(\left\{(\pm\frac{\lceil m\rho \rceil}{m+n+1}, \pm\frac{\lceil n\rho \rceil}{m+n+1}): m, n \in \mathbb{N} _0, \lceil m\rho\rceil - m\rho<\rho,\lceil n\rho\rceil - n\rho<\rho\right\}\right)$$  whose extreme point set contains exactly four (two-sided) accumulation points. We define the roundness of $\Lambda'_\rho$ as the ratio $R_\rho=\frac{\operatorname{Area}(\Lambda'_\rho)}{\pi\rho^2}$, and give its upper and lower bounds in terms of $\rho$. 
 $R_\rho$ is neither monotone nor continuous. 
\end{abstract}
\begin{figure}[!h]
\begin{center}
\includegraphics[width=11cm]{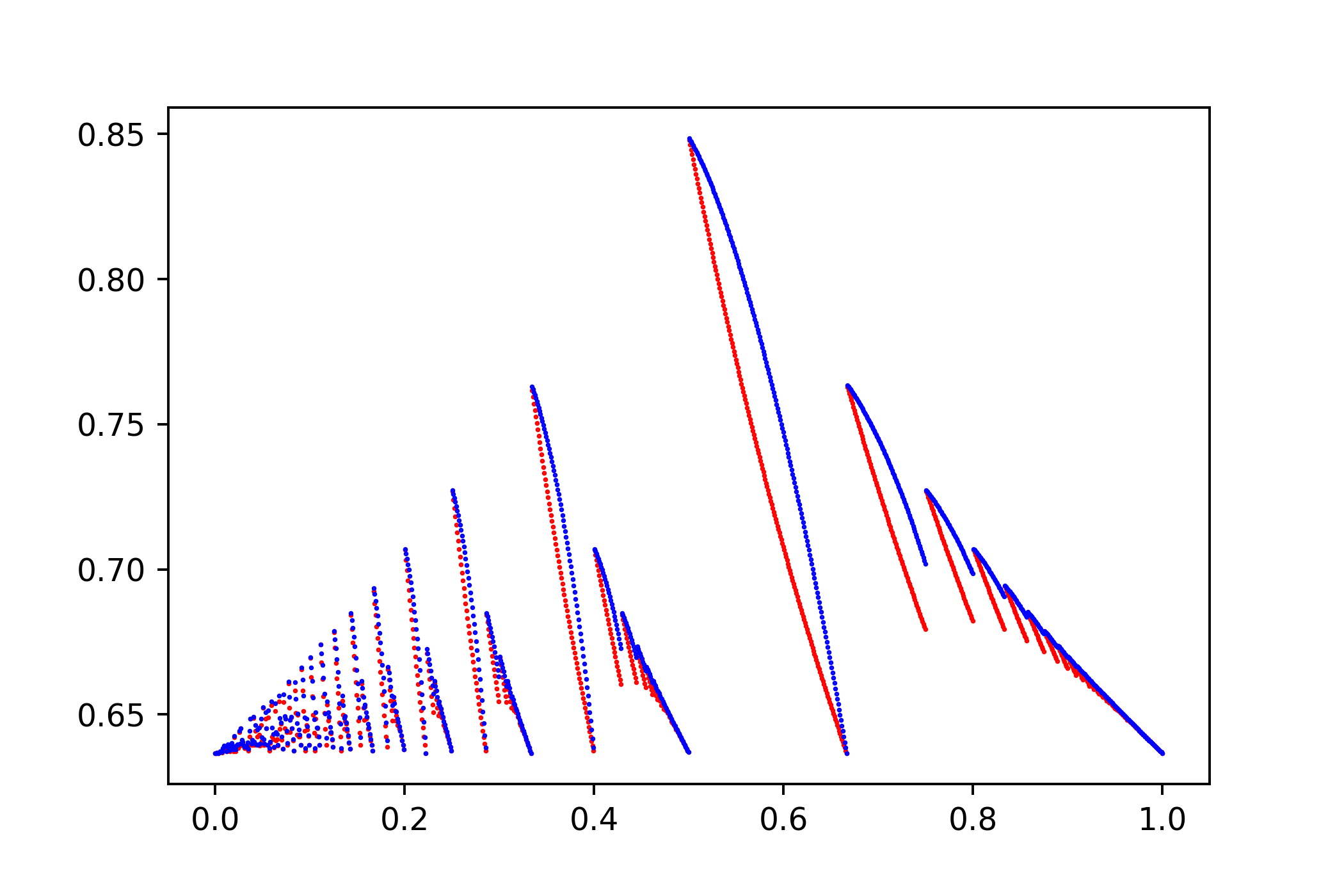}
\end{center}
    \vskip-0.4cm\caption{Lower (red) and upper (blue) bounds for roundness $R_\rho$ of the rotation set $\Lambda'_\rho$ as a function of $\rho$}
		\label{fig:roundness}
\end{figure}
\section{Introduction}
\subsection{Motivation and main results}
This paper is motivated by a well-known open problem in rotation theory, whether the round disk can appear as the rotation set of a torus diffeomorphism; see e.g. \cite{JKT}. Several researchers have worked towards a solution to this problem in the last 35 years, but beyond examples with convex polygons of arbitrary shape \cite{Kwapisz1}, and examples of non-polygonal rotation sets with countably many extreme points and one or two accumulation points (first by Kwapisz \cite{Kwapisz2}, and then by Boyland, de Carvalho, and Hall \cite{BCH2}) no progress has been made for nearly a decade now. For the moment, we propose to pause the search for examples with a richer structure of extreme point sets, and instead to study how closeness to a round disk with respect to the area of the set may depend, in a family of non-polygonal examples, on a parameter. As a family of choice we take a variant of Kwapisz's construction, for which the irrational number $\rho$ of the Denjoy dynamics on the separate circles serves as the parameter. Let us recall Kwapisz's result.
\begin{theorem}[Kwapisz, 1994 \cite{Kwapisz2}]
Let $\rho\in(0, 1)$ be irrational. There is a $C^1$-diffeomorphism $G:\mathbb{T}^2\rightarrow\mathbb{T}^2$ homotopic to identity whose rotation set $\rho(G)$ equals 

$$\Lambda_\rho:=\text{Conv}\left(\left\{(0, \rho), (\rho, 0)\right\}\cup\left\{\left(\tfrac{\lceil m\rho\rceil}{m+n+1}, \tfrac{\lceil n\rho\rceil}{m+n+1}\right)\,|\,m, n\in\mathbb{N}, \alpha_m^\rho<\rho, \alpha_n^\rho<\rho\right\}\right)$$ 
where $\alpha^\rho_m:=\lceil m\rho\rceil-m\rho$ for any $m\in\mathbb{N}$. $\Lambda_\rho$ has infinitely many extreme points and exactly two accumulation points of extreme points, $(0, \rho)$ and $(\rho, 0)$.
\end{theorem}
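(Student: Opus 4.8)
The plan is to construct $G$ by a Denjoy-type blow-up of a piecewise-translation model on $\mathbb{T}^2$ whose combinatorics are read off from the Ostrowski (continued-fraction) expansion of $\rho$, and then to sandwich $\rho(G)$ between copies of $\Lambda_\rho$ from both sides. Throughout I would lean on the Misiurewicz--Ziemian structure theory: for a torus homeomorphism homotopic to the identity the rotation set is compact and convex and equals the closed convex hull of the rotation vectors of the ergodic invariant probability measures, so that every periodic orbit and every uniquely ergodic invariant set contributes a point of $\rho(G)$. A preliminary, purely number-theoretic step would establish, via the three-distance theorem, that the admissible indices $m$ (those with $\alpha_m^\rho<\rho$) are exactly those with $m\rho \bmod 1 \in (1-\rho,1)$, that the vectors $v_{m,n}:=\bigl(\tfrac{\lceil m\rho\rceil}{m+n+1},\tfrac{\lceil n\rho\rceil}{m+n+1}\bigr)$ converge to $(\rho,0)$ as $m\to\infty$ with $n$ fixed admissible (and to $(0,\rho)$ symmetrically), and that together with $(0,0)$, $(0,\rho)$, $(\rho,0)$ they form a set which is locally finite away from those two corners, with convex hull $\Lambda_\rho$ two-dimensional and extreme-point set precisely $\{v_{m,n}\}\cup\{(0,\rho),(\rho,0)\}$, accumulating only at $(0,\rho)$ and $(\rho,0)$.

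For the construction I would lift to $\mathbb{R}^2$ and decompose a fundamental domain into ``boxes'' obtained by blowing up, in each coordinate separately, the forward orbit of $0$ under $t\mapsto t+\rho$, choosing the inserted widths summable and with consecutive ratios tending to $1$ so that the map can be realized as a $C^1$ (necessarily not $C^2$) diffeomorphism by the classical Denjoy recipe. On each box $G$ would act, up to the smoothing, by a translation pushing predominantly in the $+x$ or in the $+y$ direction, the pattern being dictated by the Ostrowski digits of $\rho$. The design should guarantee: (i) the blown-up $0$-orbit in the first coordinate, with the second frozen, supports a uniquely ergodic Denjoy homeomorphism of rotation number $\rho$, yielding an invariant set with rotation vector $(\rho,0)$, and symmetrically $(0,\rho)$; (ii) for each admissible $(m,n)$ a periodic orbit of period $m+n+1$ that advances $m$ steps in $x$, then $n$ steps in $y$, then makes one return step, with total displacement $(\lceil m\rho\rceil,\lceil n\rho\rceil)$ --- the ceiling being exactly what admissibility forces when the $m$, resp.\ $n$, successive advances close up into an integer number of turns; and (iii) a fixed point with rotation vector $(0,0)$.

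The lower bound $\Lambda_\rho\subseteq\rho(G)$ is then immediate from (i)--(iii) and Misiurewicz--Ziemian: $\rho(G)$ contains all $v_{m,n}$, the point $(0,0)$, and $(0,\rho),(\rho,0)$, hence their closed convex hull. For the upper bound I would use the box decomposition as a generalized Markov partition, with alphabet indexed by the Ostrowski data, code orbit segments symbolically, and prove a closing/shadowing statement: every long orbit segment is, up to a displacement error bounded independently of its length, a concatenation of the periodic segments from (ii) and pieces of the Denjoy sets from (i). Averaging displacement over such a segment then lands within $o(1)$ of a convex combination of the $v_{m,n}$ and $(0,\rho),(\rho,0)$; letting the length diverge and applying the ergodic theorem shows that every ergodic rotation vector lies in $\Lambda_\rho$, so $\rho(G)\subseteq\Lambda_\rho$. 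The assertions about the extreme points and their two accumulation points are then read off from the preliminary step.

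I expect the main obstacle to be this upper bound together with the smoothness constraint: one must verify that the single $C^1$ diffeomorphism carries all the advertised invariant sets \emph{and no orbit whose averaged displacement escapes $\Lambda_\rho$}, which forces the box decomposition and its symbolic dynamics to be rigid enough that the closing argument produces exactly the vectors $v_{m,n}$ together with their two limit points and nothing more --- all while keeping the inserted widths in the narrow regime that delivers a $C^1$ (and, as for the classical Denjoy counterexample, not $C^2$) diffeomorphism. A secondary technical point is the infinite alphabet coming from the possibly unbounded Ostrowski digits of a general irrational $\rho$, where one must control how the box sizes and the corresponding periodic orbits behave as the digits grow.
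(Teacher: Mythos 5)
Your outline diverges fundamentally from Kwapisz's construction, and the divergence is exactly where the gap sits. Kwapisz does not work with a two-dimensional box decomposition at all: he first builds the dynamics on a one-dimensional model, a bouquet $X$ of two circles (one to be embedded almost horizontally in $\mathbb{T}^2$, one almost vertically), carrying a Denjoy $C^1$-diffeomorphism $\psi$ of rotation number $\rho$ with a distinguished wandering interval through which, and only through which, an orbit may switch from one circle to the other. On this $1$-complex every orbit is forced to be a concatenation of runs of $m$ steps along the horizontal circle and $n$ steps along the vertical one; the gate condition for closing such a loop is precisely $\alpha_m^\rho<\rho$ and $\alpha_n^\rho<\rho$, and its displacement is $(\lceil m\rho\rceil,\lceil n\rho\rceil)$ over $m+n+1$ iterates. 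The upper bound $\rho(G)\subseteq\Lambda_\rho$ is then tractable because the bouquet is realized as a global attractor of a $C^1$ diffeomorphism of $\mathbb{T}^2$ (with sources in the complementary disks), so every rotation vector is computed on $X$, where the combinatorics are completely rigid. In your scheme the upper bound rests on an unproved ``closing/shadowing statement'' for a genuinely two-dimensional Markov-type partition; you correctly identify this as the main obstacle, but nothing in the proposal explains why the symbolic dynamics of such a partition would exclude orbits that interleave $x$- and $y$-advances in patterns other than the admissible $(m,n)$-loops, nor how the condition $\alpha_m^\rho<\rho$ and the numerators $\lceil m\rho\rceil$ would actually emerge from the Ostrowski digits. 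Without the reduction to a one-dimensional invariant set (or an equivalent rigidity mechanism), the inclusion $\rho(G)\subseteq\Lambda_\rho$ is not established, and that inclusion is the whole content of the theorem.

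A secondary error: your preliminary step asserts that the extreme-point set of $\Lambda_\rho$ is precisely $\{v_{m,n}\}\cup\{(0,\rho),(\rho,0)\}$. This is false: many $v_{m,n}$ lie in the interior of the hull or in the relative interior of an edge (for instance, for $\rho\in\left(\tfrac{3}{5},\tfrac{2}{3}\right)$ the best diagonal point $D_\rho$ is the midpoint of an edge and is not extreme), and a substantial part of the present paper is devoted to sorting out which $v_{m,n}$ are extreme. For the theorem you only need the weaker facts that infinitely many of the $v_{m,n}$ are extreme and that the extreme points accumulate only at $(0,\rho)$ and $(\rho,0)$; those do follow from local finiteness away from the two corners, but you should not claim the full characterization.
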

We study the roundness of rotation sets of a closely related parametric family of torus diffeomorphisms $F_\rho$, where the parameter $\rho$ ranges over irrational numbers in $(0,1)$. Each $F_\rho$ is a Kwapisz-like diffeomorphism with a 2-dimensional non-polygonal rotation set,  whose extreme point set contains exactly four (two-sided) accumulation points. These diffeomorphisms are obtained by a small modification to Kwapisz's original construction (described in the Appendix of the present article), whose aim is to place the rotation sets in all four quadrants, by adding symmetric images of the Kwapisz's original rotation sets. This results in rotation sets that can better fill a round disk of radius $\rho$. 
  \begin{theorem}\label{four}
For each $\rho\in(0,1)\setminus \mathbb{Q}$ there exists a $C^1$-diffeomorphism $F_\rho:\mathbb{T}^2\to\mathbb{T}^2$ homotopic to the identity, whose rotation set is non-polygonal and given by
$$\Lambda'_\rho = \operatorname{conv}\left\{(\pm\frac{\lceil m\rho \rceil}{m+n+1}, \pm\frac{\lceil n\rho \rceil}{m+n+1})\,|\, m, n \in \mathbb{N} _0, \lceil m\rho\rceil - m\rho<\rho,\lceil n\rho\rceil - n\rho<\rho\right\}$$
\end{theorem}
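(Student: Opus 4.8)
I would prove Theorem~\ref{four} by an explicit construction -- a direct modification of Kwapisz's -- whose details I would place in the Appendix; here is the plan.

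\emph{Step 1: $\Lambda'_\rho$ as a symmetrization.} The organizing observation is the identity
\[
\Lambda'_\rho \;=\; \operatorname{conv}\Bigl(\,\bigcup_{\varepsilon\in\{-1,+1\}^2}\varepsilon\cdot\Lambda_\rho\,\Bigr),\qquad \varepsilon\cdot(x,y):=(\varepsilon_1x,\varepsilon_2y),
\]
with $\Lambda_\rho$ Kwapisz's set from the theorem above. Indeed, the generators of $\Lambda'_\rho$ with $m,n\ge 1$ are exactly the images, under the sign group $\Gamma\cong\mathbb{Z}_2\times\mathbb{Z}_2$, of the generators of $\Lambda_\rho$; the generators with $m=0$ or $n=0$ sit on a coordinate axis at distance $\tfrac{\lceil n\rho\rceil}{n+1}<\rho$ from the origin -- this is precisely the admissibility condition $\alpha^\rho_n<\rho$ -- hence are midpoints of a point of $\Lambda_\rho$ on the chord $[(0,\rho),(\rho,0)]$ and its reflection; and the four accumulation points $(\pm\rho,0)$, $(0,\pm\rho)$ are limits of such generators and already lie in $\bigcup_\varepsilon\varepsilon\cdot\Lambda_\rho$. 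Thus $\Lambda'_\rho$ is just the $\Gamma$-symmetrization of Kwapisz's rotation set.

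\emph{Step 2: the diffeomorphism.} Each $\varepsilon$ is induced by $\operatorname{diag}(\varepsilon_1,\varepsilon_2)\in GL_2(\mathbb{Z})$, hence by a linear automorphism $\sigma_\varepsilon$ of $\mathbb{T}^2$, and $\sigma_\varepsilon\circ G\circ\sigma_\varepsilon^{-1}$ is a $C^1$-diffeomorphism isotopic to the identity with rotation set $\varepsilon\cdot\Lambda_\rho$. One cannot realize the convex hull of the four sets $\varepsilon\cdot\Lambda_\rho$ by confining four copies of the dynamics to disjoint disks, since each $\varepsilon\cdot\Lambda_\rho$ is already two-dimensional and winds around both generators of $\pi_1(\mathbb{T}^2)$, so the dynamics must stay global. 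Instead one keeps Kwapisz's global skew-product-type construction, assembled from $C^1$ Denjoy circle dynamics of rotation number $\rho$ along the two generating directions, and performs the announced ``small modification'': the one-directional Denjoy pieces are replaced by pieces admitting winding of either sign around each generator, so that the single diffeomorphism $F_\rho$ produces, besides Kwapisz's periodic and quasi-periodic orbits, all of their $\Gamma$-images. As in \cite{Kwapisz2}, the $C^1$ regularity survives because the Denjoy maps and the gluing maps can be chosen smooth with controlled derivatives.

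\emph{Step 3: $\rho(F_\rho)=\Lambda'_\rho$.} The inclusion $\supseteq$ is the easy half: for every admissible $(m,n)\in\mathbb{N}_0^2$ and every sign vector $\varepsilon$, the construction carries a periodic orbit of period $m+n+1$ winding $\varepsilon_1\lceil m\rho\rceil$ and $\varepsilon_2\lceil n\rho\rceil$ times around the two generators -- rotation vector the corresponding generator of $\Lambda'_\rho$ -- while the four accumulation points come from Denjoy orbits winding at speed $\rho$ around one generator in the prescribed direction; compactness and convexity of rotation sets (Misiurewicz--Ziemian) then give all of $\Lambda'_\rho$. The reverse inclusion $\subseteq$ is the core of the argument: one must show $\tfrac1k\bigl(\widetilde{F}_\rho^{\,k}(\tilde x)-\tilde x\bigr)\in\Lambda'_\rho$ up to $o(1)$ for every orbit. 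For this one codes an orbit over $k$ steps by its itinerary of maximal blocks of ``winding positively'', ``winding negatively'' or ``resting'' along each generator; the admissibility condition $\alpha^\rho_m<\rho$ is exactly what bounds, by $\lceil m\rho\rceil$, the winding a block of length $m+1$ can accumulate, so that each block's normalized contribution is a generator of $\Lambda'_\rho$ and the total displacement is a convex combination of these plus a vanishing boundary term.

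I expect the inclusion $\rho(F_\rho)\subseteq\Lambda'_\rho$ to be the main obstacle -- in particular, controlling ``mixed'' orbits that alternate positive and negative winding around the same generator, and verifying that the bidirectional modification does not open up rotation vectors outside $\Lambda'_\rho$; this is where the fine geometry of the modified construction is genuinely used. Once that bookkeeping is in place the theorem follows, and all these verifications would be carried out in the Appendix.
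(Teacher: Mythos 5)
Your proposal follows essentially the same route as the paper's Appendix: the paper also symmetrizes Kwapisz's construction by adjoining reversed Denjoy dynamics along each generator (concretely, two extra circles $\mathbb{S}^{(h')},\mathbb{S}^{(v')}$ in the bouquet, with the composite $(-F^{(h')})\circ(-F^{(v')})\circ F^{(h)}\circ F^{(v)}$), and then runs Kwapisz's retraction/source machinery to get a $C^1$ diffeomorphism. The ``mixed orbits'' issue you flag as the main obstacle is exactly what the paper's opening Claim handles -- showing the difference points $\bigl(\tfrac{\lceil m\rho\rceil-\lceil m'\rho\rceil}{m+m'+n+n'+1},\tfrac{\lceil n\rho\rceil-\lceil n'\rho\rceil}{m+m'+n+n'+1}\bigr)$ generate the same convex hull -- so your plan is sound and at the same level of detail as the paper's own sketch.
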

  We define the roundness of $\Lambda'_\rho$ as the ratio $R_\rho=\frac{\operatorname{Area}(\Lambda'_\rho)}{\pi\rho^2}$, and give its upper and lower bounds in terms of $\rho$. 
  \begin{figure}[h]
\begin{center}
\includegraphics[width=0.48\textwidth]{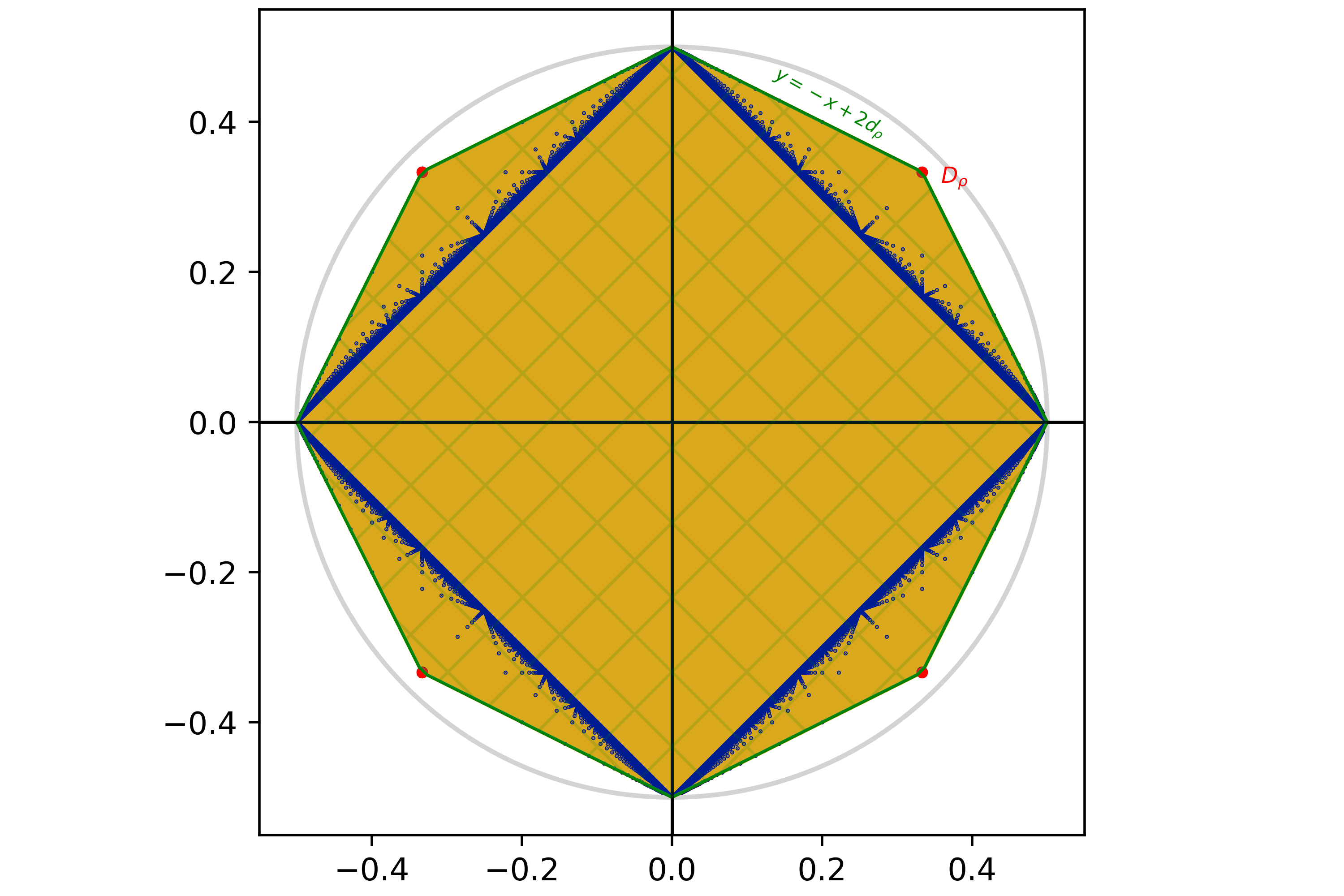}
\includegraphics[width=0.48\textwidth]{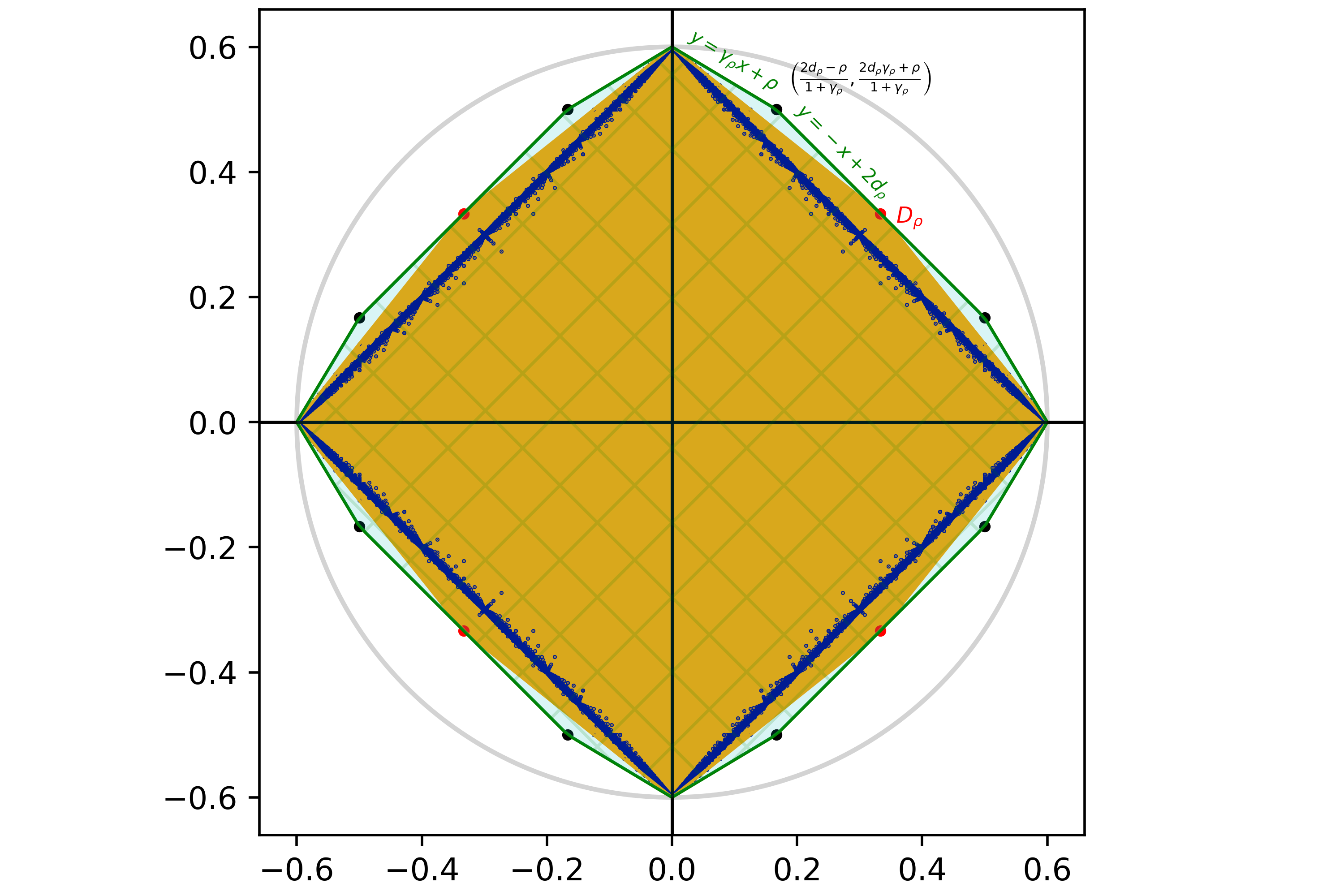}
\begin{minipage}[c]{0.8\textwidth}
\begin{center}
\caption{An approximation of the non-polygonal rotation set $\Lambda'_\rho$ for $\rho=0.5^+$ (left) and $\rho=0.6^-$ (right) }\label{pic:f}
\end{center}
\end{minipage}
\end{center}
\end{figure}
  \begin{theorem}[Bounds on roundness of $\Lambda'_\rho$]\label{thm:bounds on roundness}
Let $d_\rho=\left(\left\lfloor\tfrac{1}{1-\rho}\right\rfloor-1\right)\left(2\left\lfloor\tfrac{1}{1-\rho}\right\rfloor-1\right)^{-1}$. Then
$$\tfrac{4}{\pi}\tfrac{d_\rho}{\rho}\leqslant R_\rho\leqslant \tfrac{4}{\pi}\tfrac{1}{1-\rho\left\lfloor\tfrac{1}{\rho}\right\rfloor}\left(-1+4\tfrac{d_\rho}{\rho}-2\left(1+\rho\left\lfloor\tfrac{1}{\rho}\right\rfloor\right)\tfrac{d_\rho^2}{\rho^2}\right)$$
\end{theorem}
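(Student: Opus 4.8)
The plan is to sandwich $\Lambda'_\rho$ between an explicit inscribed octagon and an explicit circumscribed octagon, using the invariance of $\Lambda'_\rho$ under the four coordinate reflections and computing the areas with the shoelace formula; the nontrivial inputs are two elementary facts about the index set $S=\{m\in\mathbb N_0:\lceil m\rho\rceil-m\rho<\rho\}$. I will restrict to the range $\rho\in(\tfrac12,1)$, where the bounds are nontrivial; there $\lfloor1/\rho\rfloor=1$ (so $\rho\lfloor1/\rho\rfloor=\rho$) and, writing $j=\lfloor 1/(1-\rho)\rfloor\ge2$, one has $\rho\in(\tfrac{j-1}{j},\tfrac{j}{j+1})$. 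The first thing I would record is the description of $S$: for $m\ge1$, setting $k=\lceil m\rho\rceil$, the condition $m\in S$ is equivalent to $\tfrac{k}{m+1}<\rho<\tfrac{k}{m}$, whence $S=\{0\}\cup\{\lfloor k/\rho\rfloor:k\ge1\}$ and $\lceil\lfloor k/\rho\rfloor\rho\rceil=k$. I would also note the elementary inequalities $d_\rho<\rho<2d_\rho$ and $d_\rho(1+\rho)<\rho$ (immediate from $\rho\in(\tfrac{j-1}{j},\tfrac{j}{j+1})$, $d_\rho=\tfrac{j-1}{2j-1}$), and that $j-1\in S$ because $\lceil(j-1)\rho\rceil-(j-1)\rho=(j-1)(1-\rho)<\rho$. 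Finally, $\Lambda'_\rho$ is compact (it is a rotation set), so it contains all limit points of its generating set.

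For the lower bound I would exhibit eight points of $\Lambda'_\rho$. The four points $(\pm\rho,0)$, $(0,\pm\rho)$ lie in $\Lambda'_\rho$ as limits of the points $(\tfrac{\lceil m\rho\rceil}{m+1},0)$, resp.\ $(0,\tfrac{\lceil m\rho\rceil}{m+1})$, $m\in S$. The four points $(\pm d_\rho,\pm d_\rho)$ are themselves in the generating set: take $m=n=j-1\in S$, for which $\lceil(j-1)\rho\rceil=j-1$ and the corresponding point is $(\tfrac{j-1}{2j-1},\tfrac{j-1}{2j-1})=(d_\rho,d_\rho)$. The inequalities $d_\rho<\rho<2d_\rho$ make the octagon on these eight vertices convex, hence contained in $\Lambda'_\rho$; its first-quadrant part is the quadrilateral $(0,0),(\rho,0),(d_\rho,d_\rho),(0,\rho)$, whose shoelace area is $\rho d_\rho$, giving $\operatorname{Area}(\Lambda'_\rho)\ge4\rho d_\rho$ and $R_\rho\ge\tfrac4\pi\tfrac{d_\rho}{\rho}$.

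For the upper bound I would show $\Lambda'_\rho$ is contained in the intersection of the three pairs of half-planes $x+\rho|y|\le\rho$, $\rho|x|+y\le\rho$ and $|x|+|y|\le2d_\rho$. Since these are closed and convex and $\Lambda'_\rho$ is the closed convex hull of the points $(\pm\tfrac{\lceil m\rho\rceil}{m+n+1},\pm\tfrac{\lceil n\rho\rceil}{m+n+1})$, $m,n\in S$, it suffices to verify the three inequalities on these points, and by symmetry only on the first-quadrant representative $P_{m,n}$. For the first cone, using $\rho(m+n+1)-\lceil m\rho\rceil=n\rho+\bigl((m+1)\rho-\lceil m\rho\rceil\bigr)$ and $(m+1)\rho-\lceil m\rho\rceil>0$, the inequality $x+\rho y\le\rho$ reduces to $\lceil n\rho\rceil\le n$, which holds for all $n$ since $\rho<1$; likewise for the second cone. (For general $\rho$ the slope is $1/(\rho\lfloor1/\rho\rfloor)$, which is why the statement carries $\rho\lfloor1/\rho\rfloor$; the sharp bound is $\lfloor1/\rho\rfloor\lceil n\rho\rceil\le n$ on $S$, which holds because $n<\lfloor1/\rho\rfloor\lceil n\rho\rceil$ together with $n\in S$ would put $n$ in the integer-free interval $\bigl(\lfloor1/\rho\rfloor\lceil n\rho\rceil-1,\ \lfloor1/\rho\rfloor\lceil n\rho\rceil\bigr)$.) For the diagonal half-plane it is enough that $\lceil m\rho\rceil\le d_\rho(2m+1)$ for $m\in S$; writing $m=\lfloor k/\rho\rfloor=k+\lfloor k\gamma\rfloor$ with $k=\lceil m\rho\rceil$ and $\gamma=\tfrac1\rho-1\in(\tfrac1j,\tfrac1{j-1})$, this is equivalent to $k\le(j-1)(2\lfloor k\gamma\rfloor+1)$, which is clear for $k\le j-1$ (then $k\gamma<1$, $\lfloor k\gamma\rfloor=0$) and for $k\ge j$ follows from $\lfloor k\gamma\rfloor\ge\lfloor k/j\rfloor\ge\tfrac{k-j+1}{j}\ge\tfrac{k-j+1}{2(j-1)}$.

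Finally I would compute the circumscribed region. Using $d_\rho(1+\rho)<\rho$, along $x+\rho y=\rho$ one meets the diagonal line $x+y=2d_\rho$ before the opposite cone, so the intersection of the three half-planes with the first quadrant is the pentagon $(0,0),(\rho,0),(b,a),(a,b),(0,\rho)$ with $a=\tfrac{2d_\rho-\rho}{1-\rho}$, $b=2d_\rho-a$; its shoelace area equals $2d_\rho^2-\tfrac{(\rho-2d_\rho)^2}{1-\rho}=\tfrac{-\rho^2+4\rho d_\rho-2(1+\rho)d_\rho^2}{1-\rho}$, so $\operatorname{Area}(\Lambda'_\rho)\le\tfrac{4(-\rho^2+4\rho d_\rho-2(1+\rho)d_\rho^2)}{1-\rho}$, and dividing by $\pi\rho^2$ (and recalling $\rho=\rho\lfloor1/\rho\rfloor$ here) gives the asserted bound. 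The hard part is pinning down the three supporting lines---above all the exact slope of the corner cone (the arithmetic fact $\lfloor1/\rho\rfloor\lceil n\rho\rceil\le n$ on $S$, trivial for $\rho>\tfrac12$ but governing the $\rho\lfloor1/\rho\rfloor$ in the statement) and the diagonal bound---after which the two shoelace computations, though fiddly, are routine.
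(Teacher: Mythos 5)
Your proof is correct, and the two sandwich polygons you use are exactly the paper's: per quadrant, the inscribed quadrilateral $(0,0),(\rho,0),(d_\rho,d_\rho),(0,\rho)$ of area $\rho d_\rho$, and the circumscribed pentagon cut off by the supporting lines of slope $-\rho\lfloor 1/\rho\rfloor$ through $(0,\rho)$ and $(\rho,0)$ together with $x+y=2d_\rho$ (your $(a,b)$ is the paper's intersection point $\bigl(\tfrac{2d_\rho-\rho}{1+\gamma_\rho},\tfrac{2d_\rho\gamma_\rho+\rho}{1+\gamma_\rho}\bigr)$, and your convexity condition $d_\rho(1+\rho)<\rho$ is the paper's $d_\rho(\gamma_\rho-1)+\rho\geqslant0$). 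Where you genuinely diverge is in how the three supporting inequalities are established. The paper imports them from a long chain of earlier results: the partitions $I_{\rho,k}$, the best-diagonal-point theorems, the tile-by-tile maximization of $x_{m,n}+y_{m,n}$ (Corollary \ref{coro:maxima x_mn+y_mn, >0.5} plus Lemma \ref{lem:bounds on maxima, >0.5} and the case analysis $k=l$ versus $k\neq l$), and the sup-of-slopes computation. You instead work directly from the unified description $S=\{0\}\cup\{\lfloor k/\rho\rfloor:k\geqslant1\}$ and prove each line inequality on the generating points in a few lines; in particular your averaging observation that $\lceil m\rho\rceil\leqslant d_\rho(2m+1)$ for all $m\in S$ already implies $x_{m,n}+y_{m,n}\leqslant 2d_\rho$ for all $m,n\in S$ is a real simplification over the paper's argument (the paper's finer analysis is only needed later, to decide when $D_\rho$ is an extreme point, which the area bound does not require). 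The one caveat is your restriction to $\rho\in(\tfrac12,1)$: this matches the literal statement, whose explicit formula for $d_\rho$ gives $d_\rho=0$ and a negative upper bound when $\rho<\tfrac12$, but the paper's own proof treats both cases by taking $d_\rho$ to be the general best-diagonal value of Theorems \ref{thm:best diagonal point, >0.5} and \ref{thm:best diagonal point, <0.5} and $\gamma_\rho=-\rho\lfloor1/\rho\rfloor$; your parenthetical remark ($\lfloor1/\rho\rfloor\lceil n\rho\rceil\leqslant n$ on $S$) correctly identifies the arithmetic input needed to extend your argument to that case, but you would also need the $\rho<\tfrac12$ analogue of the diagonal bound to make the extension complete.
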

Note that the two bounds give $\lim_{\rho\to \frac{k-1}{k}^+} R_\rho=\frac{4k}{\pi(2k-1)}$ for any $k\in\mathbb{N}\setminus\{0\}$, and $$\sup_{\rho\in(\frac{1}{2},1)} R_\rho=\lim_{\rho\to\frac{1}{2}^+} R_\rho=\frac{8}{3\pi}, \inf_{\rho\in(\frac{1}{2},1)} R_\rho=\lim_{\rho\to\frac{2}{3}^-} R_\rho=\lim_{\rho\to 1^-} R_\rho=\frac{2}{\pi}.$$ $R_\rho$ is neither monotone nor continuous. 

In comparison, roundness in the sense of the International Organization for Standardization of the above sets $R^{ISO}_\rho$, given as the ratio of the radius of the inscribed circle to the radius of the circumscribed circle, is given by
$$R^{ISO}_\rho=\frac{d_\rho\sqrt{2}}{\rho}.$$
It is proportional to the lower bound of $R_\rho$ and piecewise decreasing, with monotonicity intervals $\{(\frac{k+1}{k+2},\frac{k+2}{k+3}):k\in\mathbb{N}\}.$

\subsection{Summary}
Section \ref{sec:prelim} contains preliminaries. In Section \ref{sec:1}, we study and determine the set $I_\rho$ of indeces $n\in\mathbb{N}\setminus\left\{0\right\}$ such that $\alpha_n^\rho<\rho$ to eliminate the condition "$\alpha_n^\rho<\rho$", which makes the structure of the set $\Lambda_\rho$ complicated to understand. Then, in Section \ref{sec:2}, we study diagonal points $A^\rho_{m, m}$ that are part of $\Lambda_\rho$ and determine the maximal value of both coordinates for such a point. The point for which this maximum is attained will be called \textit{ the best diagonal point}. In Section \ref{sec:3}, we determine whether the \text{best diagonal point} is an extreme point of the set $\Lambda_\rho$ or not. Finally, in Section \ref{sec:4}, we introduce the notion of \textit{roundness} and obtain lower and upper bounds for roundness of the set $\Lambda_\rho$. The Appendix contains the description of the modification in the original Kwapisz's construction that is needed in order to make the rotation set symmetric about the coordinate axes and the origin, which also results in the first examples of rotation sets with more than two accumulation extreme points. 

\section{Preliminaries}\label{sec:prelim}
\subsection{Rotation theory}
Rotation theory studies the asymptotic behavior of orbits in terms of their rotation vectors, which indicate the average speed and direction of these orbits as they move around their phase space. Its roots go back to celestial mechanics and observation of motions of planets. In 1885 Poincar\'e defined the rotation number of a circle automorphism, in reference to the change of rotational parameters of a planetary orbit, as the asymptotic average displacement of an arbitrary point. His work, complemented by the results of Denjoy, provided a complete classification of dynamics of orientation-preserving circle diffeomorphisms and s characterization of the asymptotic behavior of their orbits in terms of the rotation number. This provided one of the first classification results in dynamical systems.

The transition from the one-dimensional circle to higher dimensions did not occur until the late 1980's, when Misiurewicz and Ziemian \cite{MZ} introduced a systematic approach to the concept of the rotation set of a torus homeomorphism of arbitrary dimension homotopic to the identity (see also \cite{KMG} and \cite{LM}). As one would expect, beyond dimension one there is no longer a single number that, for a given torus homeomorphism, can capture the asymptotic behavior of all orbits of points traveling around the torus, but one has to consider an entire set of vectors that represent the speed and direction of orbits. This led Misiurewicz and Ziemian to define the rotation set as the set of accumulation points of average displacements of (possibly distinct) points on the torus over arbitrary finite parts of their orbits. For the 2-dimensional torus they proved that the rotation set is always compact and convex, which suggests that one could find again a classification of torus homeomorphisms dependent on the rotation set. This, however, turned out to be quite challenging. For the case when the rotation set is 1-dimensional, Franks and Misiurewicz formulated a conjecture about the possible properties of such a set, according to which it must either be a line segment with rational slope that contains infinitely many rational points, or line segment with irrational slope that has one rational endpoint. The conjecture was disproved around 2014 by A. Avila, who constructed an example with irrational slope and no rational endpoint. A lot of progress has been made for the case of rational slope, where the conjecture was confirmed for minimal homeomorphisms \cite{Kocsard},\cite{KPS}. If the rotation set is 2-dimensional even less is known about its geometric features. In the early 1990's Kwapisz showed that any polygon with rational vertices can appear as the rotation set \cite{Kwapisz1}. He also constructed an example of a rotation set which is not a polygon. This set has two extreme points which are partially irrational, and are the limits of rational extreme points \cite{Kwapisz2}. A related example was given by Beguin, Crovisier and Le Roux \cite[Proposition 2.13]{BCL}. In general, the complete picture is still very unclear and many fundamental questions remain open (see e.g. \cite{ABP},\cite{BCL},\cite{BCJL},\cite{JKT},\cite{KT1},\cite{KT2},\cite{LT1},\cite{LT2}, and \cite{L}). Applications of rotation theory are found in the theory of billiards, KAM theory, combinatorial dynamics, complex dynamics, asymptotic homology and chaos and bifurcation theory. In 2016 Boyland, de Carvalho and Hall demonstrated a new approach to study the geometry of rotation sets, through inverse limits of parametric families of 1-dimensional maps \cite{BCH1}. In \cite{BCH2}, aided by methods from symbolic dynamics \cite{BCH3}, they provided the first instance of a systematically studied parametric family of rotation sets of torus homeomorphisms. These sets included non-polygonal rotation sets with an extreme point which is totally irrational and accumulated on both sides by other extreme points, improving on the result of Kwapisz. The paper also contained the first example of a parametric family of torus homeomorphisms with  the Tal-Zanata property, i.e., discontinuity in the sets of extreme points.

\subsection{Case of the 2-dimensional torus $\mathbb{T}^2$}
The notion of \textit{rotation set} in dimension 2, which is pertinent here, was introduced in \cite{MZ}. The rotation set of a lift may be defined in several different ways.

\begin{definition}
Let $f:\mathbb{T}^2\rightarrow\mathbb{T}^2$ be a homeomorphism homotopic to the identity, and $F:\mathbb{R}^2\rightarrow\mathbb{R}^2$ a lift of $f$. For any $x\in\mathbb{T}^2$, let $\tilde{x}$ denote a lift of $x$. Define $\varphi:\mathbb{T}^2\rightarrow\mathbb{R}^2$ by $\varphi(x) = F\left(\tilde{x}\right)-\tilde{x}$ for any $x\in\mathbb{T}^2$. Then let

$$\begin{aligned}\rho_{\text{MZ}}\left(F\right) &= \bigcap_{m>0}\text{cl}\left(\bigcup_{n>m}\left\{\tfrac{1}{n}\left(F^n\left(\tilde{x}\right)-\tilde{x}\right):\tilde{x}\in\mathbb{R}^2\right\}\right) \\
\rho_{\text{p}}\left(F\right) &= \left\{\underset{n\to\infty}\lim\tfrac{1}{n}\left(F^n\left(\tilde{x}\right)-\tilde{x}\right):x\in\mathbb{T}^2, \text{ the limit exists}\right\} \\
\rho_{\text{m}}\left(F\right) &= \left\{\int_{\mathbb{T}^2} \varphi\,\text{d}\mu:\mu \,f\text{-invariant probability measure on }\mathbb{T}^2\right\}
\end{aligned}$$

\noindent denote respectively the \textit{Misiurewicz-Ziemian rotation set of $F$}, the \textit{pointwise rotation set of $F$} and the \textit{measure rotation set of $F$}. 
\end{definition}

Misiurewicz and Ziemian \cite{MZ} showed that these sets have the same convex hulls.

\begin{theorem}[Misiurewicz-Ziemian, 1989]
Let $f:\mathbb{T}^2\rightarrow\mathbb{T}^2$ be a homeomorphism homotopic to the identity, and $F:\mathbb{R}^2\rightarrow\mathbb{R}^2$ a lift of $f$. Then 

$$\rho_{\text{MZ}}\left(F\right) = \text{Conv}\,\rho_{\text{p}}\left(F\right) = \text{Conv}\rho_{\text{m}}\left(F\right)$$
\end{theorem}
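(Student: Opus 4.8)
The plan is to establish the two-sided chain of inclusions $\rho_{\text{p}}(F)\subseteq\rho_{\text{MZ}}(F)\subseteq\rho_{\text{m}}(F)\subseteq\text{Conv}\,\rho_{\text{p}}(F)$, after first recording that $\rho_{\text{m}}(F)$ is compact and convex; since a convex set equals its own convex hull, these inclusions force all of $\rho_{\text{MZ}}(F)$, $\text{Conv}\,\rho_{\text{p}}(F)$, $\text{Conv}\,\rho_{\text{m}}(F)$ to coincide. The preliminary observations are that $\varphi$ genuinely descends to a continuous map on $\mathbb{T}^2$ — if $\tilde x$ and $\tilde x+v$ with $v\in\mathbb{Z}^2$ are two lifts of $x$, then $F(\tilde x+v)=F(\tilde x)+v$ because $F$ lifts a map homotopic to the identity, so $\varphi(x)=F(\tilde x)-\tilde x$ does not depend on the chosen lift and varies continuously with $x$ — and that $\varphi$ is bounded, $\mathbb{T}^2$ being compact. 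The bookkeeping identity driving everything is the telescoping sum $F^n(\tilde x)-\tilde x=\sum_{k=0}^{n-1}\varphi(f^k x)$, gotten by applying $F(\tilde y)-\tilde y=\varphi(y)$ in turn to the lifts $F^k(\tilde x)$ of $f^k(x)$; equivalently $\tfrac1n\bigl(F^n(\tilde x)-\tilde x\bigr)=\int_{\mathbb{T}^2}\varphi\,d\mu_{x,n}$ with $\mu_{x,n}=\tfrac1n\sum_{k=0}^{n-1}\delta_{f^k(x)}$ the $n$-th empirical measure of $x$. Since the set $\mathcal{M}_f$ of $f$-invariant Borel probability measures on $\mathbb{T}^2$ is nonempty, convex and weak-$*$ compact, and the map $L:\mu\mapsto\int\varphi\,d\mu$ is affine and weak-$*$ continuous, the set $\rho_{\text{m}}(F)=L(\mathcal{M}_f)$ is compact and convex.

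The inclusion $\rho_{\text{p}}(F)\subseteq\rho_{\text{MZ}}(F)$ is immediate: if $v=\lim_n\tfrac1n(F^n(\tilde x)-\tilde x)$, then $v$ lies in the closure of each tail $\{\tfrac1n(F^n(\tilde x)-\tilde x):n>m\}$, hence in the intersection defining $\rho_{\text{MZ}}(F)$ (which is in particular nonempty). The main engine is $\rho_{\text{MZ}}(F)\subseteq\rho_{\text{m}}(F)$. Given $v\in\rho_{\text{MZ}}(F)$, the sets $A_m=\bigcup_{n>m}\{\tfrac1n(F^n(\tilde x)-\tilde x):\tilde x\in\mathbb{R}^2\}$ decrease in $m$, so one can choose $w_j\in A_{j}$ with $\|w_j-v\|<1/j$, say $w_j=\tfrac1{n_j}(F^{n_j}(\tilde x_j)-\tilde x_j)$ with $n_j>j$; thus $w_j=\int\varphi\,d\mu_{x_j,n_j}$, $n_j\to\infty$, $w_j\to v$. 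By weak-$*$ compactness of the space of all Borel probability measures on $\mathbb{T}^2$, a subsequence of $(\mu_{x_j,n_j})$ converges weak-$*$ to some $\mu$; the estimate $\bigl|\int g\circ f\,d\mu_{x_j,n_j}-\int g\,d\mu_{x_j,n_j}\bigr|\le 2\|g\|_\infty/n_j\to0$ for continuous $g$ shows $\mu\in\mathcal{M}_f$, and continuity of $\varphi$ gives $\int\varphi\,d\mu=\lim w_{j}=v$, so $v\in\rho_{\text{m}}(F)$.

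It remains to prove $\rho_{\text{m}}(F)\subseteq\text{Conv}\,\rho_{\text{p}}(F)$ — with a genuine finite convex hull, not a closure. First, for an ergodic $\nu\in\mathcal{M}_f$, Birkhoff's ergodic theorem applied to the two coordinates of $\varphi$ yields a point $x$ in a set of full $\nu$-measure with $\tfrac1n\sum_{k=0}^{n-1}\varphi(f^k x)\to\int\varphi\,d\nu$, i.e.\ $\tfrac1n(F^n(\tilde x)-\tilde x)\to\int\varphi\,d\nu$, so $L(\nu)\in\rho_{\text{p}}(F)$. Next, if $v$ is an extreme point of the compact convex set $\rho_{\text{m}}(F)=L(\mathcal{M}_f)$, then $L^{-1}(v)\cap\mathcal{M}_f$ is a nonempty compact convex face of $\mathcal{M}_f$, hence contains an extreme point of $\mathcal{M}_f$, which is an ergodic measure $\nu$, so $v=L(\nu)\in\rho_{\text{p}}(F)$ by the previous step. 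Thus $\operatorname{ext}\rho_{\text{m}}(F)\subseteq\rho_{\text{p}}(F)$, and by Minkowski's theorem (the finite-dimensional Krein--Milman, using $\rho_{\text{m}}(F)\subseteq\mathbb{R}^2$) every point of $\rho_{\text{m}}(F)$ is a convex combination of at most three of its extreme points, whence $\rho_{\text{m}}(F)=\text{Conv}\bigl(\operatorname{ext}\rho_{\text{m}}(F)\bigr)\subseteq\text{Conv}\,\rho_{\text{p}}(F)$. Chaining the four inclusions gives $\text{Conv}\,\rho_{\text{p}}(F)\subseteq\rho_{\text{MZ}}(F)\subseteq\rho_{\text{m}}(F)\subseteq\text{Conv}\,\rho_{\text{p}}(F)$, so all three coincide; as $\rho_{\text{m}}(F)$ is convex, this is exactly $\rho_{\text{MZ}}(F)=\text{Conv}\,\rho_{\text{p}}(F)=\text{Conv}\,\rho_{\text{m}}(F)$.

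The step I expect to be the main obstacle is the last paragraph: realizing $\rho_{\text{m}}(F)$ \emph{exactly} (no closure) as $\text{Conv}\,\rho_{\text{p}}(F)$, given that $\rho_{\text{p}}(F)$ may be neither convex, closed, nor connected. The mechanism that makes it go through is the interplay of (i) the face argument identifying each extreme point of $\rho_{\text{m}}(F)$ with the image of an ergodic measure, (ii) Birkhoff's theorem placing those images in $\rho_{\text{p}}(F)$, and (iii) Minkowski's theorem, which in the plane upgrades a barycentre of extreme points to an honest finite convex combination. The inclusion $\rho_{\text{MZ}}(F)\subseteq\rho_{\text{m}}(F)$ via empirical measures and weak-$*$ compactness is the other essential ingredient, but is more routine.
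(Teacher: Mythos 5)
The paper itself offers no proof of this statement: it is quoted from Misiurewicz and Ziemian's 1989 paper, so your proposal has to stand on its own, and as written it has one genuine gap. Everything you do for the \emph{second} equality is correct and standard: the telescoping identity $F^n(\tilde x)-\tilde x=\sum_{k=0}^{n-1}\varphi(f^kx)$, the empirical-measure/weak-$*$ compactness argument giving $\rho_{\text{MZ}}(F)\subseteq\rho_{\text{m}}(F)$, and the face/Krein--Milman/Birkhoff/Minkowski argument giving $\rho_{\text{m}}(F)\subseteq\operatorname{Conv}\rho_{\text{p}}(F)$ are all sound, and together with $\rho_{\text{p}}(F)\subseteq\rho_{\text{MZ}}(F)$ they yield $\operatorname{Conv}\rho_{\text{p}}(F)=\rho_{\text{m}}(F)=\operatorname{Conv}\rho_{\text{m}}(F)$ and $\operatorname{Conv}\rho_{\text{MZ}}(F)=\operatorname{Conv}\rho_{\text{p}}(F)$.

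The gap is in your final chaining: you invoke the inclusion $\operatorname{Conv}\rho_{\text{p}}(F)\subseteq\rho_{\text{MZ}}(F)$, but what you actually proved is only $\rho_{\text{p}}(F)\subseteq\rho_{\text{MZ}}(F)$; taking the convex hull on the left is legitimate only if $\rho_{\text{MZ}}(F)$ is itself convex. That convexity is not a formality --- it is the central, hardest part of Misiurewicz and Ziemian's theorem for $\mathbb{T}^2$, proved by a genuinely two-dimensional (planar) argument, and it is exactly what separates the displayed statement, with bare $\rho_{\text{MZ}}(F)$ on the left, from the ``same convex hulls'' statement that your argument does establish and which holds in every dimension. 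Indeed, for homeomorphisms of $\mathbb{T}^d$ with $d\geqslant 3$ the Misiurewicz--Ziemian rotation set need not be convex, so no soft argument like yours, which never uses dimension two, can close this gap. To finish you must either prove convexity of $\rho_{\text{MZ}}(F)$ directly, or equivalently (given the inclusions you already have) prove $\rho_{\text{m}}(F)\subseteq\rho_{\text{MZ}}(F)$; either way the missing ingredient is the two-dimensional realization argument of Misiurewicz--Ziemian, not anything in the measure-theoretic machinery you set up.
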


Therefore, it is convenient to define the rotation set of a map of the torus in the following way. 

\begin{definition}
Let $f:\mathbb{T}^2\rightarrow\mathbb{T}^2$ be a be a homeomorphism homotopic to the identity, and $F:\mathbb{R}^2\rightarrow\mathbb{R}^2$ a lift of $f$. Define the \textit{rotation set of map $f$} as 

$$\rho(f)=\rho_{\text{MZ}}\left(F\right)+\mathbb{Z}^2$$
\end{definition}

A natural question is then which compact convex subsets of the plane can be realized as a rotation set of a homeomorphism of the torus. The aforementioned question on the round disk is a particularly interesting special case.

\subsection{Notation}
Let us introduce our notation. Let $\rho\in(0, 1)\setminus\mathbb{Q}$.

\begin{definition}\label{def:alpha_n}
For any $m\in\mathbb{N}$, let 

$$\begin{aligned}
\alpha^\rho_m &:= \lceil m\rho\rceil - m\rho \\
&\phantom{:}= 1-D(m\rho)
\end{aligned}$$

where $D$ is the decimal part.

\end{definition}

Note that the sequence $\left(\alpha_m^\rho\right)_{m\in\mathbb{N}}$ is dense in $[0, 1]$ and all its terms are distinct.

\begin{definition}\label{def:Lambda_rho}
For any $m, n\in\mathbb{N}$, denote

$$A^\rho_{m, n} = (x_{m, n}^\rho, y_{m, n}^\rho) := \left(\tfrac{\lceil m\rho\rceil}{m+n+1}, \tfrac{\lceil n\rho\rceil}{m+n+1}\right)$$

Then denote by $\Lambda_\rho$ the convex hull \footnote{meaning the intersection of all convex sets containing \\ $\left\{A_{m, n}^\rho\,|\,m, n\in\mathbb{N}, \alpha_m<\rho, \alpha_n<\rho\right\}\cup\left\{(0, \rho), (\rho, 0)\right\}$} of

$$\left\{A_{m, n}^\rho\,|\,m, n\in\mathbb{N}, \alpha_m^\rho<\rho, \alpha_n^\rho<\rho\right\}\cup\left\{(0, \rho), (\rho, 0)\right\},$$

which is a compact and convex subset of $\mathbb{R}^2$.  
\end{definition}

Note that since $(0, 0), (0, \rho), (\rho, 0)\in\Lambda_\rho$,  the set $\Lambda_\rho$ can also be seen as the convex hull of 

$$L_\rho:=\left\{A_{m, n}^\rho\,|\,m, n\in\mathbb{N}\setminus\left\{0\right\}, \alpha_m^\rho<\rho, \alpha_n^\rho<\rho\right\}\cup\left\{(0, 0), (0, \rho), (\rho, 0)\right\}$$

\noindent because points $A_{m, n}^\rho$ (satisfying $\alpha_m^\rho<\rho, \alpha_n^\rho<\rho$) with $m=0$ or $n=0$ are (non-trivial) convex combinations of $(0, 0), (0, \rho), (\rho, 0)$. Furthermore, note that the extreme points of $\Lambda_\rho$ are elements of $L_\rho$. Finally, for any $m, n\in\mathbb{N}$, we have 

$$A_{m, n}^\rho\in \Lambda_\rho \iff A_{n, m}^\rho\in \Lambda_\rho$$

\noindent and hence, the set $\Lambda_\rho$ is symmetric with respect to the diagonal.

\begin{definition}\label{def:I_rho}
We introduce 

$$I_\rho:=\left\{m\in\mathbb{N}\setminus\left\{0\right\}\,|\,\alpha_m^\rho<\rho\right\}$$

so that 

$$L_\rho = \left\{A_{m, n}^\rho\,|\,m, n\in I_\rho\right\}\cup\left\{(0, 0), (0, \rho), (\rho, 0)\right\}$$
\end{definition}

Note the following symmetry property 

\begin{proposition}\label{prop:symmetry} 
For all $m, n\in\mathbb{N}\setminus\left\{0\right\}$, one has 

$$\begin{aligned}
\alpha_{m}^{1-\rho} &= 1-\alpha_{m}^\rho \\
A^{1-\rho}_{m, n} &= -A^\rho_{m, n}+\left(\tfrac{m+1}{m+n+1}, \tfrac{n+1}{m+n+1}\right) \\ 
\end{aligned}$$
\end{proposition}

\begin{proof}
Recall that 

$$\forall x\in\mathbb{R}, \forall k\in\mathbb{Z}, \left\{\begin{aligned}\lceil x+k\rceil &= \lceil x\rceil+k \\ \lceil-x\rceil &= -\lceil x\rceil+1 \text{ if }x\notin\mathbb{Z}\end{aligned}\right.$$

Using these two properties and the fact that for any $m\in\mathbb{N}\setminus\left\{0\right\}, m\rho\notin\mathbb{Z}$, a simple computation leads to the result.
\end{proof}

\section{Study of $I_\rho$}\label{sec:1}

Since extreme points of the set $\Lambda_\rho$ are part of $L_\rho$, it is relevant to study $L_\rho$. In this section, we determine the set $I_\rho$ in order to eliminate the condition $\alpha_n^\rho<\rho$ in the definition of set $L_\rho$. In this way, we obtain an explicit description of $L_\rho$.

\subsection{Case $\rho>\frac{1}{2}$}

In this subsection, we assume that $\rho>\tfrac{1}{2}$.

\begin{definition}\label{def:partitions of I_rho and N*, >0.5}
Let $k\in\mathbb{N}$, and denote 

$$I_{\rho, k}=\left\{n\in I_\rho\,|\,\lceil n\rho\rceil = n-k\right\} \text{ and }J_{\rho, k}=\left\{n\in \mathbb{N}\setminus\left\{0\right\}\,|\,\lceil n\rho\rceil = n-k\right\}$$
\end{definition}

Note that by definition, the sets $(I_{\rho, k})_{k\in\mathbb{N}}$ establish a partition of $I_\rho$, and the sets $(J_{\rho, k})_{k\in\mathbb{N}}$ establish a partition of $\mathbb{N}\setminus\left\{0\right\}$. Also, for any $k\in\mathbb{N}$, $I_{\rho, k}\subset J_{\rho, k}$. 

\begin{definition}\label{def:maxima partition, >0.5}
For any $k\in\mathbb{N}\cup\left\{-1\right\}$, we denote 

$$N_{\rho, k}:=\left\lfloor\tfrac{k+1}{1-\rho}\right\rfloor-1$$
\end{definition}

\begin{proposition}\label{prop:partition, >0.5}
The following defines a partition of $I_\rho$

$$\begin{aligned}I_\rho &= \bigcup_{k=0}^\infty I_{\rho, k} \text{ with } I_{\rho, k}=\left\llbracket N_{\rho, k-1}+2, N_{\rho, k}\right\rrbracket \\
&= \mathbb{N}\setminus\left\{\left\lfloor\tfrac{k}{1-\rho}\right\rfloor\,|\,k\in\mathbb{N}\right\} \\
\end{aligned}$$

such that for any $k, l\in\mathbb{N}$ and $m\in I_{\rho, k}, l\in I_{\rho, l}$, we have

$$\alpha_m^\rho = m-k-m\rho \text{ and }\alpha_n^\rho = n-l-n\rho$$

and therefore

$$x_{m, n}^\rho = \tfrac{m-k}{m+n+1} \text{ and }y_{m, n}^\rho = \tfrac{n-l}{m+n+1}$$

The sequence $\left(\alpha_n^\rho\right)_{n\in\mathbb{N}\setminus\left\{0\right\}}$ is strictly increasing on each interval $I_{\rho, k}$.

\end{proposition}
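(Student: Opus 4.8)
The plan is to prove Proposition~\ref{prop:partition, >0.5} by carefully analyzing when $\alpha_n^\rho<\rho$ and by tracking how $\lceil n\rho\rceil$ behaves as $n$ increases. The starting observation is that for $\rho>\tfrac12$, writing $\lceil n\rho\rceil = n-k$ means $n-1-k < n\rho \leqslant n-k$, i.e. $\tfrac{k}{1-\rho} \leqslant n < \tfrac{k+1}{1-\rho}$ (the endpoints never being integers since $\rho$ is irrational). This immediately identifies $J_{\rho,k}$ as the integer interval $\llbracket \lceil \tfrac{k}{1-\rho}\rceil, \lfloor\tfrac{k+1}{1-\rho}\rfloor\rrbracket = \llbracket N_{\rho,k-1}+2, N_{\rho,k}+1 \rrbracket$ using Definition~\ref{def:maxima partition, >0.5}, and in particular the $J_{\rho,k}$ partition $\mathbb{N}\setminus\{0\}$ into consecutive blocks. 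On any such block, $\alpha_n^\rho = \lceil n\rho\rceil - n\rho = (n-k) - n\rho = n(1-\rho)-k$, which is visibly strictly increasing in $n$, so on each $J_{\rho,k}$ the sequence $(\alpha_n^\rho)$ climbs monotonically from a value just above $0$ to a value just below $1-\rho+\,(\text{something})$.

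The next step is to decide, within each block $J_{\rho,k}$, which $n$ satisfy $\alpha_n^\rho<\rho$. Since $\alpha_n^\rho = n(1-\rho)-k$ is increasing on the block, the set $\{n\in J_{\rho,k} : \alpha_n^\rho<\rho\}$ is an initial segment of the block; we need $n(1-\rho)-k<\rho$, i.e. $n < \tfrac{k+\rho}{1-\rho} = \tfrac{k+1}{1-\rho}-1$. The claim to verify is that the unique element of $J_{\rho,k}$ that fails this is precisely its largest element $\lfloor\tfrac{k+1}{1-\rho}\rfloor$: indeed the largest element equals $N_{\rho,k}+1 = \lfloor\tfrac{k+1}{1-\rho}\rfloor$, and one checks $\lfloor\tfrac{k+1}{1-\rho}\rfloor \geqslant \tfrac{k+1}{1-\rho}-1$ always (true for any real), while the second-largest element $\lfloor\tfrac{k+1}{1-\rho}\rfloor - 1 < \tfrac{k+1}{1-\rho}-1$ holds because $\lfloor x\rfloor - 1 < x - 1$. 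Hence $I_{\rho,k} = J_{\rho,k}\setminus\{\max J_{\rho,k}\} = \llbracket N_{\rho,k-1}+2, N_{\rho,k}\rrbracket$, which is exactly the asserted description, and removing the top element of each block $J_{\rho,k}$ amounts to deleting precisely the integers $\lfloor\tfrac{k+1}{1-\rho}\rfloor = \lfloor\tfrac{k'}{1-\rho}\rfloor$ as $k' = k+1$ ranges over $\mathbb{N}\setminus\{0\}$; noting $\lfloor\tfrac{0}{1-\rho}\rfloor = 0\notin\mathbb{N}\setminus\{0\}$ anyway, this gives the second displayed formula $I_\rho = \mathbb{N}\setminus\{\lfloor\tfrac{k}{1-\rho}\rfloor : k\in\mathbb{N}\}$.

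With $k,l$ fixed and $m\in I_{\rho,k}$, $n\in I_{\rho,l}$ (I note the statement's ``$l\in I_{\rho,l}$'' is a typo for $n\in I_{\rho,l}$), the formulas $\alpha_m^\rho = m-k-m\rho$ and $\alpha_n^\rho = n-l-n\rho$ are then immediate from $\lceil m\rho\rceil = m-k$, and substituting into Definition~\ref{def:Lambda_rho} yields $x_{m,n}^\rho = \tfrac{m-k}{m+n+1}$, $y_{m,n}^\rho = \tfrac{n-l}{m+n+1}$. The strict monotonicity of $(\alpha_n^\rho)$ on each $I_{\rho,k}$ follows a fortiori since $I_{\rho,k}\subset J_{\rho,k}$ and we already have monotonicity on all of $J_{\rho,k}$. \textbf{The main obstacle} — really the only delicate point — is the bookkeeping at the block boundaries: one must be scrupulous that the endpoints $\tfrac{k}{1-\rho}$ are genuinely non-integral (using irrationality of $\rho$, hence of $\tfrac{1}{1-\rho}$), so that the floor/ceiling manipulations are clean and the blocks $J_{\rho,k}$ are nonempty and correctly aligned; a secondary check is that consecutive blocks abut with no gap, i.e. $\lceil\tfrac{k+1}{1-\rho}\rceil = \lfloor\tfrac{k+1}{1-\rho}\rfloor + 1 = N_{\rho,k}+2$, which is exactly the stated lower endpoint of $I_{\rho,k+1}$.
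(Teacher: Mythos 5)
Your proof is correct and follows essentially the same route as the paper: identify $J_{\rho,k}$ as the integers in $\left(\tfrac{k}{1-\rho},\tfrac{k+1}{1-\rho}\right)$, use $\alpha_n^\rho=n(1-\rho)-k$ on that block, and check that exactly the top element violates $\alpha_n^\rho<\rho$. Your use of the monotonicity of $\alpha_n^\rho$ on $J_{\rho,k}$ to see that $I_{\rho,k}$ is an initial segment is a mild streamlining of the paper's midpoint-convexity argument plus endpoint evaluations, but not a different method.
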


Note that since $\rho>\tfrac{1}{2}$, for any $k\in\mathbb{N}$, we have $\text{Card}\left(J_{\rho, k}\right)\geqslant2$. 

\begin{figure}[!h]
\begin{center}
\includegraphics[width=11cm]{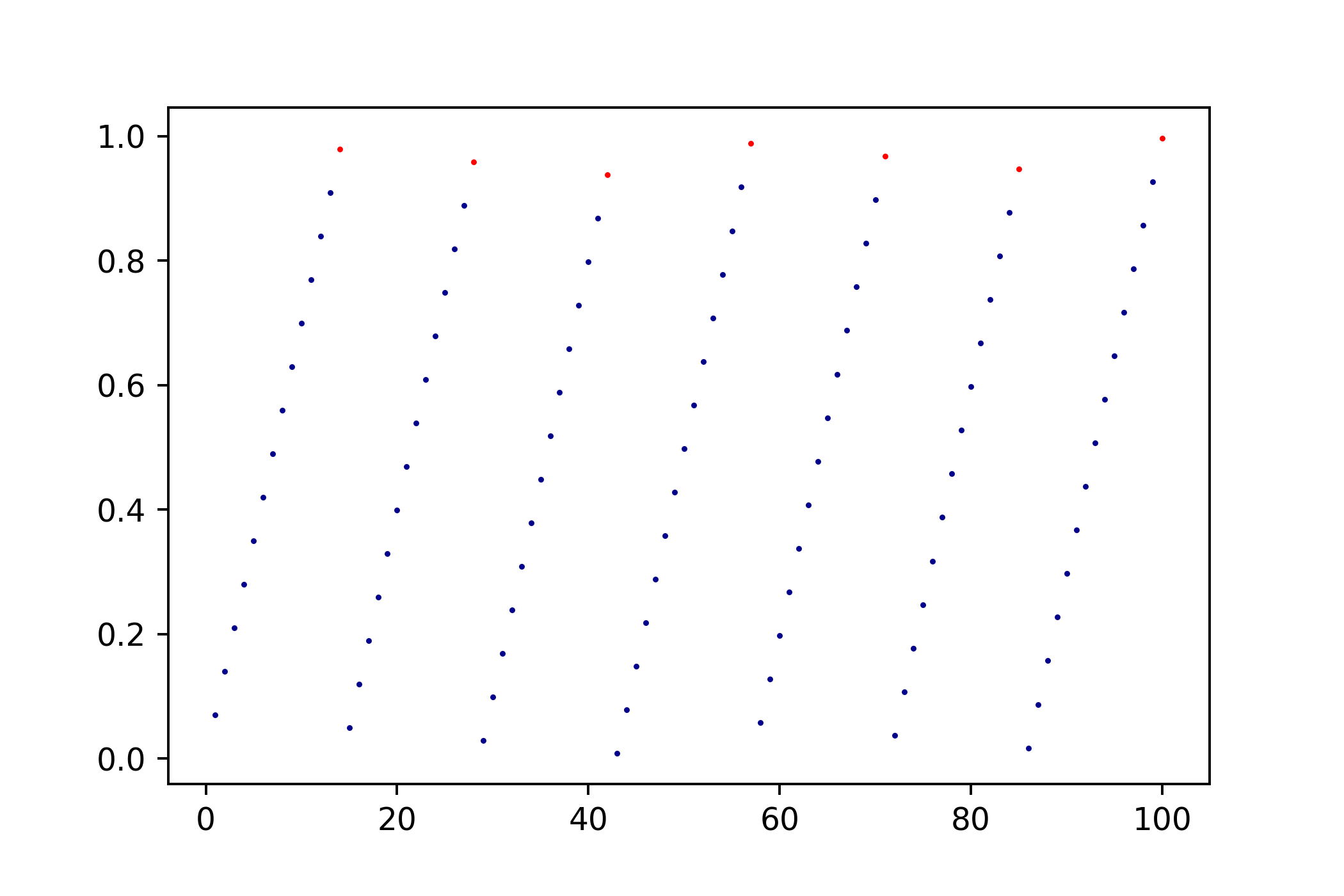}
\end{center}
    \vskip-1cm\caption{First 100 terms of sequence $(\alpha_{n}^\rho)_{n \in \mathbb{N}\setminus\left\{0\right\}}$ for $\rho=0.93+\pi.10^{-5}$. Red points are such that $n\notin I_\rho$ and blue ones are such that $n\in I_\rho$.}
\end{figure}

\begin{proof}
Let $k\in\mathbb{N}$.

\begin{itemize}

\item First, note that since $\rho>\tfrac{1}{2}$, we have $N_{\rho, k}=\left\lfloor\tfrac{k+1}{1-\rho}\right\rfloor-1\in\mathbb{N}\setminus\left\{0\right\}$. 

\item Second, for any $n\in\mathbb{N}\setminus\left\{0\right\}$,

$$\begin{aligned}
&n\in J_{\rho, k} \\
\iff&\lceil n\rho\rceil = n-k \\
\iff&n\rho\in\left(n-k-1, n-k\right) \text{ since }n\rho\notin\mathbb{Z} \\
\iff&n\in\left(\tfrac{k}{1-\rho}, \tfrac{k+1}{1-\rho}\right) \\
\iff&n\in\left(\left\lfloor\tfrac{k}{1-\rho}\right\rfloor+1, \left\lfloor\tfrac{k+1}{1-\rho}\right\rfloor-1\right) \text{ since }\left\lceil\tfrac{k}{1-\rho}\right\rceil=\left\lfloor\tfrac{k}{1-\rho}\right\rfloor+1
\end{aligned}$$

\item Let $n\in\mathbb{N}\setminus\left\{0, 1\right\}$. Suppose that $\alpha_{n-1}^\rho<\rho$ and $\alpha_{n+1}^\rho<\rho$. Let's show that $\alpha_n^\rho<\rho$. Indeed, by assumption, we have 

$$\begin{aligned}
\alpha_{n-1}^\rho &= n-1-k-(n-1)\rho<\rho \\
\alpha_{n+1}^\rho &= n+1-k-(n+1)\rho<\rho
\end{aligned}$$

Thus, by taking the mean of those two lines, one obtains 

$$\alpha_n^\rho=n-k-n\rho<\rho$$

This shows that $I_{\rho, k}$ is an interval, since $J_{\rho, k}$ is an interval (according to the previous point). Therefore, we just need to prove

$$\begin{aligned}
\max I_{\rho, k}&= \left\lfloor\tfrac{k+1}{1-\rho}\right\rfloor-1 \\
\min I_{\rho, k}&= \left\lfloor\tfrac{k}{1-\rho}\right\rfloor+1
\end{aligned}$$ 

\item We have

$$\begin{aligned}
\alpha^\rho_{\left\lfloor\tfrac{k+1}{1-\rho}\right\rfloor-1}&=\left\lfloor\tfrac{k+1}{1-\rho}\right\rfloor-1-k-\left(\left\lfloor\tfrac{k+1}{1-\rho}\right\rfloor-1\right)\rho \text{ since }\left\lfloor\tfrac{k+1}{1-\rho}\right\rfloor-1\in J_{\rho, k} \\
&=\left(\left\lfloor\tfrac{k+1}{1-\rho}\right\rfloor-1\right)(1-\rho)-k \\
&<\left(\tfrac{k+1}{1-\rho}-1\right)(1-\rho)-k \text{ since }\tfrac{k+1}{1-\rho}\notin\mathbb{Z} \\
&=\rho \\
\end{aligned}$$

$$\begin{aligned}
\alpha^\rho_{\left\lfloor\tfrac{k+1}{1-\rho}\right\rfloor}&=\left\lfloor\tfrac{k+1}{1-\rho}\right\rfloor-k-\left\lfloor\tfrac{k+1}{1-\rho}\right\rfloor\rho \text{ since }\left\lfloor\tfrac{k+1}{1-\rho}\right\rfloor\in J_{\rho, k}\phantom{....................} \\
&=\left\lfloor\tfrac{k+1}{1-\rho}\right\rfloor(1-\rho)-k \\
&>\left(\tfrac{k+1}{1-\rho}-1\right)(1-\rho)-k \\
&=\rho \\
\end{aligned}$$

$$\begin{aligned}
\alpha_{\left\lfloor\tfrac{k}{1-\rho}\right\rfloor+1}^\rho &= \left\lfloor\tfrac{k}{1-\rho}\right\rfloor+1-k-\left(\left\lfloor\tfrac{k}{1-\rho}\right\rfloor+1\right)\rho \text{ since }\left\lfloor\tfrac{k}{1-\rho}\right\rfloor+1\in J_{\rho, k} \\
&= \left(\left\lfloor\tfrac{k}{1-\rho}\right\rfloor+1\right)(1-\rho)-k \\
&< \left(\tfrac{k}{1-\rho}+1\right)(1-\rho)-k \\
&=1-\rho \\
&<\rho \text{ since }\rho>\tfrac{1}{2}
\end{aligned}$$

\item In other words, we proved that 

$$\left\lfloor\tfrac{k+1}{1-\rho}\right\rfloor-1, \left\lfloor\tfrac{k}{1-\rho}\right\rfloor+1\in I_{\rho, k} \text{ and }\left\lfloor\tfrac{k+1}{1-\rho}\right\rfloor\notin I_{\rho, k}$$

Consequently,  

$$\begin{aligned}
\max I_{\rho, k}&=\max J_{\rho, k}-1 = \left\lfloor\tfrac{k+1}{1-\rho}\right\rfloor-1 \\
\min I_{\rho, k}&=\min J_{\rho, k} = \left\lfloor\tfrac{k}{1-\rho}\right\rfloor+1 \text{ since }I_{\rho, k}\subset J_{\rho, k}
\end{aligned}$$ 

\item The sequence $(\alpha_{n}^\rho)_{n\in \mathbb{N}\setminus\left\{0\right\}}$ is strictly increasing on the interval $J_{\rho, k}$. Indeed, for any $n\in J_{\rho, k}$ such that $n+1\in J_{\rho, k}$, one has 

$$\begin{aligned}
\alpha_{n+1}^\rho-\alpha_n^\rho &= \lceil(n+1)\rho\rceil-(n+1)\rho-\lceil n\rho\rceil+n\rho \\
&=n+1-k-(n+1)\rho-(n-k)+n\rho \\
&=1-\rho \\
&>0
\end{aligned}$$
\end{itemize}
\end{proof}

\subsection{Case $\rho<\frac{1}{2}$}

In this subsection, we assume that $\rho<\tfrac{1}{2}$. The partition $(I_{\rho, k})_{k\in\mathbb{N}}$ of $I_\rho$ introduced in Definition \ref{def:partitions of I_rho and N*, >0.5} is not appropriate in this case, because some intervals $I_{\rho, k}$ are empty. Therefore, we introduce another, more relevant partition.  

\begin{definition}\label{def:partitions of I_rho and N*, <0.5}
Let $k\in\mathbb{N}\setminus\left\{0\right\}$, and denote 

$$I_{\rho, k}'=\left\{n\in I_\rho\,|\,\lceil n\rho\rceil = k\right\} \text{ and }J_{\rho, k}'=\left\{n\in \mathbb{N}\setminus\left\{0\right\}\,|\,\lceil n\rho\rceil = k\right\}$$
\end{definition}

Note that by definition, the sets $\left(I_{\rho, k}'\right)_{k\in\mathbb{N}\setminus\left\{0\right\}}$ establish a partition of $I_\rho$ and $\left(J_{\rho, k}'\right)_{k\in\mathbb{N}\setminus\left\{0\right\}}$a partition of $\mathbb{N}\setminus\left\{0\right\}$. Also, for any $k\in\mathbb{N}\setminus\left\{0\right\}$, $I_{\rho, k}'\subset J_{\rho, k}'$.

\begin{lemma}\label{lem:explicit expression for partitions, <0.5}
For any $k\in\mathbb{N}\setminus\left\{0\right\}$, one has 

$$\begin{aligned}
I_{\rho, k}'&=\left\{\left\lfloor\tfrac{k}{\rho}\right\rfloor\right\} \\
J_{\rho, k}'&=\left\llbracket\left\lfloor\tfrac{k-1}{\rho}\right\rfloor+1 , \left\lfloor\tfrac{k}{\rho}\right\rfloor\right\rrbracket
\end{aligned}$$
\end{lemma}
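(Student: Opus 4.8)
The plan is to compute, for fixed $k\in\mathbb{N}\setminus\{0\}$, exactly which positive integers $n$ satisfy $\lceil n\rho\rceil = k$, and then intersect that set with $I_\rho$. First I would handle $J_{\rho,k}'$. By definition $\lceil n\rho\rceil = k$ is equivalent to $k-1 < n\rho \leqslant k$, and since $\rho$ is irrational and $n\geqslant 1$ we never have $n\rho\in\mathbb{Z}$, so this becomes $k-1 < n\rho < k$, i.e. $\tfrac{k-1}{\rho} < n < \tfrac{k}{\rho}$. Because $\tfrac{k-1}{\rho}$ and $\tfrac{k}{\rho}$ are both irrational, the integers strictly between them are exactly $\left\lfloor\tfrac{k-1}{\rho}\right\rfloor+1,\dots,\left\lfloor\tfrac{k}{\rho}\right\rfloor$, which gives $J_{\rho,k}'=\left\llbracket\left\lfloor\tfrac{k-1}{\rho}\right\rfloor+1,\left\lfloor\tfrac{k}{\rho}\right\rfloor\right\rrbracket$. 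One should check this interval is nonempty: since $\rho<\tfrac12$, the length $\tfrac{k}{\rho}-\tfrac{k-1}{\rho}=\tfrac1\rho>2$, so it contains at least one integer (indeed at least two).

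Next I would compute $I_{\rho,k}'$. Since $I_{\rho,k}'=J_{\rho,k}'\cap I_\rho$, and $I_\rho=\{n : \alpha_n^\rho<\rho\}$, I need to determine for which $n\in J_{\rho,k}'$ we have $\alpha_n^\rho = k - n\rho < \rho$, i.e. $n\rho > k-\rho$, i.e. $n > \tfrac{k}{\rho}-1$. Combined with the upper bound $n < \tfrac{k}{\rho}$, this forces $n\in\left(\tfrac{k}{\rho}-1,\tfrac{k}{\rho}\right)$, an open interval of length $1$ with irrational endpoints, hence containing exactly one integer, namely $\left\lfloor\tfrac{k}{\rho}\right\rfloor$ (which equals $\left\lceil\tfrac{k}{\rho}\right\rceil-1$ since $\tfrac{k}{\rho}\notin\mathbb{Z}$). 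So $I_{\rho,k}'=\left\{\left\lfloor\tfrac{k}{\rho}\right\rfloor\right\}$. I should also confirm $\left\lfloor\tfrac{k}{\rho}\right\rfloor$ indeed lies in $J_{\rho,k}'$, i.e. that $\left\lfloor\tfrac{k}{\rho}\right\rfloor\geqslant\left\lfloor\tfrac{k-1}{\rho}\right\rfloor+1$; this is immediate from $\tfrac{k}{\rho}-\tfrac{k-1}{\rho}=\tfrac1\rho>1$.

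I do not anticipate a serious obstacle here: the whole argument is the elementary "count lattice points in an open interval with irrational endpoints" computation, done twice with slightly different intervals. The one point requiring a little care is the interplay between the ceiling condition and the $\alpha_n^\rho<\rho$ condition — specifically noticing that the binding constraint for membership in $I_\rho$ is the lower bound $n > \tfrac{k}{\rho}-1$, which isolates the single largest element of $J_{\rho,k}'$. It is also worth remarking (for use in later sections) that within $J_{\rho,k}'$ the sequence $(\alpha_n^\rho)_n$ is strictly decreasing in $n$, since $\alpha_{n+1}^\rho - \alpha_n^\rho = -\rho < 0$ when both $n,n+1\in J_{\rho,k}'$; this explains structurally why it is precisely the top of each block $J_{\rho,k}'$ that survives in $I_\rho$, mirroring the $\rho>\tfrac12$ case but with the roles reversed.
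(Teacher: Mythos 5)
Your proof is correct, but it takes a genuinely different route from the paper's. You compute everything directly: you unwind $\lceil n\rho\rceil=k$ into $\tfrac{k-1}{\rho}<n<\tfrac{k}{\rho}$ to get $J_{\rho,k}'$, and then observe that the extra condition $\alpha_n^\rho<\rho$ is exactly $n>\tfrac{k}{\rho}-1$, so membership in $I_{\rho,k}'$ confines $n$ to the length-one interval $\left(\tfrac{k}{\rho}-1,\tfrac{k}{\rho}\right)$, which contains the single integer $\left\lfloor\tfrac{k}{\rho}\right\rfloor$. The paper instead reduces to the already-proved case $\rho>\tfrac12$: using Proposition \ref{prop:symmetry} it shows $m\in I_\rho\iff m\notin I_{1-\rho}$ and $\lceil m\rho\rceil=k\iff\lceil m(1-\rho)\rceil=m+1-k$, so that $J_{\rho,k}'=J_{1-\rho,k-1}$ and $I_{\rho,k}'=J_{1-\rho,k-1}\setminus I_{1-\rho}$, and then reads off the answer from Proposition \ref{prop:partition, >0.5}. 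Your approach is self-contained and arguably cleaner (it avoids chasing the $N_{\rho,k}$ bookkeeping of the other case), and your closing structural remark --- that $(\alpha_n^\rho)_n$ decreases by $\rho$ across each block $J_{\rho,k}'$, so only the top element survives --- is a nice complement to the paper's observation that it increases by $1-\rho$ across each $J_{\rho,k}$ when $\rho>\tfrac12$; the paper's route, on the other hand, makes the symmetry $I_\rho=(\mathbb{N}\setminus\{0\})\setminus I_{1-\rho}$ explicit, which it records and reuses. One pedantic point: your claim that both endpoints $\tfrac{k-1}{\rho}$ and $\tfrac{k}{\rho}$ are irrational fails for $k=1$ (the left endpoint is $0$), but the lattice-point count $\left\llbracket\lfloor a\rfloor+1,\lfloor b\rfloor\right\rrbracket$ for the integers in $(a,b)$ only needs $b\notin\mathbb{Z}$, so nothing breaks.
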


\begin{figure}[!h]
\begin{center}
\includegraphics[width=11cm]{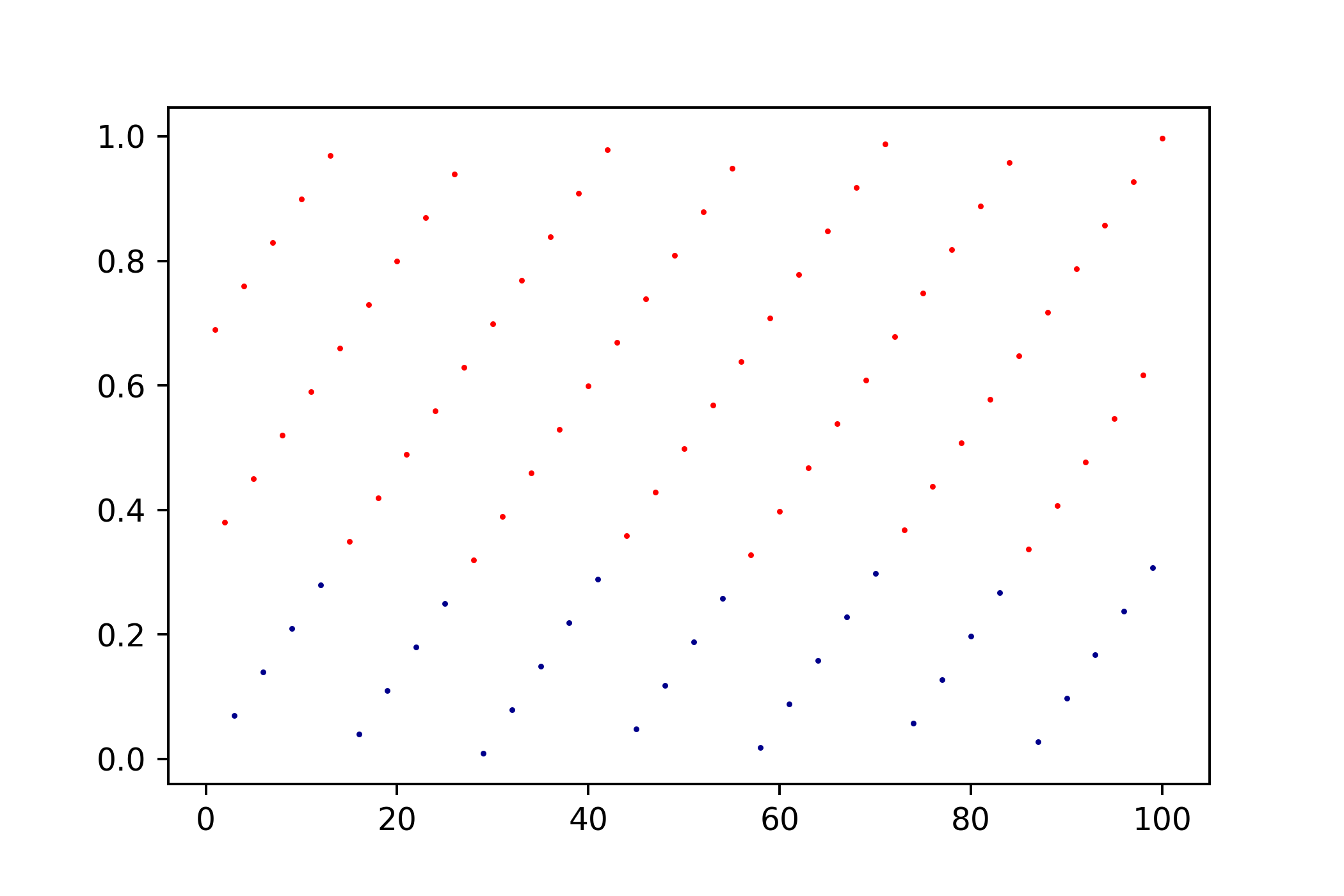}
\end{center}
    \vskip-1cm\caption{First 100 terms of sequence $(\alpha_{n}^\rho)_{n \in \mathbb{N}\setminus\left\{0\right\}}$ for $\rho=0.31+\pi.10^{-5}$. Red points are such that $n\notin I_\rho$ and blue ones are such that $n\in I_\rho$.}
\end{figure}

\begin{proof}
Let $m, k\in\mathbb{N}\setminus\left\{0\right\}$.
\begin{itemize}
\item According to Proposition \ref{prop:symmetry}, we have 

$$\begin{aligned}
&m\in I_\rho \\
\iff&\alpha_m^{\rho}<\rho \\
\iff&\alpha_m^{1-\rho}>1-\rho \\
\iff&m\notin I_{1-\rho}
\end{aligned}$$

\item Additionally 

$$\lceil m(1-\rho)\rceil = m+1-\lceil m\rho\rceil$$

hence 

$$\lceil m\rho\rceil = k \iff \lceil m(1-\rho)\rceil = m+1-k$$

\item Therefore 

$$\begin{aligned}
m\in J_{\rho, k}' &\iff m\in J_{1-\rho, k-1} \\
&\iff m\in\left\llbracket\left\lfloor\tfrac{k-1}{\rho}\right\rfloor+1 , \left\lfloor\tfrac{k}{\rho}\right\rfloor\right\rrbracket
\end{aligned}$$

and

$$\begin{aligned}
m\in I_{\rho, k}' &\iff m\in J_{1-\rho, k-1}\setminus I_{1-\rho} \\
&\iff m=\left\lfloor\tfrac{k}{\rho}\right\rfloor
\end{aligned}$$

\end{itemize}
\end{proof}

Note the "symmetry" property which appeared in the first point of this proof.

$$I_{\rho}=\left(\mathbb{N}\setminus\left\{0\right\}\right)\setminus I_{1-\rho}$$

Since $\rho<\tfrac{1}{2}$, for any $k\in\mathbb{N}$, we have $\text{Card}\left(J_{\rho, k}'\right)\geqslant2$. In order to have a more precise expression for points $A_{m, n}^\rho$ with indices $m, n\in I_\rho$, we regroup points of $I_\rho$ thanks to the following new partition.

\begin{definition}\label{def:new partition, <0.5}
In the following, for any $k\in\mathbb{N}\setminus\left\{0\right\}$, we denote 

$$N_{\rho, k}'=\left\lfloor\tfrac{k}{\rho}\right\rfloor$$

and for any $j\in\mathbb{N}$, denote
$$\begin{aligned}
M_{\rho, j}&=\left\lfloor\tfrac{j\rho}{1-\left\lfloor\tfrac{1}{\rho}\right\rfloor\rho}\right\rfloor \\
Y_{\rho, j}&=\left\{l\in\mathbb{N}\setminus\left\{0\right\}\,|\,(j-1)\rho\leqslant ls_\rho<j\rho\right\} \text{ for }j\geqslant1
\end{aligned}$$

where

$$s_\rho:=\alpha_{N_{\rho, 1}'}^\rho = 1-\left\lfloor\tfrac{1}{\rho}\right\rfloor\rho$$

\end{definition}

One can see $s_\rho$ as a "step" for the sequence $\left(\alpha^\rho_{N_{\rho, k}'}\right)_{k\in\mathbb{N}\setminus\left\{0\right\}}$, and we will show that this sequence is strictly increasing on each set $Y_{\rho, j}$. Note that $\left(Y_{\rho, j}\right)_{j\in\mathbb{N}\setminus\left\{0\right\}}$ defines a partition of $\mathbb{N}\setminus\left\{0\right\}$.

\begin{lemma}\label{lem:explicit expression new partition, <0.5}
For any $j\in\mathbb{N}\setminus\left\{0\right\}$, one has

$$Y_{\rho, j}=\left\llbracket M_{\rho, j-1}+1, M_{\rho, j}\right\rrbracket$$
\end{lemma}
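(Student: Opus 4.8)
Looking at Lemma~\ref{lem:explicit expression new partition, <0.5}, I need to prove that $Y_{\rho,j} = \llbracket M_{\rho,j-1}+1, M_{\rho,j}\rrbracket$ where $Y_{\rho,j} = \{l \in \mathbb{N}\setminus\{0\} : (j-1)\rho \leqslant l s_\rho < j\rho\}$, $M_{\rho,j} = \lfloor j\rho/s_\rho \rfloor$ (since $1 - \lfloor 1/\rho\rfloor\rho = s_\rho$), and $s_\rho = 1 - \lfloor 1/\rho\rfloor\rho$.

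\textbf{Plan of proof.} The plan is to directly translate the defining inequality $(j-1)\rho \leqslant l s_\rho < j\rho$ into a two-sided bound on $l$ and then apply floor functions, being careful about whether the endpoints are attained. Dividing through by $s_\rho > 0$, the condition $l \in Y_{\rho,j}$ is equivalent to $\frac{(j-1)\rho}{s_\rho} \leqslant l < \frac{j\rho}{s_\rho}$. For the upper bound: since $l < \frac{j\rho}{s_\rho}$ and $l$ is an integer, this is equivalent to $l \leqslant \lceil \frac{j\rho}{s_\rho}\rceil - 1$, and I claim $\frac{j\rho}{s_\rho}\notin\mathbb{Z}$ so that $\lceil \frac{j\rho}{s_\rho}\rceil - 1 = \lfloor \frac{j\rho}{s_\rho}\rfloor = M_{\rho,j}$. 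For the lower bound: since $l \geqslant \frac{(j-1)\rho}{s_\rho}$ and $l$ is an integer, this is equivalent to $l \geqslant \lceil \frac{(j-1)\rho}{s_\rho}\rceil$, and using irrationality again $\lceil \frac{(j-1)\rho}{s_\rho}\rceil = \lfloor \frac{(j-1)\rho}{s_\rho}\rfloor + 1 = M_{\rho,j-1}+1$ (for $j\geqslant 2$; the case $j=1$ gives $\frac{0}{s_\rho}=0$ so $l \geqslant 1 = M_{\rho,0}+1$ since $M_{\rho,0}=0$, which still works). Combining, $l\in Y_{\rho,j} \iff M_{\rho,j-1}+1 \leqslant l \leqslant M_{\rho,j}$, which is the claim.

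\textbf{Key steps in order.} First, I would rewrite $s_\rho = 1 - \lfloor 1/\rho\rfloor\rho$ and note $s_\rho \in (0,\rho)$ (indeed $s_\rho > 0$ since $1/\rho \notin \mathbb{Z}$, and $s_\rho < \rho$ because $1/\rho < \lfloor 1/\rho\rfloor + 1$ gives $1 < (\lfloor 1/\rho\rfloor+1)\rho$). Second, I would establish the irrationality fact: $\frac{j\rho}{s_\rho} = \frac{j\rho}{1-\lfloor 1/\rho\rfloor\rho}$ is irrational for every $j \in \mathbb{N}\setminus\{0\}$ — if it equaled a rational $p/q$ then $j\rho q = p(1 - \lfloor 1/\rho\rfloor\rho)$, i.e.\ $\rho(jq + p\lfloor 1/\rho\rfloor) = p$, forcing $\rho \in \mathbb{Q}$ (and $jq + p\lfloor 1/\rho\rfloor \neq 0$ since all quantities are positive), a contradiction; the same argument applies to $\frac{(j-1)\rho}{s_\rho}$ when $j \geqslant 2$. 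Third, I would apply the elementary facts that for $x \notin \mathbb{Z}$ and $l \in \mathbb{Z}$: $l < x \iff l \leqslant \lfloor x\rfloor$ and $l \geqslant x \iff l \geqslant \lfloor x\rfloor + 1$. Finally, I would assemble these and handle the boundary case $j=1$ separately (where $M_{\rho,0} = \lfloor 0 \rfloor = 0$ and the lower condition $0 \leqslant l s_\rho$ is automatic for $l \geqslant 1$).

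\textbf{Main obstacle.} The only genuinely delicate point is making sure the endpoints behave correctly, which hinges on the irrationality of $\frac{j\rho}{s_\rho}$; without it one would have to worry about whether $l = \frac{j\rho}{s_\rho}$ (giving an off-by-one error in the upper endpoint) or similar equalities can occur. Everything else is the standard correspondence between strict/non-strict integer inequalities and floor/ceiling functions. I expect the proof to be short, with the verification of irrationality being the substantive (though routine) content; the $j=1$ edge case is a minor bookkeeping item worth stating explicitly so the partition claim $\bigcup_j Y_{\rho,j} = \mathbb{N}\setminus\{0\}$ remains consistent.
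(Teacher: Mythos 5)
Your proposal is correct and follows essentially the same route as the paper: divide the defining inequality by $s_\rho>0$, use the fact that $\tfrac{j\rho}{1-\lfloor 1/\rho\rfloor\rho}\notin\mathbb{Z}$ (which follows from $\rho\notin\mathbb{Q}$) to convert the strict and non-strict bounds into floor/ceiling form, and check $j=1$ separately. You merely spell out the irrationality and positivity of $s_\rho$ that the paper asserts without comment.
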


\begin{proof}
Let $j, l\in\mathbb{N}\setminus\left\{0\right\}$. One has 

$$\begin{aligned}
&l\in Y_{\rho, j} \\
\iff& (j-1)\rho\leqslant ls_\rho<j\rho \\
\iff& \tfrac{(j-1)\rho}{1-\left\lfloor\tfrac{1}{\rho}\right\rfloor\rho}\leqslant l<\tfrac{j\rho}{1-\left\lfloor\tfrac{1}{\rho}\right\rfloor\rho} \\
\iff& \left\lceil\tfrac{(j-1)\rho}{1-\left\lfloor\tfrac{1}{\rho}\right\rfloor\rho}\right\rceil\leqslant l \leqslant\left\lfloor\tfrac{j\rho}{1-\left\lfloor\tfrac{1}{\rho}\right\rfloor\rho}\right\rfloor \text{ since }\tfrac{j\rho}{1-\left\lfloor\tfrac{1}{\rho}\right\rfloor\rho}\notin\mathbb{Z} \\
\iff&M_{\rho, j-1}+1\leqslant l \leqslant M_{\rho, j} \text{ if }j\geqslant2
\end{aligned}$$

which still holds true for $j=1$.
\end{proof}

\begin{proposition}\label{prop:partition, <0.5}
The following defines a partition of $\mathbb{N}\setminus\left\{0\right\}$

$$\mathbb{N}\setminus\left\{0\right\} = \bigcup_{j=1}^\infty Y_{\rho, j} \text{ with } Y_{\rho, j}=\left\llbracket M_{\rho, j-1}+1, M_{\rho, j}\right\rrbracket$$

One has 

$$I_\rho = \left\{N_{\rho, k}'\,|\,k\in\mathbb{N}\setminus\left\{0\right\}\right\}$$

and for any $i, j\in\mathbb{N}\setminus\left\{0\right\}$ and $k\in Y_{\rho, i}, l\in Y_{\rho, j}$, we have

$$\alpha_{N_{\rho, k}'}^\rho=ks_\rho-(i-1)\rho\text{ and }\alpha_{N_{\rho, l}'}^\rho=ks_\rho-(j-1)\rho$$

and therefore

$$x_{N_{\rho, k}', N_{\rho, l}'}^\rho = \tfrac{k}{(k+l)\left\lfloor\tfrac{1}{\rho}\right\rfloor+i+j-1} \text{ and }y_{N_{\rho, k}', N_{\rho, l}'}^\rho = \tfrac{l}{(k+l)\left\lfloor\tfrac{1}{\rho}\right\rfloor+i+j-1}$$

The sequence $\left(\alpha_{N_{\rho, k}'}^\rho\right)_{k\in\mathbb{N}\setminus\left\{0\right\}}$ is strictly increasing on each interval $Y_{\rho, j}$.
\end{proposition}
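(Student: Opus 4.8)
The plan is to mirror the structure of the proof of Proposition~\ref{prop:partition, >0.5}, but working with the sequence $\bigl(\alpha_{N_{\rho,k}'}^\rho\bigr)_{k\geqslant1}$ along the subsequence $I_\rho=\{N_{\rho,k}'\,|\,k\in\mathbb N\setminus\{0\}\}$ instead of the full sequence $(\alpha_n^\rho)_n$. The identity $I_\rho=\{N_{\rho,k}'\,|\,k\in\mathbb N\setminus\{0\}\}$ is immediate from Lemma~\ref{lem:explicit expression for partitions, <0.5}, since $I_{\rho,k}'=\{\lfloor k/\rho\rfloor\}=\{N_{\rho,k}'\}$ and the $I_{\rho,k}'$ partition $I_\rho$. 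The fact that the $Y_{\rho,j}$ partition $\mathbb N\setminus\{0\}$ and equal $\llbracket M_{\rho,j-1}+1,M_{\rho,j}\rrbracket$ is exactly Lemma~\ref{lem:explicit expression new partition, <0.5} (one also checks $M_{\rho,0}=0$ and $M_{\rho,j}\to\infty$, so the union is all of $\mathbb N\setminus\{0\}$ and the blocks are consecutive).

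Next I would establish the formula for $\alpha_{N_{\rho,k}'}^\rho$ when $k\in Y_{\rho,i}$. The key computation is $\alpha_{N_{\rho,k}'}^\rho=\lceil N_{\rho,k}'\rho\rceil-N_{\rho,k}'\rho=k-\lfloor k/\rho\rfloor\rho$ (using $N_{\rho,k}'=\lfloor k/\rho\rfloor$ and $\lceil\lfloor k/\rho\rfloor\rho\rceil=k$, valid since $\lfloor k/\rho\rfloor\in J_{\rho,k}'$). Then one writes $\lfloor k/\rho\rfloor=k\lfloor1/\rho\rfloor+r_k$ for the appropriate correction term and shows, using the definition of $Y_{\rho,i}$, that $\alpha_{N_{\rho,k}'}^\rho=k s_\rho-(i-1)\rho$ where $s_\rho=1-\lfloor1/\rho\rfloor\rho$; concretely, $k\in Y_{\rho,i}$ means $(i-1)\rho\leqslant ks_\rho<i\rho$, and one verifies that $\lceil N_{\rho,k}'\rho\rceil-N_{\rho,k}'\rho$ reduces to $ks_\rho-(i-1)\rho\in[0,\rho)$, so $\alpha_{N_{\rho,k}'}^\rho<\rho$ automatically reconfirms $N_{\rho,k}'\in I_\rho$. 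From $x_{m,n}^\rho=\lceil m\rho\rceil/(m+n+1)$ with $m=N_{\rho,k}',n=N_{\rho,l}'$, the denominator is $\lfloor k/\rho\rfloor+\lfloor l/\rho\rfloor+1=k\lfloor1/\rho\rfloor+l\lfloor1/\rho\rfloor+(i-1)+(j-1)+1=(k+l)\lfloor1/\rho\rfloor+i+j-1$ (using $\lfloor k/\rho\rfloor=k\lfloor1/\rho\rfloor+i-1$, which follows from the $\alpha$-formula rearranged), giving the claimed coordinates.

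Finally, strict monotonicity of $\bigl(\alpha_{N_{\rho,k}'}^\rho\bigr)_k$ on each $Y_{\rho,j}$ is immediate from the formula: for $k,k+1$ both in $Y_{\rho,j}$, the index $i=j$ is the same for both, so $\alpha_{N_{\rho,k+1}'}^\rho-\alpha_{N_{\rho,k}'}^\rho=s_\rho>0$ since $s_\rho=1-\lfloor1/\rho\rfloor\rho\in(0,\rho)$ by definition of $\lfloor1/\rho\rfloor$. The main obstacle I anticipate is the bookkeeping in the step that matches $\lfloor k/\rho\rfloor$ with $k\lfloor1/\rho\rfloor+(i-1)$: one must carefully track that the integer part $\lfloor k/\rho\rfloor$ increases by exactly $\lfloor1/\rho\rfloor$ as $k\mapsto k+1$ within a block $Y_{\rho,j}$ and by $\lfloor1/\rho\rfloor+1$ at block boundaries, which is where the definition of $M_{\rho,j}$ via $\lfloor j\rho/s_\rho\rfloor$ does the work; getting the half-open interval conventions and the non-integrality of $j\rho/s_\rho$ right (as already used in Lemma~\ref{lem:explicit expression new partition, <0.5}) is the delicate point, but it is entirely routine once the $\alpha$-formula is pinned down.
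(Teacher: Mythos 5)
Your proposal is correct and follows essentially the same route as the paper's proof: reduce the first two claims to Lemmas \ref{lem:explicit expression for partitions, <0.5} and \ref{lem:explicit expression new partition, <0.5}, then pin down $N_{\rho,k}'=k\left\lfloor\tfrac{1}{\rho}\right\rfloor+i-1$ from the defining inequalities $(i-1)\rho\leqslant ks_\rho<i\rho$ of $Y_{\rho,i}$, which yields the $\alpha$-formula, the coordinates, and the monotonicity (difference $s_\rho>0$) exactly as in the paper.
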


\begin{proof}

\begin{itemize}
\item The first two claims are true according to Lemmas \ref{lem:explicit expression for partitions, <0.5} and \ref{lem:explicit expression new partition, <0.5}.
\item Let $i, j\in\mathbb{N}\setminus\left\{0\right\}$ and $k\in Y_{\rho, i}, l\in Y_{\rho, j}$. Since $N_{\rho, k}'\in I_{\rho, k}'$, one has 

$$\alpha_{N_{\rho, k}'}^\rho=k-N_{\rho, k}'\rho$$

We know that $N_{k, \rho}'=\left\lfloor\tfrac{k}{\rho}\right\rfloor$, so we need to prove that integer $k\left\lfloor\tfrac{1}{\rho}\right\rfloor+i-1$ belongs to $\left(\tfrac{k}{\rho}-1, \tfrac{k}{\rho}\right]$. One has 

$$k\left\lfloor\tfrac{1}{\rho}\right\rfloor+i-1 = \tfrac{k}{\rho}\left( 1-s_\rho\right)+i-1$$

and since $k\in Y_{\rho, j}$, we have 

$$\tfrac{i-1}{k}\rho\leqslant s_\rho<\tfrac{i}{k}\rho$$

Therefore we obtain that 

$$\tfrac{k}{\rho}-1<k\left\lfloor\tfrac{1}{\rho}\right\rfloor+i-1\leqslant\tfrac{k}{\rho}$$

Hence 

$$N_{\rho, k}'= k\left\lfloor\tfrac{1}{\rho}\right\rfloor+i-1 \text{ and }\alpha_{N_{\rho, k}'}^\rho =k-N_{\rho, k}'\rho= ks_\rho-(i-1)\rho$$

thus we obtain 

$$x_{N_{\rho, k}', N_{\rho, l}'}^\rho = \tfrac{k}{(k+l)\left\lfloor\tfrac{1}{\rho}\right\rfloor+i+j-1} \text{ and }y_{N_{\rho, k}', N_{\rho, l}'}^\rho = \tfrac{l}{(k+l)\left\lfloor\tfrac{1}{\rho}\right\rfloor+i+j-1}$$

\item Finally, according to the second point, one may notice that the sequence $\left(\alpha_{N_{\rho, k}'}^\rho\right)_{k\in\mathbb{N}\setminus\left\{0\right\}}$ is strictly increasing on each interval $Y_{\rho, j}$.
\end{itemize}
\end{proof}

\section{Best diagonal point}\label{sec:2}

Let us recall that extreme points of $\Lambda_\rho$ are elements of $L_\rho$. Here we focus on diagonal points $A_{m, m}^\rho$ belonging to $L_\rho$ :

$$\forall m\in\mathbb{N}, A_{m, m}^\rho=\left(\tfrac{\lceil m\rho\rceil}{2m+1}, \tfrac{\lceil m\rho\rceil}{2m+1}\right)$$ 

Note that 

$$x_{m, m}^\rho=y_{m, m}^\rho\underset{m\to\infty}\rightarrow\left(\tfrac{\rho}{2}, \tfrac{\rho}{2}\right)$$

We are looking for a potential diagonal extreme point of $\Lambda_\rho$ among the points of $L_\rho$ on the diagonal, that is, the one with the largest coordinates. We will call this point \textit{ the best diagonal point} of the set $\Lambda_\rho$. So we are trying to prove the existence and determine the exact form of $\underset{m\in I_\rho}\max\,x_{m, m}^\rho$.

\subsection{Case $\rho>\frac{1}{2}$}

In this subsection, we assume that $\rho>\tfrac{1}{2}$. By Proposition \ref{prop:partition, >0.5}, we have the following. 

$$\forall k\in\mathbb{N}, \forall n\in I_{\rho, k}, x_{n, n}^\rho=\tfrac{n-k}{2n+1}$$

Hence we obtain the following.

\begin{corollary}\label{coro:x_nn increasing on partition, >0.5}
Let $k\in\mathbb{N}$. The sequence $(x_{n, n}^\rho)_{n\in I_\rho}$ is strictly increasing on the interval~$I_{\rho, k}$.
\end{corollary}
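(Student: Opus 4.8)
The plan is to reduce everything to the explicit formula for $x_{n,n}^\rho$ on the interval $I_{\rho,k}$ and then to a one-variable monotonicity check. By Proposition~\ref{prop:partition, >0.5}, the set $I_{\rho,k}$ equals $\left\llbracket N_{\rho,k-1}+2, N_{\rho,k}\right\rrbracket$, so it is an interval of consecutive integers, and for every $n\in I_{\rho,k}$ one has $\alpha_n^\rho=n-k-n\rho$, hence $\lceil n\rho\rceil=n-k$ and therefore
$$x_{n,n}^\rho=\frac{\lceil n\rho\rceil}{2n+1}=\frac{n-k}{2n+1}.$$

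It then suffices to show that the function $f:t\mapsto \frac{t-k}{2t+1}$ is strictly increasing on $\left(-\tfrac12,+\infty\right)$, since all indices in $I_{\rho,k}$ lie in this range and the claimed sequence is just $n\mapsto f(n)$ restricted to the integer interval $I_{\rho,k}$. This is immediate from
$$f'(t)=\frac{(2t+1)-2(t-k)}{(2t+1)^2}=\frac{2k+1}{(2t+1)^2}>0,$$
using $k\geqslant 0$; alternatively one can verify directly that for consecutive $n,n+1\in I_{\rho,k}$,
$$x_{n+1,n+1}^\rho-x_{n,n}^\rho=\frac{n+1-k}{2n+3}-\frac{n-k}{2n+1}=\frac{2k+1}{(2n+1)(2n+3)}>0,$$
which gives the strict increase on $I_{\rho,k}$ without appealing to the real variable.

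There is essentially no obstacle here: the only point requiring care is to invoke Proposition~\ref{prop:partition, >0.5} correctly, both to know that $I_{\rho,k}$ is a (possibly empty or singleton) interval of integers and to replace $\lceil n\rho\rceil$ by $n-k$ uniformly on that interval; once this is in place the statement is a routine elementary computation. If $I_{\rho,k}$ has at most one element the statement is vacuous, so we may assume it contains at least two consecutive integers, and the displayed difference quotient finishes the argument.
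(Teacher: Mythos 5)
Your proposal is correct and follows the same route as the paper: the paper derives the corollary directly from the formula $x_{n,n}^\rho=\tfrac{n-k}{2n+1}$ on $I_{\rho,k}$ supplied by Proposition~\ref{prop:partition, >0.5}, leaving the elementary monotonicity check implicit, which you have simply written out via the difference $\tfrac{2k+1}{(2n+1)(2n+3)}>0$. No issues.
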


\begin{lemma}\label{lem:bounds on maxima, >0.5}
For any $k\in\mathbb{N}$, one has 

$$k(N_{\rho, 0}+1)+N_{\rho, 0}\leqslant N_{\rho, k}\leqslant k(N_{\rho, 0}+2)+N_{\rho, 0}$$
\end{lemma}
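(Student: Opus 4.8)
The plan is to exploit the explicit formula $N_{\rho,k} = \left\lfloor \tfrac{k+1}{1-\rho}\right\rfloor - 1$ from Definition \ref{def:maxima partition, >0.5} and Proposition \ref{prop:partition, >0.5}, together with the elementary bounds $x-1 < \lfloor x \rfloor \leqslant x$ (in fact strict on both sides here, since $\tfrac{k+1}{1-\rho}$ is never an integer because $\rho$ is irrational). First I would write $N_{\rho,0} = \left\lfloor \tfrac{1}{1-\rho}\right\rfloor - 1$, set $t := \tfrac{1}{1-\rho}$ for brevity, and observe that $N_{\rho,k} = \lfloor (k+1)t \rfloor - 1$ while $N_{\rho,0} = \lfloor t \rfloor - 1$. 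The claim to be proven then becomes, after rearranging, $k\lfloor t\rfloor \leqslant \lfloor (k+1)t\rfloor - \lfloor t\rfloor \leqslant k(\lfloor t\rfloor + 1)$, i.e. $(k+1)\lfloor t\rfloor \leqslant \lfloor (k+1)t\rfloor \leqslant (k+1)\lfloor t\rfloor + k$.

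For the lower bound $(k+1)\lfloor t\rfloor \leqslant \lfloor (k+1)t\rfloor$: since $\lfloor t\rfloor \leqslant t$ one has $(k+1)\lfloor t\rfloor \leqslant (k+1)t$, and as the left side is an integer it is at most $\lfloor (k+1)t\rfloor$. For the upper bound $\lfloor (k+1)t\rfloor \leqslant (k+1)\lfloor t\rfloor + k$: write $t = \lfloor t\rfloor + \{t\}$ with $0 < \{t\} < 1$, so $(k+1)t = (k+1)\lfloor t\rfloor + (k+1)\{t\}$ and hence $\lfloor (k+1)t\rfloor = (k+1)\lfloor t\rfloor + \lfloor (k+1)\{t\}\rfloor \leqslant (k+1)\lfloor t\rfloor + k$, using $(k+1)\{t\} < k+1$ together with integrality. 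Substituting back $\lfloor t\rfloor = N_{\rho,0}+1$ gives exactly $k(N_{\rho,0}+1) + N_{\rho,0} \leqslant N_{\rho,k} \leqslant k(N_{\rho,0}+2) + N_{\rho,0}$.

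There is essentially no hard part here: the lemma is a packaging of the subadditivity/superadditivity of the floor function applied to the arithmetic progression $(k+1)t$, and the only care needed is bookkeeping the off-by-one shifts between $N_{\rho,k}$ and $\lfloor (k+1)t\rfloor$. If one wants to emphasize sharpness one could note that the irrationality of $\rho$ makes all the floor arguments non-integral, so the bounds are attained for suitable $k$ (by equidistribution of $\{(k+1)t\}$), but this observation is not needed for the statement.
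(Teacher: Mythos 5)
Your proof is correct, and it rests on the same elementary floor-function facts as the paper's. The only organizational difference is that the paper proves the increment bound $N_{\rho,0}+1\leqslant N_{\rho,k+1}-N_{\rho,k}\leqslant N_{\rho,0}+2$ (using $\lfloor x+l\rfloor=\lfloor x\rfloor+l$ and monotonicity of the floor) and then telescopes, whereas you bound $\lfloor (k+1)t\rfloor$ against $(k+1)\lfloor t\rfloor$ in one shot via $t=\lfloor t\rfloor+\{t\}$; the two are interchangeable here, and your version is marginally more direct. Your closing remark about sharpness via equidistribution is not needed and the paper does not pursue it, but it does no harm.
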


\begin{proof}
Let's prove the following claim 

$$\forall k\in\mathbb{N}, N_{\rho, 0}+1\leqslant N_{\rho, k+1}-N_{\rho, k}\leqslant N_{\rho, 0}+2$$

If this claim is true, then the result holds. We will use the fact that the floor function is increasing and the following property  

$$\forall x\in\mathbb{R}, \forall l\in\mathbb{Z}, \left\lfloor x+l\right\rfloor=\left\lfloor x\right\rfloor+l$$

Let $k\in\mathbb{N}$. Then we have

$$\begin{aligned}
N_{\rho, k+1}-N_{\rho, k}&= \left\lfloor\tfrac{k+1}{1-\rho}+\tfrac{1}{1-\rho}\right\rfloor-\left\lfloor\tfrac{k+1}{1-\rho}\right\rfloor \\
&\geqslant \left\lfloor\tfrac{k+1}{1-\rho}+\left\lfloor\tfrac{1}{1-\rho}\right\rfloor\right\rfloor-\left\lfloor\tfrac{k+1}{1-\rho}\right\rfloor \\
&= \left\lfloor\tfrac{1}{1-\rho}\right\rfloor \\
&= N_{\rho, 0}+1
\end{aligned}$$

and in the same way one obtains 

$$N_{\rho, k+1}-N_{\rho, k}\leqslant N_{\rho, 0}+2$$

Thus the claim we stated at the beginning of the proof holds.
\end{proof}

We conclude with the following

\begin{theorem}[Best diagonal point, $\rho>\tfrac{1}{2}$]\label{thm:best diagonal point, >0.5}
If $\rho>\tfrac{1}{2}$, then 

$$\begin{aligned}\underset{n\in I_\rho}\max\, x_{n, n}^\rho &= x_{N_{\rho, 0}, N_{\rho, 0}}^\rho \\
&= \tfrac{N_{\rho, 0}}{2N_{\rho, 0}+1} \text{ with }N_{\rho, 0} = \left\lfloor\tfrac{1}{1-\rho}\right\rfloor-1 \\
&=\tfrac{\left\lfloor\tfrac{1}{1-\rho}\right\rfloor-1}{2\left\lfloor\tfrac{1}{1-\rho}\right\rfloor-1}
\end{aligned}$$
\end{theorem}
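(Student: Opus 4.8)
The plan is to reduce the maximization over the whole index set $I_\rho$ to a maximization over the right endpoints of the intervals $I_{\rho,k}$ of the partition, and then to compare these endpoint values across $k$ using the two‑sided estimate of Lemma \ref{lem:bounds on maxima, >0.5}.

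First I would use Proposition \ref{prop:partition, >0.5}: each $I_{\rho,k}=\llbracket N_{\rho,k-1}+2,N_{\rho,k}\rrbracket$ is nonempty (automatic when $\rho>\tfrac12$, since then $\tfrac{1}{1-\rho}>2$ forces consecutive $N_{\rho,k}$ to differ by at least $N_{\rho,0}+1\ge2$), and on it $x_{n,n}^\rho=\tfrac{n-k}{2n+1}$. By Corollary \ref{coro:x_nn increasing on partition, >0.5}, $(x_{n,n}^\rho)_{n\in I_\rho}$ is strictly increasing on $I_{\rho,k}$, so its maximum over $I_{\rho,k}$ is attained at the right endpoint $n=N_{\rho,k}$, with value $g(k):=\tfrac{N_{\rho,k}-k}{2N_{\rho,k}+1}$. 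Since $(I_{\rho,k})_{k\ge0}$ partitions $I_\rho$, it then suffices to show $\sup_{k\in\mathbb{N}}g(k)=g(0)=\tfrac{N_{\rho,0}}{2N_{\rho,0}+1}$ and that it is attained (which it will be, at $k=0$, hence at $n=N_{\rho,0}\in I_{\rho,0}$).

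The core step is the inequality $g(k)\le g(0)$ for all $k\ge1$. Write $a:=N_{\rho,0}=\lfloor\tfrac{1}{1-\rho}\rfloor-1$; since $\rho>\tfrac12$ we have $\tfrac{1}{1-\rho}>2$, hence $a\ge1$. For fixed $k$ the map $t\mapsto\tfrac{t-k}{2t+1}$ is strictly increasing for $t>-\tfrac12$ (its derivative equals $\tfrac{2k+1}{(2t+1)^2}>0$), so substituting the upper bound $N_{\rho,k}\le k(a+2)+a$ from Lemma \ref{lem:bounds on maxima, >0.5} gives $g(k)\le\tfrac{k(a+1)+a}{2k(a+2)+2a+1}$. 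A direct cross‑multiplication shows $\tfrac{k(a+1)+a}{2k(a+2)+2a+1}\le\tfrac{a}{2a+1}$ is equivalent to $k(a-1)\ge0$, which holds because $a\ge1$. Combined with $g(0)=\tfrac{a}{2a+1}$, this yields $\max_{n\in I_\rho}x_{n,n}^\rho=\tfrac{N_{\rho,0}}{2N_{\rho,0}+1}$, and substituting $N_{\rho,0}=\lfloor\tfrac{1}{1-\rho}\rfloor-1$ gives the closed form $\tfrac{\lfloor 1/(1-\rho)\rfloor-1}{2\lfloor 1/(1-\rho)\rfloor-1}$.

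There is no real obstacle: the only point needing a bit of care is recognizing that the crude one‑sided bound of Lemma \ref{lem:bounds on maxima, >0.5} already suffices once one exploits the monotonicity of $t\mapsto\tfrac{t-k}{2t+1}$, and that the decisive inequality is exactly $k(a-1)\ge0$. When $a=1$ (that is, $\rho\in(\tfrac12,\tfrac23)$) this is an equality, so some endpoint values $g(k)$ may coincide with $g(0)$; this does not affect the value of the maximum, but it is worth a remark that the best diagonal point is nonetheless $A^\rho_{N_{\rho,0},N_{\rho,0}}$ as asserted.
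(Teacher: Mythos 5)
Your proposal is correct and follows essentially the same route as the paper's proof: reduce via the monotonicity of $(x_{n,n}^\rho)_{n\in I_\rho}$ on each $I_{\rho,k}$ to the right endpoints $N_{\rho,k}$, then compare with $k=0$ using the upper bound of Lemma \ref{lem:bounds on maxima, >0.5}, the decisive fact in both arguments being $N_{\rho,0}\geqslant 1$. The only cosmetic difference is that you substitute the bound before cross-multiplying (via monotonicity of $t\mapsto\tfrac{t-k}{2t+1}$) whereas the paper cross-multiplies first; your closing remark about possible equality when $N_{\rho,0}=1$ correctly anticipates the later discussion of whether $D_\rho$ is extreme.
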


\begin{proof}
We start with the problem of finding $\underset{n\in I_\rho}\sup\, x_{n, n}^\rho$. Then, thanks to Corollary \ref{coro:x_nn increasing on partition, >0.5}, the problem can be reduced to finding 
$\underset{k\in\mathbb{N}}\sup\, x_{N_{\rho, k}, N_{\rho, k}}^\rho$. Finally, let's prove that this supremum is achieved and

$$\underset{k\in\mathbb{N}}\max\, x_{N_{\rho, k}, N_{\rho, k}}^\rho = x_{N_{\rho, 0}, N_{\rho, 0}}^\rho.$$

Let $k\in\mathbb{N}\setminus\left\{0\right\}$. Since $N_{\rho, 0}\in J_{\rho, 0}$ and $N_{\rho, k}\in J_{\rho, k}$, we have 

$$x_{N_{\rho, 0}, N_{\rho, 0}}^\rho=\tfrac{N_{\rho, 0}}{2N_{\rho, 0}+1} \text{ and }x_{N_{\rho, k}, N_{\rho, k}}^\rho = \tfrac{N_{\rho, k}-k}{2N_{\rho, k}+1}$$

Therefore 

$$\begin{aligned}
&x_{N_{\rho, 0}, N_{\rho, 0}}^\rho\geqslant x_{N_{\rho, k}, N_{\rho, k}}^\rho \\
\iff&(2N_{\rho, 0}+1)(N_{\rho, k}-k)\leqslant(2N_{\rho, k}+1)N_{\rho, 0} \\
\iff& N_{\rho, k}\leqslant k+N_{\rho, 0}(2k+1)
\end{aligned}$$

which is true because $N_{\rho, 0}\geqslant1$ and according to Lemma \ref{lem:bounds on maxima, >0.5}. This proves the theorem.

\end{proof}

Note that 

\begin{proposition}\label{prop:best diagonal point>rho/2, >0.5}
One has 

$$\tfrac{2}{3}\rho>\tfrac{\left\lfloor\tfrac{1}{1-\rho}\right\rfloor-1}{2\left\lfloor\tfrac{1}{1-\rho}\right\rfloor-1}>\tfrac{1}{2}\rho$$
\end{proposition}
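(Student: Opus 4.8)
The plan is to set $p := \left\lfloor\tfrac{1}{1-\rho}\right\rfloor$, so that $N_{\rho,0} = p-1$ and the quantity to be bounded is $\tfrac{p-1}{2p-1}$. Since $\rho > \tfrac12$ we have $p \geqslant 2$, hence $p-1 \geqslant 1$ and $2p-1 \geqslant 3$. The key elementary fact linking $p$ to $\rho$ is that $p = \left\lfloor\tfrac{1}{1-\rho}\right\rfloor$ means $p \leqslant \tfrac{1}{1-\rho} < p+1$, equivalently $1 - \tfrac1p \geqslant \rho > 1 - \tfrac{1}{p+1} = \tfrac{p}{p+1}$. (The left inequality is non-strict only in the rational case, which is excluded, so in fact $1 - \tfrac1p > \rho$, but the non-strict form already suffices.) I would carry out the two inequalities separately.

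For the lower bound $\tfrac{p-1}{2p-1} > \tfrac12\rho$: using $\rho < 1 - \tfrac1p = \tfrac{p-1}{p}$, it is enough to show $\tfrac{p-1}{2p-1} \geqslant \tfrac12 \cdot \tfrac{p-1}{p}$, i.e. $\tfrac{1}{2p-1} \geqslant \tfrac{1}{2p}$, which is clear since $2p-1 < 2p$; combined with the strict inequality on $\rho$ this gives $\tfrac{p-1}{2p-1} > \tfrac12\rho$. For the upper bound $\tfrac{2}{3}\rho > \tfrac{p-1}{2p-1}$: using $\rho > \tfrac{p}{p+1}$, it suffices to show $\tfrac23 \cdot \tfrac{p}{p+1} \geqslant \tfrac{p-1}{2p-1}$, i.e. $2p(2p-1) \geqslant 3(p+1)(p-1)$, i.e. $4p^2 - 2p \geqslant 3p^2 - 3$, i.e. $p^2 - 2p + 3 \geqslant 0$, i.e. $(p-1)^2 + 2 \geqslant 0$, which always holds; combined with the strict inequality on $\rho$ this gives $\tfrac23\rho > \tfrac{p-1}{2p-1}$.

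So the whole argument reduces to a cross-multiplication in each direction plus a completion of the square, and there is essentially no obstacle — the only point requiring a moment's care is making sure a \emph{strict} inequality comes out at the end, which is guaranteed because $\rho$ is irrational so neither floor relation $p \leqslant \tfrac{1}{1-\rho}$ nor $\tfrac{1}{1-\rho} < p+1$ is an equality; in fact it is enough that at least one of the two polynomial inequalities $\tfrac{1}{2p-1} \geqslant \tfrac{1}{2p}$ and $(p-1)^2+2 \geqslant 0$ be combined with the strict bound on $\rho$, which we do in each case. I would write this up in two or three lines, substituting $p = \left\lfloor\tfrac{1}{1-\rho}\right\rfloor$ back at the end.
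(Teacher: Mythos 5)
Your overall strategy is the same as the paper's (translate the value of the floor into an interval for $\rho$, then cross-multiply), but your very first step is reversed, and this breaks both halves of the argument. From $p\leqslant\tfrac{1}{1-\rho}<p+1$ one gets $1-\rho\leqslant\tfrac1p$ and $1-\rho>\tfrac{1}{p+1}$, i.e.
$$1-\tfrac1p\leqslant\rho<1-\tfrac{1}{p+1}=\tfrac{p}{p+1},$$
not $1-\tfrac1p\geqslant\rho>\tfrac{p}{p+1}$ as you wrote; your version describes an empty interval, since $1-\tfrac1p=\tfrac{p-1}{p}<\tfrac{p}{p+1}$. Consequently the bound $\rho<\tfrac{p-1}{p}$ you invoke for the lower estimate is false (in fact $\rho>\tfrac{p-1}{p}$ because $\rho$ is irrational), and the bound $\rho>\tfrac{p}{p+1}$ you invoke for the upper estimate is also false. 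A telltale symptom is that neither of your two polynomial inequalities actually uses $p\geqslant2$: the first, $\tfrac{1}{2p-1}\geqslant\tfrac{1}{2p}$, and the second, $(p-1)^2+2\geqslant0$, hold for $p=1$ too, so if your chains were valid they would prove the proposition for $\rho<\tfrac12$ as well, where the middle quantity equals $0$ and the claim is plainly false.

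The repair is mechanical and lands exactly on the paper's proof. For the lower bound, show $\tfrac{p-1}{2p-1}\geqslant\tfrac12\cdot\tfrac{p}{p+1}$, which after cross-multiplication reduces to $2p^2-2\geqslant 2p^2-p$, i.e.\ $p\geqslant2$, and combine with the correct strict bound $\rho<\tfrac{p}{p+1}$. For the upper bound, show $\tfrac23\cdot\tfrac{p-1}{p}\geqslant\tfrac{p-1}{2p-1}$, which reduces to $4p-2\geqslant3p$, again $p\geqslant2$, and combine with $\rho>\tfrac{p-1}{p}$. Note that both corrected polynomial inequalities genuinely require $p\geqslant2$, which is precisely where the hypothesis $\rho>\tfrac12$ enters.
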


\begin{proof}
Let $k\in\mathbb{N}\setminus\left\{0, 1\right\}$. One has 

$$\left\lfloor\tfrac{1}{1-\rho}\right\rfloor = k \iff \rho\in\left(1-\tfrac{1}{k}, 1-\tfrac{1}{k+1}\right)$$

Then if $\rho\in\left(1-\tfrac{1}{k}, 1-\tfrac{1}{k+1}\right)$, one has 

$$\tfrac{\left\lfloor\tfrac{1}{1-\rho}\right\rfloor-1}{2\left\lfloor\tfrac{1}{1-\rho}\right\rfloor-1} = \tfrac{k-1}{2k-1}\geqslant \tfrac{1}{2}\left(1-\tfrac{1}{k+1}\right)>\tfrac{1}{2}\rho$$

and 

$$\tfrac{k-1}{2k-1}\leqslant\tfrac{2}{3}\left(1-\tfrac{1}{k}\right)<\tfrac{2}{3}\rho$$
\end{proof}

Note that the factors $\tfrac{1}{2}$ and $\tfrac{2}{3}$ are optimal, since they can be approached arbitrarily close as $\rho$ goes to $1$ from below (respectively to $\tfrac{1}{2}$ from above). Also note that we have the simpler upper bound 

$$\tfrac{\left\lfloor\tfrac{1}{1-\rho}\right\rfloor-1}{2\left\lfloor\tfrac{1}{1-\rho}\right\rfloor-1}<\tfrac{1}{2}$$

\begin{figure}[!h]
\begin{center}
\includegraphics[width=11cm]{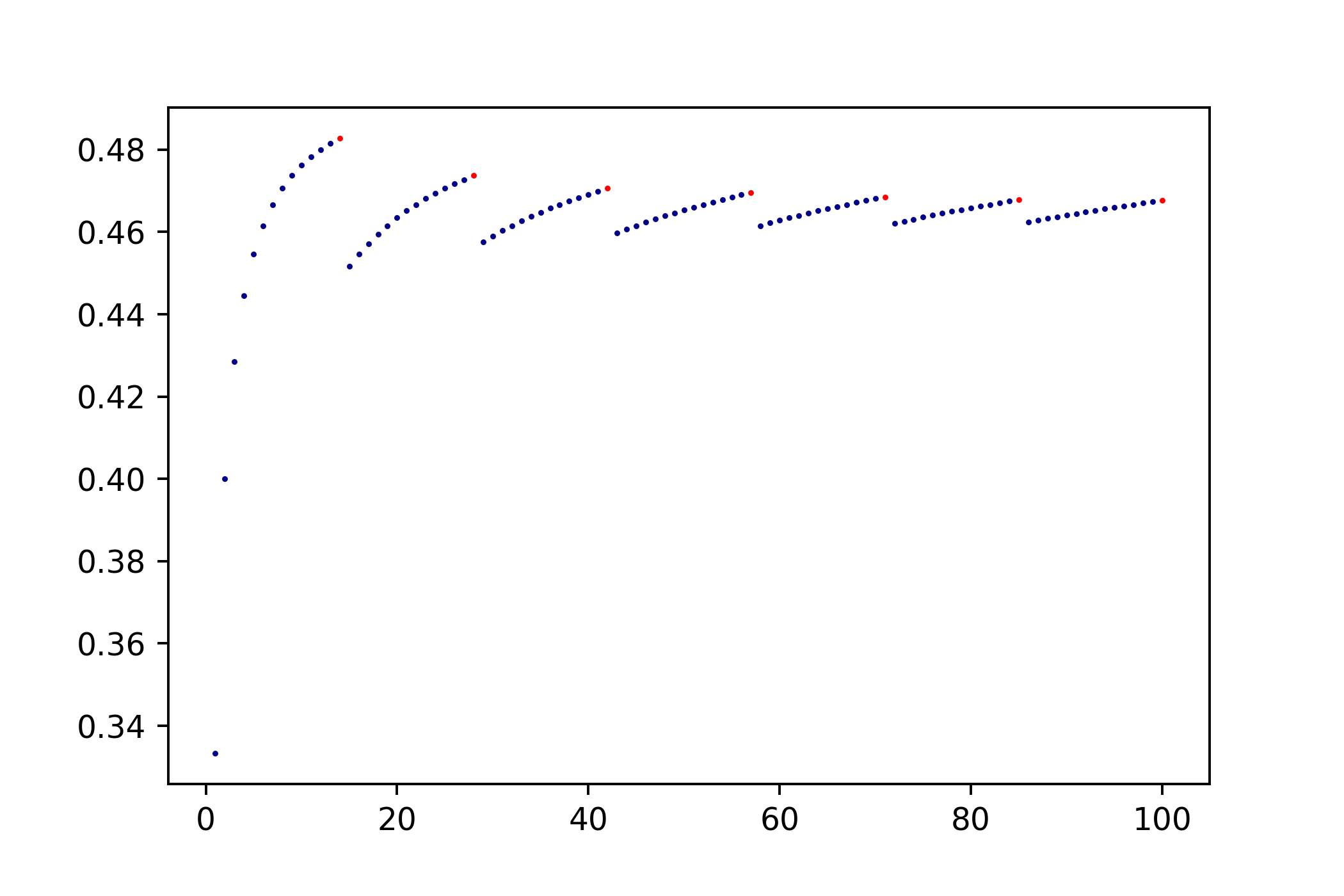}
\end{center}
    \vskip-1cm\caption{First 100 terms of sequence $(x_{n, n}^\rho)_{n \in \mathbb{N}\setminus\left\{0\right\}}$ for $\rho=0.93+\pi.10^{-5}$. Red points are such that $n\notin I_\rho$ and blue ones are such that $n\in I_\rho$.}
\end{figure}

\subsection{Case $\rho<\frac{1}{2}$}

In this subsection, we assume that $\rho<\tfrac{1}{2}$. By Proposition \ref{prop:partition, <0.5}, we have 

$$\forall i\in\mathbb{N}\setminus\left\{0\right\}, \forall k\in Y_{\rho, i}\,,\, x_{N_{\rho, k}', N_{\rho, k}'}^\rho = \tfrac{k}{2k\left\lfloor\tfrac{1}{\rho}\right\rfloor+2i-1}$$

Hence we obtain the following

\begin{corollary}\label{coro:x_nn increasing on new partition, <0.5}
Let $i\in\mathbb{N}\setminus\left\{0\right\}$. The sequence $\left(x_{N_{\rho, k}', N_{\rho, k}'}^\rho\right)_{k\in \mathbb{N}\setminus\left\{0\right\}}$ is strictly increasing on the interval~$Y_{\rho, i}$.
\end{corollary}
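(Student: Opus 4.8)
The plan is to deduce the statement directly from the closed form for the diagonal points established in Proposition~\ref{prop:partition, <0.5}. Fix $i\in\mathbb{N}\setminus\left\{0\right\}$. On the block $Y_{\rho, i}$ the ``level'' index $i$ is constant, so specializing the formula of Proposition~\ref{prop:partition, <0.5} to $l=k$ (hence $j=i$) gives, for every $k\in Y_{\rho, i}$,
$$x_{N_{\rho, k}', N_{\rho, k}'}^\rho = \frac{k}{2k\left\lfloor\tfrac{1}{\rho}\right\rfloor+2i-1}.$$
Since $k\geqslant 1>0$, I would rewrite this as
$$x_{N_{\rho, k}', N_{\rho, k}'}^\rho = \left(2\left\lfloor\tfrac{1}{\rho}\right\rfloor+\frac{2i-1}{k}\right)^{-1}.$$

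The key observation is then that $2i-1\geqslant 1>0$, so $k\mapsto \tfrac{2i-1}{k}$ is strictly decreasing on the positive integers; hence $k\mapsto 2\left\lfloor\tfrac{1}{\rho}\right\rfloor+\tfrac{2i-1}{k}$ is strictly decreasing, and its (positive) reciprocal $k\mapsto x_{N_{\rho, k}', N_{\rho, k}'}^\rho$ is strictly increasing. Applying this to any two indices $k<k'$ lying in $Y_{\rho, i}$ yields $x_{N_{\rho, k}', N_{\rho, k}'}^\rho<x_{N_{\rho, k'}', N_{\rho, k'}'}^\rho$, which is precisely the asserted strict monotonicity on $Y_{\rho, i}$. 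Equivalently, one could differentiate $f(k)=\tfrac{k}{2ak+b}$ with $a=\left\lfloor\tfrac{1}{\rho}\right\rfloor$ and $b=2i-1$, obtaining $f'(k)=\tfrac{b}{(2ak+b)^2}>0$, but the reciprocal form makes the sign manifest without any computation.

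There is essentially no obstacle here: all the arithmetic content---identifying $N_{\rho, k}'$, computing $\alpha_{N_{\rho, k}'}^\rho$, and hence the coordinates of $A_{N_{\rho, k}', N_{\rho, k}'}^\rho$---has already been carried out in Proposition~\ref{prop:partition, <0.5}, and what remains is only the elementary monotonicity of a fractional-linear function of $k$ with positive ``numerator coefficient''. The one point worth a word of care is that the argument is confined to a single block: when $k$ passes from $Y_{\rho, i}$ to $Y_{\rho, i+1}$ the constant $2i-1$ jumps to $2i+1$, so the full sequence $\left(x_{N_{\rho, k}', N_{\rho, k}'}^\rho\right)_{k\in\mathbb{N}\setminus\left\{0\right\}}$ need not be monotone, which is exactly why the corollary is phrased blockwise.
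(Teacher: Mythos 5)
Your proof is correct and follows exactly the paper's route: the paper simply displays the formula $x_{N_{\rho, k}', N_{\rho, k}'}^\rho = \tfrac{k}{2k\left\lfloor 1/\rho\right\rfloor+2i-1}$ for $k\in Y_{\rho,i}$ (a specialization of Proposition~\ref{prop:partition, <0.5}) and states the corollary as an immediate consequence, which is precisely the elementary monotonicity of $k\mapsto k/(2ak+b)$ with $b=2i-1>0$ that you verify via the reciprocal. Your closing remark about why the monotonicity is only blockwise is a correct and welcome clarification, though not needed for the statement itself.
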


\begin{lemma}\label{lem:bounds on maxima, <0.5}
For any $j\in\mathbb{N}\setminus\left\{0, 1\right\}$, one has 

$$j M_{\rho, 1}\leqslant M_{\rho, j}\leqslant j M_{\rho, 1}+j-1$$
\end{lemma}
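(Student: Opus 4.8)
The plan is to mimic the proof of Lemma~\ref{lem:bounds on maxima, >0.5}, but working with the partition $\left(Y_{\rho, j}\right)_{j}$ and the sequence $\left(M_{\rho, j}\right)_j$ instead. As there, it suffices to establish the one-step estimate
$$\forall j\in\mathbb{N}\setminus\left\{0\right\},\quad M_{\rho, 1}\leqslant M_{\rho, j+1}-M_{\rho, j}\leqslant M_{\rho, 1}+1,$$
since telescoping this inequality from $j=1$ to $j-1$ immediately yields $jM_{\rho,1}\leqslant M_{\rho,j}\leqslant jM_{\rho,1}+j-1$ (the constant term $M_{\rho,1}$ from the index $j=1$ already sits on the left, and the slack of $1$ accumulates $j-1$ times).

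To prove the one-step estimate, recall $M_{\rho, j}=\left\lfloor\tfrac{j\rho}{s_\rho}\right\rfloor$ where $s_\rho=1-\left\lfloor\tfrac{1}{\rho}\right\rfloor\rho\in(0,\rho)$, so the ratio $\tfrac{\rho}{s_\rho}>1$. First I would use the standard fact that for any $x\in\mathbb{R}$ and any integer $l$, $\left\lfloor x+l\right\rfloor=\left\lfloor x\right\rfloor+l$, together with monotonicity of the floor function, exactly as in the proof of Lemma~\ref{lem:bounds on maxima, >0.5}. Writing $M_{\rho, j+1}-M_{\rho, j}=\left\lfloor\tfrac{j\rho}{s_\rho}+\tfrac{\rho}{s_\rho}\right\rfloor-\left\lfloor\tfrac{j\rho}{s_\rho}\right\rfloor$, I bound below by replacing $\tfrac{\rho}{s_\rho}$ by $\left\lfloor\tfrac{\rho}{s_\rho}\right\rfloor=M_{\rho,1}$ inside the floor (legitimate since $M_{\rho,1}\leqslant\tfrac{\rho}{s_\rho}$ and floor is increasing), giving $M_{\rho, j+1}-M_{\rho, j}\geqslant M_{\rho,1}$; here one should note $M_{\rho,1}=\left\lfloor\tfrac{\rho}{s_\rho}\right\rfloor$, which follows from the definition of $M_{\rho,j}$ at $j=1$. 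For the upper bound, replace $\tfrac{\rho}{s_\rho}$ by $M_{\rho,1}+1>\tfrac{\rho}{s_\rho}$, so that $M_{\rho, j+1}-M_{\rho, j}\leqslant\left\lfloor\tfrac{j\rho}{s_\rho}+M_{\rho,1}+1\right\rfloor-\left\lfloor\tfrac{j\rho}{s_\rho}\right\rfloor=M_{\rho,1}+1$.

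The one point requiring a little care is the hypothesis $j\geqslant 2$: for $j=1$ the claim $M_{\rho,1}\leqslant M_{\rho,1}$ is trivial, and for $j\geqslant 2$ the telescoping argument above applies verbatim, so the stated range $j\in\mathbb{N}\setminus\{0,1\}$ is covered. I do not anticipate a genuine obstacle — the argument is a direct transcription of the $\rho>\tfrac12$ case, the only substantive input being $\tfrac{\rho}{s_\rho}>1$ (equivalently $s_\rho<\rho$), which is immediate from $s_\rho=1-\left\lfloor\tfrac1\rho\right\rfloor\rho$ and $\left\lfloor\tfrac1\rho\right\rfloor\geqslant 1$. If anything, the only bookkeeping subtlety is making sure the additive constants line up correctly when telescoping, which is why I would state and prove the one-step bound as an explicit intermediate claim before concluding.
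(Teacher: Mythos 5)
Your proof is correct and follows essentially the same route as the paper: both establish the one-step estimate $M_{\rho,1}\leqslant M_{\rho,j+1}-M_{\rho,j}\leqslant M_{\rho,1}+1$ by replacing $\tfrac{\rho}{s_\rho}$ inside the floor with $\left\lfloor\tfrac{\rho}{s_\rho}\right\rfloor=M_{\rho,1}$ (resp. $M_{\rho,1}+1$) and then telescope. The only difference is that you spell out the telescoping bookkeeping more explicitly than the paper does.
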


\begin{proof}
We prove following claim 

$$\forall j\in\mathbb{N}\setminus\left\{0\right\}, M_{\rho, 1}\leqslant M_{\rho, j+1}-M_{\rho, j}\leqslant M_{\rho, 1}+1$$

from which the result follows. Let $j\in\mathbb{N}\setminus\left\{0\right\}$. Then we have

$$\begin{aligned}
M_{\rho, j+1}-M_{\rho, j} &= \left\lfloor\tfrac{j\rho}{1-\left\lfloor\tfrac{1}{\rho}\right\rfloor\rho}+\tfrac{\rho}{1-\left\lfloor\tfrac{1}{\rho}\right\rfloor\rho}\right\rfloor-\left\lfloor\tfrac{j\rho}{1-\left\lfloor\tfrac{1}{\rho}\right\rfloor\rho}\right\rfloor \\
&\geqslant \left\lfloor\tfrac{j\rho}{1-\left\lfloor\tfrac{1}{\rho}\right\rfloor\rho}+\left\lfloor\tfrac{\rho}{1-\left\lfloor\tfrac{1}{\rho}\right\rfloor\rho}\right\rfloor\right\rfloor-\left\lfloor\tfrac{j\rho}{1-\left\lfloor\tfrac{1}{\rho}\right\rfloor\rho}\right\rfloor \\
&=M_{\rho, 1}
\end{aligned}$$

and in the same way one obtains 

$$M_{\rho, j+1}-M_{\rho, j}\leqslant M_{\rho, 1}+1$$

Thus the claim we stated at the beginning of the proof holds.
\end{proof}

We conclude with the following

\begin{theorem}[Best diagonal point, $\rho<\tfrac{1}{2}$]\label{thm:best diagonal point, <0.5}
If $\rho<\tfrac{1}{2}$, then 

$$\begin{aligned}\underset{n\in I_\rho}\max\, x_{n, n}^\rho &= x_{N_{\rho, M_{\rho, 1}}', N_{\rho, M_{\rho, 1}}'}^\rho\\
&= \tfrac{M_{\rho, 1}}{2N_{\rho, M_{\rho, 1}}'+1}\\
&= \tfrac{\left\lfloor\tfrac{\rho}{1-\left\lfloor\tfrac{1}{\rho}\right\rfloor\rho}\right\rfloor}{2\left\lfloor\tfrac{\rho}{1-\left\lfloor\tfrac{1}{\rho}\right\rfloor\rho}\right\rfloor\left\lfloor\tfrac{1}{\rho}\right\rfloor+1}
\end{aligned}$$

with 

$$N_{\rho, M_{\rho, 1}}' \overset{\text{def}}=\left\lfloor\tfrac{M_{\rho, 1}}{\rho}\right\rfloor=M_{\rho, 1}\left\lfloor\tfrac{1}{\rho}\right\rfloor \text{ and }M_{\rho, 1}=\left\lfloor\tfrac{\rho}{1-\left\lfloor\tfrac{1}{\rho}\right\rfloor\rho}\right\rfloor$$
\end{theorem}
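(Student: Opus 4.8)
The plan is to mirror the structure of the proof of Theorem \ref{thm:best diagonal point, >0.5}, using the tools already assembled for the case $\rho<\tfrac12$. First I would reduce the problem: by Corollary \ref{coro:x_nn increasing on new partition, <0.5} the sequence $\left(x_{N_{\rho, k}', N_{\rho, k}'}^\rho\right)_{k\in\mathbb{N}\setminus\{0\}}$ is strictly increasing on each block $Y_{\rho, i}$, so its supremum over all of $\mathbb{N}\setminus\{0\}$ can only be attained (if at all) at one of the right endpoints $k=M_{\rho, i}=\max Y_{\rho, i}$. Thus it suffices to compare the values $x_{N_{\rho, M_{\rho, i}}', N_{\rho, M_{\rho, i}}'}^\rho$ over $i\in\mathbb{N}\setminus\{0\}$ and show the maximum is attained at $i=1$, which gives exactly the claimed point since $M_{\rho,1}=\max Y_{\rho,1}$ and, by Proposition \ref{prop:partition, <0.5}, for $k\in Y_{\rho,i}$ one has $x_{N_{\rho, k}', N_{\rho, k}'}^\rho=\tfrac{k}{2k\lfloor1/\rho\rfloor+2i-1}$, so the $i=1$ value is $\tfrac{M_{\rho,1}}{2M_{\rho,1}\lfloor1/\rho\rfloor+1}$, matching the stated formula once we substitute $N_{\rho,M_{\rho,1}}'=M_{\rho,1}\lfloor1/\rho\rfloor$.

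Next I would carry out the comparison. Fix $i\geqslant2$. Writing $q=\lfloor1/\rho\rfloor$ and abbreviating $m_1=M_{\rho,1}$, $m_i=M_{\rho,i}$, the inequality
$$x_{N_{\rho, m_1}', N_{\rho, m_1}'}^\rho\geqslant x_{N_{\rho, m_i}', N_{\rho, m_i}'}^\rho$$
unwinds via Proposition \ref{prop:partition, <0.5} to
$$\tfrac{m_1}{2m_1 q+1}\geqslant\tfrac{m_i}{2m_i q+2i-1},$$
which after cross-multiplying (both denominators positive) becomes $m_1(2m_i q+2i-1)\geqslant m_i(2m_1 q+1)$, i.e. $m_1(2i-1)\geqslant m_i$, i.e. $m_i\leqslant (2i-1)m_1$. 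By Lemma \ref{lem:bounds on maxima, <0.5} we have $M_{\rho, i}\leqslant iM_{\rho,1}+i-1$, and since $M_{\rho,1}\geqslant1$ (because $\rho<\tfrac12$ forces $\tfrac{\rho}{1-\lfloor1/\rho\rfloor\rho}\geqslant1$ — this needs the elementary check that $\lfloor1/\rho\rfloor\rho$ and $\rho$ together satisfy $2\lfloor1/\rho\rfloor\rho\leqslant\lfloor1/\rho\rfloor\rho+\rho$... more simply, $\rho\geqslant 1-\lfloor1/\rho\rfloor\rho=s_\rho$ is exactly $Y_{\rho,1}\ni1$, giving $M_{\rho,1}\geqslant1$), we get $iM_{\rho,1}+i-1\leqslant iM_{\rho,1}+(i-1)M_{\rho,1}=(2i-1)M_{\rho,1}$. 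This is precisely the required bound, so the comparison holds for every $i\geqslant2$, and equality in the block structure shows the supremum over $\mathbb{N}\setminus\{0\}$ equals the $i=1$ value and is attained at $k=M_{\rho,1}$.

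Finally I would record that $M_{\rho,1}\in Y_{\rho,1}$ means $N_{\rho, M_{\rho,1}}'\in I_\rho$, so the maximum is genuinely attained at an admissible index, and then rewrite $N_{\rho,M_{\rho,1}}'$: from the identity $N_{\rho,k}'=k\lfloor1/\rho\rfloor+i-1$ for $k\in Y_{\rho,i}$ (Proposition \ref{prop:partition, <0.5}), taking $k=M_{\rho,1}$ and $i=1$ gives $N_{\rho,M_{\rho,1}}'=M_{\rho,1}\lfloor1/\rho\rfloor$, which also agrees with $\lfloor M_{\rho,1}/\rho\rfloor$ since $M_{\rho,1}\in Y_{\rho,1}$ forces the floor to land there. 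Substituting into $\tfrac{M_{\rho,1}}{2N_{\rho,M_{\rho,1}}'+1}$ and then expanding $M_{\rho,1}=\lfloor\tfrac{\rho}{1-\lfloor1/\rho\rfloor\rho}\rfloor$ yields the three displayed expressions in the theorem statement.

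I expect the only genuine obstacle to be the bookkeeping in the reduction step — making sure that "strictly increasing on each block, hence the sup is at a right endpoint" is applied correctly when there are infinitely many blocks and one must rule out a limiting value exceeding all the endpoint values; but since $x_{N_{\rho, k}', N_{\rho, k}'}^\rho\to\tfrac\rho2$ from the convergence $x_{n,n}^\rho\to\rho/2$ noted at the start of Section \ref{sec:2}, and the $i=1$ endpoint value $\tfrac{m_1}{2m_1 q+1}$ already exceeds $\tfrac{\rho}{2}$ in the same way as Proposition \ref{prop:best diagonal point>rho/2, >0.5} (indeed $\tfrac{m_1}{2m_1q+1}>\tfrac{1}{2q}\cdot\tfrac{2m_1q}{2m_1q+1}$, and $\tfrac{1}{2q}>\tfrac\rho2$), no limiting value can beat it. That observation, together with the finite comparison above, closes the argument.
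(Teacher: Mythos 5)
Your proposal is correct and follows essentially the same route as the paper's proof: reduce via Corollary \ref{coro:x_nn increasing on new partition, <0.5} to the right endpoints $M_{\rho,i}$ of the blocks $Y_{\rho,i}$, then verify $M_{\rho,i}\leqslant(2i-1)M_{\rho,1}$ using Lemma \ref{lem:bounds on maxima, <0.5} together with $M_{\rho,1}\geqslant1$. Your closing worry about "limiting values" is superfluous — once the $i=1$ endpoint dominates every block endpoint, it dominates every element of $I_\rho$ outright — but it does no harm.
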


\begin{proof}
We start with the problem of finding $\underset{n\in I_\rho}\sup\, x_{n, n}^\rho$. Thanks to Proposition \ref{prop:partition, <0.5}, the problem can be solved by finding $\underset{k\in\mathbb{N}\setminus\left\{0\right\}}\sup\, x_{N_{\rho, k}', N_{\rho, k}'}^\rho$. Then, thanks to Corollary \ref{coro:x_nn increasing on new partition, <0.5}, the problem can be reduced to finding $\underset{j\in\mathbb{N}\setminus\left\{0\right\}}\sup\, x_{N_{\rho, M_{\rho, j}}', N_{\rho, M_{\rho, j}}'}^\rho$. Finally, let's prove that this supremum is achieved and $$\underset{j\in\mathbb{N}\setminus\left\{0\right\}}\max\, x_{N_{\rho, M_{\rho, j}}', N_{\rho, M_{\rho, j}}'}^\rho = x_{N_{\rho, M_{\rho, 1}}', N_{\rho, M_{\rho, 1}}'}^\rho.$$ 

Let $j\in\mathbb{N}\setminus\left\{0, 1\right\}$. Since $N_{\rho, M_{\rho, 1}}'\in J_{\rho, M_{\rho, 1}}'$ and $N_{\rho, M_{\rho, j}}'\in J_{\rho, M_{\rho, j}}'$, we have 

$$x_{N_{\rho, M_{\rho, 1}}', N_{\rho, M_{\rho, 1}}'}^\rho=\tfrac{M_{\rho, 1}}{2N_{\rho, M_{\rho, 1}}'+1} \text{ and }x_{N_{\rho, M_{\rho, j}}', N_{\rho, M_{\rho, j}}'}^\rho = \tfrac{M_{\rho, j}}{2N_{\rho, M_{\rho, j}}'+1}$$

Note also, that according to the proof of Proposition \ref{prop:partition, <0.5}, since $M_{\rho, 1}\in Y_{\rho, 1}$ and $M_{\rho, j}\in Y_{\rho, j}$, we have

$$N_{\rho, M_{\rho, 1}}'=M_{\rho, 1}\left\lfloor\tfrac{1}{\rho}\right\rfloor \text{ and }N_{\rho, M_{\rho, j}}'=M_{\rho, j}\left\lfloor\tfrac{1}{\rho}\right\rfloor+j-1$$

Therefore 

$$\begin{aligned}
&x_{N_{\rho, M_{\rho, 1}}', N_{\rho, M_{\rho, 1}}'}^\rho\geqslant x_{N_{\rho, M_{\rho, j}}', N_{\rho, M_{\rho, j}}'}^\rho \\
\iff& M_{\rho, j}\left(2M_{\rho, 1}\left\lfloor\tfrac{1}{\rho}\right\rfloor+1\right)\leqslant M_{\rho, 1}\left(2M_{\rho, j}\left\lfloor\tfrac{1}{\rho}\right\rfloor+2j-1\right) \\
\iff& M_{\rho, j}\leqslant(2j-1)M_{\rho, 1}
\end{aligned}$$

which is true because $M_{\rho, 1}\geqslant1$ and according to Lemma \ref{lem:bounds on maxima, <0.5}. This proves the theorem.

\end{proof}

This leads to the following.

\begin{proposition}\label{prop:best diagonal point>rho/2, <0.5}
One has 

$$\tfrac{3}{5}\rho>\tfrac{\left\lfloor\tfrac{\rho}{1-\left\lfloor\tfrac{1}{\rho}\right\rfloor\rho}\right\rfloor}{2\left\lfloor\tfrac{\rho}{1-\left\lfloor\tfrac{1}{\rho}\right\rfloor\rho}\right\rfloor\left\lfloor\tfrac{1}{\rho}\right\rfloor+1}>\tfrac{1}{2}\rho$$
\end{proposition}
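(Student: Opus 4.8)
The strategy mirrors the proof of Proposition \ref{prop:best diagonal point>rho/2, >0.5}, but now the controlling integer parameter is $k:=\left\lfloor\tfrac{1}{\rho}\right\rfloor\geqslant 2$, and we must additionally understand $M_{\rho,1}=\left\lfloor\tfrac{\rho}{1-k\rho}\right\rfloor=\left\lfloor\tfrac{\rho}{s_\rho}\right\rfloor$. First I would fix $k\geqslant 2$ and note that $\left\lfloor\tfrac{1}{\rho}\right\rfloor=k$ is equivalent to $\rho\in\left(\tfrac{1}{k+1},\tfrac{1}{k}\right)$. On this interval $s_\rho=1-k\rho\in(0,1-\tfrac{k}{k+1})=(0,\tfrac{1}{k+1})$ varies, and $\tfrac{\rho}{s_\rho}$ is a continuous strictly monotone function of $\rho$ there (as $\rho\to\tfrac{1}{k}^-$, $s_\rho\to0^+$ so $\tfrac{\rho}{s_\rho}\to+\infty$; as $\rho\to\tfrac{1}{k+1}^+$, $s_\rho\to\tfrac{1}{k+1}^-$ so $\tfrac{\rho}{s_\rho}\to1^-$), so $M_{\rho,1}$ is not a single constant on the whole interval. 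This is the genuine new feature compared with the $\rho>\tfrac12$ case.

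So the cleaner approach is to re-parametrize by $M:=M_{\rho,1}\in\mathbb{N}\setminus\{0\}$ and express the target quantity purely as $f(\rho):=\dfrac{M}{2Mk+1}$ with $N'_{\rho,M_{\rho,1}}=Mk$. For the lower bound $f(\rho)>\tfrac12\rho$, it suffices to show $2M>\rho(2Mk+1)$, i.e. $2M(1-k\rho)>\rho$, i.e. $M s_\rho>\tfrac{\rho}{2}$; and since $M=\left\lfloor\tfrac{\rho}{s_\rho}\right\rfloor>\tfrac{\rho}{s_\rho}-1$ (strict, as $\tfrac{\rho}{s_\rho}\notin\mathbb{Z}$ here), we get $M s_\rho>\rho-s_\rho$, so it is enough that $\rho-s_\rho\geqslant\tfrac{\rho}{2}$, i.e. $s_\rho\leqslant\tfrac{\rho}{2}$, i.e. $1-k\rho\leqslant\tfrac{\rho}{2}$, i.e. $\rho\geqslant\tfrac{2}{2k+1}$. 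For $k\geqslant 2$ the interval endpoint $\tfrac{1}{k+1}\geqslant\tfrac{2}{2k+1}$ fails only barely, so this clean bound handles most of the interval; the remaining small sub-interval $\rho\in(\tfrac{1}{k+1},\tfrac{2}{2k+1})$ must be treated separately, and there one checks that $M_{\rho,1}=1$ (since $\tfrac{\rho}{s_\rho}<\tfrac{2/(2k+1)}{1-2k/(2k+1)}=2$), reducing the inequality to $\tfrac{1}{2k+1}>\tfrac{\rho}{2}$, i.e. $\rho<\tfrac{2}{2k+1}$, which is exactly the defining condition of that sub-interval. For the upper bound $f(\rho)<\tfrac{3}{5}\rho$, one argues dually: $5M<3\rho(2Mk+1)=3(2Mk\rho+\rho)$, i.e. $M(5-6k\rho)<3\rho$; using $6k\rho=6(1-s_\rho)$ this is $M(6s_\rho-1)<3\rho$. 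When $s_\rho\leqslant\tfrac16$ the left side is $\leqslant 0<3\rho$ and we are done; when $s_\rho>\tfrac16$ use $M\leqslant\tfrac{\rho}{s_\rho}$ to reduce to $\rho\cdot\tfrac{6s_\rho-1}{s_\rho}<3\rho$, i.e. $6s_\rho-1<3s_\rho$, i.e. $s_\rho<\tfrac13$, which holds since $s_\rho<\tfrac{1}{k+1}\leqslant\tfrac13$.

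The only real obstacle is bookkeeping the boundary sub-case for the lower bound where the "clean" estimate $s_\rho\leqslant\rho/2$ narrowly fails; everything else is routine floor-function manipulation using the equivalence $\left\lfloor\tfrac1\rho\right\rfloor=k\iff\rho\in(\tfrac{1}{k+1},\tfrac1k)$ and the strictness $\tfrac{\rho}{s_\rho}\notin\mathbb{Z}$ (which holds because $\tfrac{\rho}{1-k\rho}\in\mathbb{Z}$ would force $\rho$ rational of a form incompatible with irrationality of $\rho$). I would also remark, as after Proposition \ref{prop:best diagonal point>rho/2, >0.5}, that the constants $\tfrac12$ and $\tfrac35$ are optimal: $\tfrac12$ is approached as $\rho\to\tfrac1k^-$ with $M_{\rho,1}\to\infty$ (so $f(\rho)\to\tfrac{1}{2k}=\tfrac\rho2$ in the limit), and $\tfrac35$ is approached along a sequence where $s_\rho\to\tfrac13^-$ with $M_{\rho,1}=1$.
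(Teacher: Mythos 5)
Your argument is correct and is essentially the paper's: both proofs reduce the claim to the parametrization $k=\left\lfloor\tfrac{1}{\rho}\right\rfloor$, $l=M_{\rho,1}$ and elementary floor-function estimates, the only difference being that the paper packages your two case splits into a single step for each bound by observing that $M_{\rho,1}=l$ exactly when $\rho\in\left(\tfrac{l}{lk+1},\tfrac{l+1}{(l+1)k+1}\right)$ and then comparing the constant value $\tfrac{l}{2kl+1}$ with $\tfrac{1}{2}$ (resp.\ $\tfrac{3}{5}$) times the appropriate endpoint of that interval. One harmless slip in your motivating aside: as $\rho\to\tfrac{1}{k+1}^{+}$ the ratio $\rho/s_\rho$ tends to $1$ from above, not from below, which is consistent with your later (correct) use of $M_{\rho,1}\geqslant 1$.
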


\begin{proof}
Let $k\in\mathbb{N}\setminus\left\{0, 1\right\}$ and $l\in\mathbb{N}\setminus\left\{0\right\}$. One has

$$\left\lfloor\tfrac{1}{\rho}\right\rfloor = k \iff \rho\in\left(\tfrac{1}{k+1}, \tfrac{1}{k}\right)$$

and if this condition is fulfilled, one has

$$\left\lfloor\tfrac{\rho}{1-k\rho}\right\rfloor=l \iff \rho\in\left(\tfrac{l}{lk+1}, \tfrac{l+1}{(l+1)k+1}\right)$$

Then if $\rho\in\left(\tfrac{1}{k+1}, \tfrac{1}{k}\right)\cap\left(\tfrac{l}{lk+1}, \tfrac{l+1}{(l+1)k+1}\right)$, one has 

$$\tfrac{\left\lfloor\tfrac{\rho}{1-\left\lfloor\tfrac{1}{\rho}\right\rfloor\rho}\right\rfloor}{2\left\lfloor\tfrac{\rho}{1-\left\lfloor\tfrac{1}{\rho}\right\rfloor\rho}\right\rfloor\left\lfloor\tfrac{1}{\rho}\right\rfloor+1} = \tfrac{l}{2kl+1} \geqslant \tfrac{1}{2}\tfrac{l+1}{(l+1)k+1}>\tfrac{1}{2}\rho$$

and 

$$\tfrac{l}{2kl+1} \leqslant \tfrac{3}{5}\tfrac{l}{lk+1}<\tfrac{3}{5}\rho$$
\end{proof}

Note that the factors $\tfrac{1}{2}$ and $\tfrac{3}{5}$ are optimal, since they can be approached arbitrarily close as $\rho$ goes to $\tfrac{1}{2}$ from below (respectively to $\tfrac{1}{3}$ from above).

\begin{figure}[!h]
\begin{center}
\includegraphics[width=11cm]{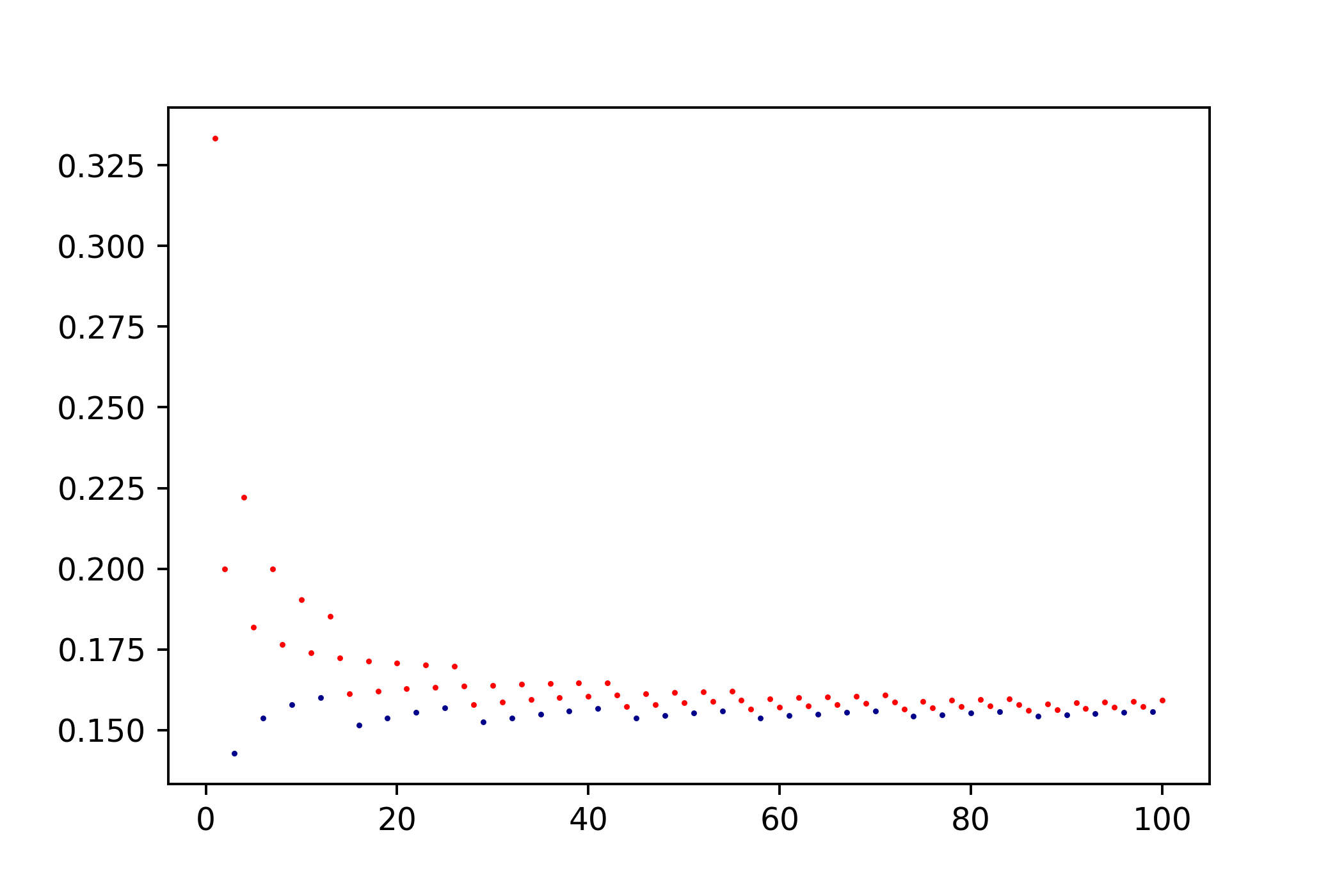}
\end{center}
    \vskip-1cm\caption{First 100 terms of sequence $(x_{n, n}^\rho)_{n \in \mathbb{N}\setminus\left\{0\right\}}$ for $\rho=0.31+\pi.10^{-5}$. Red points are such that $n\notin I_\rho$ and blue ones are such that $n\in I_\rho$.}
\end{figure}

\section{Is the best diagonal point an extreme point ?}\label{sec:3}

\begin{definition}\label{def:best diagonal point}
We denote 

$$D_\rho:=(d_\rho, d_\rho),$$

the \textit{best diagonal point} obtained in Theorems \ref{thm:best diagonal point, >0.5} and \ref{thm:best diagonal point, <0.5}.
\end{definition}

The next step consists in determining whether this \textit{best diagonal point} is an extreme point of $\Lambda_\rho$ or not (we already know that it is the only candidate left among points on the diagonal). For some values of $\rho$, in order to prove that $D_\rho$ is an extreme point of $\Lambda_\rho$, we will prove that all points of $L_\rho$ except $D_\rho$ lie strictly under straight line $x+y=2d_\rho$ ($D_\rho$ belongs to that line). For other values of $\rho$, we will prove that some non-diagonal points of $L_\rho$ belong to the line $x+y=2d_\rho$. Therefore in this case, point $D_\rho$ is not an extreme point of $\Lambda_\rho$.

\subsection{Case $\rho>\frac{1}{2}$}

In this subsection, we assume that $\rho>\tfrac{1}{2}$. Let's remind the reader about the following formulas, coming from Proposition \ref{prop:partition, >0.5}. For all $k, l\in\mathbb{N}$ and $m\in I_{\rho, k}, n\in I_{\rho, l}$ we have 

$$\begin{aligned}
x_{m, n}^\rho &= \tfrac{m-k}{m+n+1} \\
y_{m, n}^\rho &= \tfrac{n-l}{m+n+1} \\
\text{hence }x_{m, n}^\rho+y_{m, n}^\rho &= \tfrac{m+n-k-l}{m+n+1}
\end{aligned}$$

Therefore we have the following

\begin{lemma}\label{lem:x_mn+y_mn increasing, >0.5}
For any $k, l\in\mathbb{N}$, sequence $\left(x_{m, n}^\rho+y_{m, n}^\rho\right)_{(m, n)\in I_\rho\times I_\rho}$ is strictly increasing with respect to its first (respectively second) index on the tile $I_{\rho, k}\times I_{\rho, l}$. 
\end{lemma}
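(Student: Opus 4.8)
The plan is to prove monotonicity in the first index by fixing an arbitrary pair $k,l\in\mathbb{N}$, an arbitrary $n\in I_{\rho,l}$, and showing that $m\mapsto x_{m,n}^\rho+y_{m,n}^\rho = \frac{m+n-k-l}{m+n+1}$ is strictly increasing for $m$ ranging over $I_{\rho,k}$. By symmetry of the formula under swapping $(m,k)\leftrightarrow(n,l)$ the monotonicity in the second index will follow identically, so I only need to treat the first.

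First I would observe that on the tile $I_{\rho,k}\times I_{\rho,l}$ the value $x_{m,n}^\rho+y_{m,n}^\rho$ depends on $m$ only through the function
\[
g(t)=\frac{t+n-k-l}{t+n+1}=1-\frac{k+l+1}{t+n+1},
\]
where I substitute $t=m$. Since $k,l\in\mathbb{N}$ we have $k+l+1>0$, and $t+n+1>0$ for all admissible $t$, so $g$ is strictly increasing in $t$: indeed $g'(t)=\frac{k+l+1}{(t+n+1)^2}>0$, or equivalently, for $m<m'$ in $I_{\rho,k}$ one has $\frac{k+l+1}{m+n+1}>\frac{k+l+1}{m'+n+1}$, hence $g(m)<g(m')$. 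Because all $m\in I_{\rho,k}$ are positive integers (by Proposition~\ref{prop:partition, >0.5}, $I_{\rho,k}=\llbracket N_{\rho,k-1}+2,N_{\rho,k}\rrbracket\subset\mathbb{N}\setminus\{0\}$) and likewise $n\ge 1$, there is no issue with the denominator vanishing, and strict monotonicity over the integer points of $I_{\rho,k}$ is immediate.

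The only genuinely substantive point — and hence the step I would be most careful about — is invoking the correct closed-form expressions: the identity $x_{m,n}^\rho+y_{m,n}^\rho=\frac{m+n-k-l}{m+n+1}$ is valid precisely because $(m,n)\in I_{\rho,k}\times I_{\rho,l}$, which forces $\alpha_m^\rho=m-k-m\rho$ and $\alpha_n^\rho=n-l-n\rho$, i.e. $\lceil m\rho\rceil=m-k$ and $\lceil n\rho\rceil=n-l$ via Proposition~\ref{prop:partition, >0.5}. Once this is in place the rest is the elementary observation above. I would therefore state the proof as: fix $k,l$ and $n\in I_{\rho,l}$; for $m,m'\in I_{\rho,k}$ with $m<m'$ apply the formula to both, rewrite each as $1-\frac{k+l+1}{\cdot+n+1}$, and conclude $x_{m,n}^\rho+y_{m,n}^\rho<x_{m',n}^\rho+y_{m',n}^\rho$; then note the argument for the second index is identical by the symmetry $x_{m,n}^\rho+y_{m,n}^\rho=x_{n,m}^\rho+y_{n,m}^\rho$ together with $I_{\rho,l}\ni n$.
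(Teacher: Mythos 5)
Your proof is correct and matches the paper's (implicit) argument: the paper states the lemma as an immediate consequence of the displayed identity $x_{m,n}^\rho+y_{m,n}^\rho=\tfrac{m+n-k-l}{m+n+1}$ from Proposition~\ref{prop:partition, >0.5}, and your rewriting as $1-\tfrac{k+l+1}{m+n+1}$ is exactly the observation that makes the monotonicity evident. Nothing is missing.
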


\begin{corollary}\label{coro:maxima x_mn+y_mn, >0.5}
Let $k, l\in\mathbb{N}$. Then 

$$\underset{\substack{m\in I_{\rho, k} \\ n\in I_{\rho, l} \\ m\neq n}}\max \left(x_{m, n}^\rho+y_{m, n}^\rho\right) = \left\{
\begin{aligned}
\tfrac{2N_{\rho, k}-1-2k}{2N_{\rho, k}} &\text{ if }k=l \\
\tfrac{N_{\rho, k}+N_{\rho, l}-k-l}{N_{\rho, k}+N_{\rho, l}+1} &\text{ if }k\neq l
\end{aligned}\right.$$

\noindent More precisely, the maximum is attained respectively only for couples of indices \newline $\left(N_{\rho, k}, N_{\rho, k}-1\right), \left(N_{\rho, k}-1, N_{\rho, k}\right)$ in the first case and $\left(N_{\rho, k}, N_{\rho, l}\right)$ in the second case.
\end{corollary}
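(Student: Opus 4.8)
The plan is to reduce the whole statement to the single elementary observation, already implicit in Proposition \ref{prop:partition, >0.5}, that on the tile $I_{\rho, k}\times I_{\rho, l}$ one has
$$x_{m, n}^\rho+y_{m, n}^\rho=\tfrac{m+n-k-l}{m+n+1}=1-\tfrac{k+l+1}{m+n+1},$$
which is a strictly increasing function of the single quantity $m+n$; Lemma \ref{lem:x_mn+y_mn increasing, >0.5} is precisely the coordinatewise form of this monotonicity. Hence maximizing $x_{m, n}^\rho+y_{m, n}^\rho$ over the tile, momentarily forgetting the constraint $m\neq n$, is the same as maximizing $m+n$ over $I_{\rho, k}\times I_{\rho, l}$, whose unique solution is $(m,n)=(N_{\rho, k},N_{\rho, l})$ because $N_{\rho, k}=\max I_{\rho, k}$ and $N_{\rho, l}=\max I_{\rho, l}$ by Proposition \ref{prop:partition, >0.5}.

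First I would treat the case $k\neq l$. Since $I_{\rho, k}$ and $I_{\rho, l}$ are two distinct blocks of the partition of $I_\rho$ furnished by Proposition \ref{prop:partition, >0.5}, they are disjoint, so $N_{\rho, k}\neq N_{\rho, l}$. Thus the unconstrained maximizer $(N_{\rho, k},N_{\rho, l})$ already satisfies $m\neq n$, and by the strict monotonicity it is the unique maximizer under the constraint as well; substituting its coordinates yields the stated value $\tfrac{N_{\rho, k}+N_{\rho, l}-k-l}{N_{\rho, k}+N_{\rho, l}+1}$.

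Next I would treat the case $k=l$, where the unconstrained maximizer $(N_{\rho, k},N_{\rho, k})$ is now excluded. For any admissible pair $(m,n)\in I_{\rho, k}^2$ with $m\neq n$ one has $\max(m,n)\leqslant N_{\rho, k}$ and $\min(m,n)\leqslant N_{\rho, k}-1$, hence $m+n\leqslant 2N_{\rho, k}-1$, with equality exactly when $\{m,n\}=\{N_{\rho, k},N_{\rho, k}-1\}$. As $x_{m, n}^\rho+y_{m, n}^\rho$ is strictly increasing in $m+n$, the maximum is attained precisely at the two pairs $(N_{\rho, k},N_{\rho, k}-1)$ and $(N_{\rho, k}-1,N_{\rho, k})$, with common value $\tfrac{2N_{\rho, k}-1-2k}{2N_{\rho, k}}$.

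I do not expect a serious obstacle here: the corollary is essentially bookkeeping built on Lemma \ref{lem:x_mn+y_mn increasing, >0.5} and the explicit endpoints of the blocks $I_{\rho, k}$. The only point that deserves a moment's care is the case $k=l$, where one must check that removing the single diagonal corner lowers the attainable value of $m+n$ by exactly one and then identify the two resulting maximizers; here one should also keep in mind that $N_{\rho, k}-1$ belongs to $I_{\rho, k}$ only when $\text{Card}\,I_{\rho, k}\geqslant 2$, the singleton case being vacuous since then there is no admissible pair with $m\neq n$.
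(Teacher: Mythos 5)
Your proposal is correct and follows essentially the same route as the paper, which states this corollary as an immediate consequence of Lemma \ref{lem:x_mn+y_mn increasing, >0.5} together with $\max I_{\rho,k}=N_{\rho,k}$ from Proposition \ref{prop:partition, >0.5}; your reformulation via monotonicity in $m+n$ is just a compact packaging of the same coordinatewise monotonicity. Your remark that the diagonal case is vacuous when $I_{\rho,k}$ is a singleton is a legitimate caveat that the paper's statement silently omits.
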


\begin{proposition}\label{prop:x+y<2d, >2/3}
Suppose $\rho>\tfrac{2}{3}$. For any $m, n\in I_\rho$ such that $m\neq n$, we have 

$$x_{m, n}^\rho+y_{m, n}^\rho<2d_\rho$$
\end{proposition}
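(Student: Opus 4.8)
The plan is to reduce the claim, via Corollary~\ref{coro:maxima x_mn+y_mn, >0.5}, to checking two families of scalar inequalities in the integers $N_{\rho,k}$, and then to dispatch them using the bounds from Lemma~\ref{lem:bounds on maxima, >0.5} together with the standing hypothesis $\rho>\tfrac23$, which forces $N_{\rho,0}=\left\lfloor\tfrac{1}{1-\rho}\right\rfloor-1\geqslant 2$. Write $N:=N_{\rho,0}$ and recall $d_\rho=\tfrac{N}{2N+1}$, so $2d_\rho=\tfrac{2N}{2N+1}$. By Lemma~\ref{lem:x_mn+y_mn increasing, >0.5} and Corollary~\ref{coro:maxima x_mn+y_mn, >0.5}, for $m\neq n$ the quantity $x_{m,n}^\rho+y_{m,n}^\rho$ is maximized, over a tile $I_{\rho,k}\times I_{\rho,l}$, at the corner indices, with value $\tfrac{2N_{\rho,k}-1-2k}{2N_{\rho,k}}$ if $k=l$ and $\tfrac{N_{\rho,k}+N_{\rho,l}-k-l}{N_{\rho,k}+N_{\rho,l}+1}$ if $k\neq l$. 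Hence it suffices to show each of these is $<\tfrac{2N}{2N+1}$ for all $k,l\in\mathbb{N}$.

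First I would handle the off-diagonal tiles $k\neq l$. By symmetry assume $k<l$; abbreviate $P=N_{\rho,k}$, $Q=N_{\rho,l}$. The inequality $\tfrac{P+Q-k-l}{P+Q+1}<\tfrac{2N}{2N+1}$ is equivalent, after clearing denominators, to $(2N+1)(k+l)+2N>P+Q$, i.e. $P+Q<(2N+1)(k+l)+2N$. By Lemma~\ref{lem:bounds on maxima, >0.5}, $N_{\rho,j}\leqslant j(N+2)+N$ for all $j$, so $P+Q\leqslant (k+l)(N+2)+2N$; it therefore suffices that $(k+l)(N+2)\leqslant (2N+1)(k+l)$, which is immediate since $N+2\leqslant 2N+1$ (as $N\geqslant 1$) and $k+l\geqslant 1$ when $k\neq l$. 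So the off-diagonal tiles cause no trouble, and in fact only need $\rho>\tfrac12$.

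Next the diagonal tiles $k=l$: I must show $\tfrac{2N_{\rho,k}-1-2k}{2N_{\rho,k}}<\tfrac{2N}{2N+1}$, equivalently $(2N+1)(2N_{\rho,k}-1-2k)<2N\cdot 2N_{\rho,k}$, i.e. $2N_{\rho,k}<(2N+1)(1+2k)$, i.e. $N_{\rho,k}\leqslant N(1+2k)+k$ (strict is automatic once we have $\leqslant$ because the left side is an integer and we can check the bound is not tight, or simply keep the strict inequality $2N_{\rho,k}<(2N+1)(2k+1)$ directly). Again by Lemma~\ref{lem:bounds on maxima, >0.5}, $N_{\rho,k}\leqslant k(N+2)+N = N(2k+1)+k\cdot\tfrac{?}{}$—more precisely $k(N+2)+N$—so I need $2\big(k(N+2)+N\big)<(2N+1)(2k+1)=2N(2k+1)+2k+1$, i.e. $2k(N+2)+2N<4Nk+2N+2k+1$, i.e. $2k(N+2)<4Nk+2k+1$, i.e. $2kN+4k<4Nk+2k+1$, i.e. $2k<2Nk+1$, which holds for all $k\geqslant 0$ since $N\geqslant 1$. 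This is exactly where the case $\rho>\tfrac23$ (hence $N\geqslant 2$, though $N\geqslant 1$ already suffices here) is used implicitly through $N=N_{\rho,0}\geqslant 1$; note that for $\tfrac12<\rho\leqslant\tfrac23$ one has $N=1$ and the diagonal corner with $k=0$ gives value $\tfrac{2N-1}{2N}=\tfrac12=2d_\rho$, so the inequality would fail — which is precisely why the hypothesis $\rho>\tfrac23$ is needed and why the next section treats $\tfrac12<\rho\leqslant\tfrac23$ separately.

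The main obstacle, such as it is, is bookkeeping: one must be careful that Corollary~\ref{coro:maxima x_mn+y_mn, >0.5} already excludes $m=n$ within a tile and that the supremum over all of $I_\rho\times I_\rho\setminus\Delta$ is realized on some tile (it is, since every index lies in exactly one $I_{\rho,k}$, and within each tile the max is the stated corner value), so that taking the supremum of the corner values over $k,l$ genuinely bounds $x_{m,n}^\rho+y_{m,n}^\rho$; the arithmetic above then closes the estimate uniformly in $k,l$ and $\rho$.
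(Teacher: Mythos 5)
Your overall strategy is exactly the paper's: reduce to the corner values of Corollary~\ref{coro:maxima x_mn+y_mn, >0.5} and close the estimate with Lemma~\ref{lem:bounds on maxima, >0.5}. But you have misplaced where the hypothesis $\rho>\tfrac23$ is actually used, and as a result your off-diagonal case does not prove the strict inequality.

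In the off-diagonal case you need $P+Q<(2N+1)(k+l)+2N$ and you have $P+Q\leqslant (k+l)(N+2)+2N$; from these, the \emph{non-strict} comparison $(k+l)(N+2)\leqslant(2N+1)(k+l)$ that you declare sufficient only yields $x_{m,n}^\rho+y_{m,n}^\rho\leqslant 2d_\rho$. To get strictness you need $(k+l)(N+2)<(2N+1)(k+l)$, i.e.\ $N+2<2N+1$, i.e.\ $N=N_{\rho,0}\geqslant 2$ --- which is precisely what $\rho>\tfrac23$ provides and is exactly how the paper argues. Your parenthetical claim that the off-diagonal tiles ``only need $\rho>\tfrac12$'' is false: for $\rho\in\left(\tfrac35,\tfrac23\right)$ equality $x_{N_{\rho,k},N_{\rho,l}}^\rho+y_{N_{\rho,k},N_{\rho,l}}^\rho=2d_\rho$ genuinely occurs on off-diagonal tiles (this is the content of Proposition~\ref{prop:x+y<2d, 2/3>rho>0.5} and Theorem~\ref{thm:is the best diagonal point an extreme point ?, >0.5}).

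Symmetrically, your claim that the diagonal case is where $\rho>\tfrac23$ enters, and that for $N=1$ the corner $k=l=0$ gives $\tfrac{2N-1}{2N}=\tfrac12=2d_\rho$, rests on a miscalculation: $2d_\rho=\tfrac{2N}{2N+1}=\tfrac23$ when $N=1$, not $\tfrac12$, and indeed $\tfrac{2N-1}{2N}<\tfrac{2N}{2N+1}$ for every $N\geqslant1$. Your own arithmetic for the diagonal tiles (reducing to $2k<2Nk+1$) correctly shows that case holds for all $\rho>\tfrac12$, which is why the paper reuses it verbatim in the regime $\rho<\tfrac23$. The fix is one line --- invoke $N_{\rho,0}\geqslant2$ in the off-diagonal case --- but as written the argument both fails to establish strictness where it is needed and attributes the role of the hypothesis to the wrong case.
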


\begin{proof}
Let $k, l\in\mathbb{N}$. It is enough to show 

$$\underset{\substack{m\in I_{\rho, k} \\ n\in I_{\rho, l} \\ m\neq n}}\max \left(x_{m, n}^\rho+y_{m, n}^\rho\right)<2d_\rho = \tfrac{2N_{\rho, 0}}{2N_{\rho, 0}+1}$$

\begin{itemize}

\item If $k\neq l$, according to Corollary \ref{coro:maxima x_mn+y_mn, >0.5} we have 

$$\begin{aligned}
&\underset{\substack{m\in I_{\rho, k} \\ n\in I_{\rho, l} \\ m\neq n}}\max \left(x_{m, n}^\rho+y_{m, n}^\rho\right)<2d_\rho \\
\iff& \left(2N_{\rho, 0}+1\right)\left(N_{\rho, k}+N_{\rho, l}-k-l\right)<2N_{\rho, 0}\left(N_{\rho, k}+N_{\rho, l}+1\right) \\
\iff& N_{\rho, k}+N_{\rho, l}<(k+l)\left(2N_{\rho, 0}+1\right)+2N_{\rho, 0}
\end{aligned}$$

But according to Lemma \ref{lem:bounds on maxima, >0.5}, we have 

$$\begin{aligned}
N_{\rho, k}+N_{\rho, l}&\leqslant (k+l)\left(N_{\rho, 0}+2\right)+2N_{\rho, 0} \\
&<(k+l)\left(2N_{\rho, 0}+1\right)+2N_{\rho, 0} \text{ because }N_{\rho, 0}\geqslant 2 \text{ since }\rho>\tfrac{2}{3} \\
\end{aligned}$$

\item If $k=l$, according to Corollary \ref{coro:maxima x_mn+y_mn, >0.5} we have

$$\begin{aligned}
&\underset{\substack{m\in I_{\rho, k} \\ n\in I_{\rho, l} \\ m\neq n}}\max \left(x_{m, n}^\rho+y_{m, n}^\rho\right)<2d_\rho \\
\iff& \left(2N_{\rho, k}-1-2k\right)\left(2N_{\rho, 0}+1\right)<4N_{\rho, 0}N_{\rho, k} \\
\iff& 2N_{\rho, k}<(1+2k)\left(2N_{\rho, 0}+1\right)
\end{aligned}$$

But according to Lemma \ref{lem:bounds on maxima, >0.5}, we have 

$$\begin{aligned}
2N_{\rho, k}&\leqslant 2k\left(N_{\rho, 0}+2\right)+2N_{\rho, 0} \\
&<(1+2k)\left(2N_{\rho, 0}+1\right) \\
\end{aligned}$$

\end{itemize}

Therefore, the initial claim is true.

\end{proof}

From the fact that for any $k\in\mathbb{N}$, we have 

$$N_{\rho, 0}+1\leqslant N_{\rho, k+1}-N_{\rho, k}\leqslant N_{\rho, 0}+2$$

we obtain the following 

\begin{lemma}\label{lem:equality case in bounds on maxima lemma, >0.5}
Suppose $\rho<\tfrac{2}{3}$. Integer sequence 

$$\begin{aligned}
u_{\rho, k}&:=k\left(N_{\rho, 0}+2\right)+N_{\rho, 0}-N_{\rho, k} \\
&\phantom{:}=3k+1-N_{\rho, k} \text{ since }N_{\rho, 0}=1
\end{aligned}$$ 

is increasing, with $u_{\rho, 0}=0$. Moreover we have

$$\max\left\{k\in\mathbb{N}:u_{\rho, k}=0\right\}=\left\lfloor\tfrac{2\rho-1}{2-3\rho}\right\rfloor$$
\end{lemma}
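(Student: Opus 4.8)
The plan is to analyze the integer sequence $u_{\rho,k} = 3k+1 - N_{\rho,k}$ (using $N_{\rho,0}=1$, valid since $\tfrac12<\rho<\tfrac23$ forces $\lfloor\tfrac{1}{1-\rho}\rfloor=2$). First I would establish monotonicity: from the displayed bound $N_{\rho,0}+1\le N_{\rho,k+1}-N_{\rho,k}\le N_{\rho,0}+2$ (i.e. $2\le N_{\rho,k+1}-N_{\rho,k}\le 3$), the increment $u_{\rho,k+1}-u_{\rho,k} = 3 - (N_{\rho,k+1}-N_{\rho,k})$ lies in $\{0,1\}$, so $(u_{\rho,k})_k$ is nondecreasing, and clearly $u_{\rho,0}=3\cdot 0+1-N_{\rho,0}=0$. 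Since the increments are $0$ or $1$, the set $\{k\in\mathbb N: u_{\rho,k}=0\}$ is an initial segment $\llbracket 0, k^*\rrbracket$, and $k^*$ is exactly the largest $k$ with $N_{\rho,k}=3k+1$.

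Next I would compute $k^*$ explicitly. Recall $N_{\rho,k}=\left\lfloor\tfrac{k+1}{1-\rho}\right\rfloor-1$, so $N_{\rho,k}=3k+1$ is equivalent to $\left\lfloor\tfrac{k+1}{1-\rho}\right\rfloor = 3k+2$, i.e. $3k+2 \le \tfrac{k+1}{1-\rho} < 3k+3$. Writing $t=\tfrac{1}{1-\rho}\in(2,3)$ (since $\tfrac12<\rho<\tfrac23$), this reads $3k+2 \le (k+1)t < 3k+3$; the right inequality $(k+1)t < 3(k+1)$ holds automatically because $t<3$, so the condition collapses to the single inequality $3k+2 \le (k+1)t$, i.e. $k(t-3) \ge 2-t$, i.e. $k(3-t)\le t-2$, i.e. $k \le \tfrac{t-2}{3-t}$ (note $3-t>0$). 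Substituting back $t-2 = \tfrac{2\rho-1}{1-\rho}$ and $3-t = \tfrac{2-3\rho}{1-\rho}$ gives $k \le \tfrac{2\rho-1}{2-3\rho}$. Hence $k^* = \left\lfloor\tfrac{2\rho-1}{2-3\rho}\right\rfloor$, which is the claimed formula. I should also note $\tfrac{2\rho-1}{2-3\rho}$ is not an integer — $\tfrac{k+1}{1-\rho}\notin\mathbb Z$ since $\rho$ is irrational — so strict versus non-strict inequality at the endpoint is immaterial, but it is worth a remark for cleanliness.

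The only subtle point is making sure the equivalence chain is reversible and that no boundary case is lost: one must check that when $k>k^*$ one genuinely has $N_{\rho,k}\ge 3k+2$ (not something smaller), which follows because $u_{\rho,k}$ is nondecreasing and equals the \emph{gap below the maximum}, so once it leaves $0$ it stays positive, meaning $N_{\rho,k}\le 3k$ for $k>k^*$; combined with the elementary lower bound $N_{\rho,k}\ge k(N_{\rho,0}+1)+N_{\rho,0}=2k+1$ from Lemma \ref{lem:bounds on maxima, >0.5}, everything is consistent. I do not expect a genuine obstacle here; the main care is bookkeeping with floors and the sign of $3-t$, and confirming that the right-hand inequality in $3k+2 \le (k+1)t < 3k+3$ is automatically satisfied so that the two-sided floor condition reduces to a one-sided bound on $k$.
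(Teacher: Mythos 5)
Your proposal is correct and follows essentially the same route as the paper: the paper likewise reduces $u_{\rho,k}=0$ to $\left\lfloor\tfrac{k+1}{1-\rho}\right\rfloor=3k+2$, i.e.\ $3k+2<\tfrac{k+1}{1-\rho}<3k+3$, and solves this to get $-1<k<\tfrac{2\rho-1}{2-3\rho}$, while the monotonicity of $(u_{\rho,k})_k$ is dispatched exactly as you do, from $N_{\rho,0}+1\leqslant N_{\rho,k+1}-N_{\rho,k}\leqslant N_{\rho,0}+2$. Your extra observations (that the increments lie in $\{0,1\}$ so the zero set is an initial segment, and that $\tfrac{2\rho-1}{2-3\rho}\notin\mathbb{Z}$ by irrationality of $\rho$) are correct and only make explicit what the paper leaves implicit.
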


Note that 

$$\left\lfloor\tfrac{2\rho-1}{2-3\rho}\right\rfloor=0\iff\rho<\tfrac{3}{5}$$

\begin{proof}
Let's prove the second claim. For any $k\in\mathbb{N}$, we have 

$$\begin{aligned}
&u_{\rho, k}=0 \\
\iff& \left\lfloor\tfrac{k+1}{1-\rho}\right\rfloor=3k+2 \\
\iff& 3k+2<\tfrac{k+1}{1-\rho}<3k+3 \\
\iff& -1<k<\tfrac{2\rho-1}{2-3\rho}
\end{aligned}$$
\end{proof}

The sequence $\left(u_{\rho, k}\right)_{k\in\mathbb{N}}$ appears naturally because we will need to know whether or not equality can occur in the right inequality of Lemma \ref{lem:bounds on maxima, >0.5}.

\begin{proposition}\label{prop:x+y<2d, 2/3>rho>0.5}
Suppose $\rho<\tfrac{2}{3}$. If $\rho<\tfrac{3}{5}$, then for any $m, n\in I_\rho$ such that $m\neq n$, we have 

$$x_{m, n}^\rho+y_{m, n}^\rho<2d_\rho$$

Otherwise, if $\rho\in\left(\tfrac{3}{5}, \tfrac{2}{3}\right)$, the large inequality holds true, and equality case occurs precisely for indices $(m, n)=\left(N_{\rho, k}, N_{\rho, l}\right)$ with distinct $k, l\in\left\{0, 1, \hdots, \left\lfloor\tfrac{2\rho-1}{2-3\rho}\right\rfloor\right\}$.
\end{proposition}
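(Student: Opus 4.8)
The plan is to mirror the structure of the proof of Proposition \ref{prop:x+y<2d, >2/3}, splitting into the cases $k=l$ and $k\neq l$ via Corollary \ref{coro:maxima x_mn+y_mn, >0.5}, but now keeping careful track of when the inequalities in Lemma \ref{lem:bounds on maxima, >0.5} are strict. Since $\rho\in(\tfrac12,\tfrac23)$ forces $N_{\rho,0}=1$ (as $\lfloor\tfrac{1}{1-\rho}\rfloor=2$), we have $2d_\rho=\tfrac{2N_{\rho,0}}{2N_{\rho,0}+1}=\tfrac23$, and the two reductions from the earlier proof become: in the case $k\neq l$, the desired strict inequality $x_{m,n}^\rho+y_{m,n}^\rho<2d_\rho$ is equivalent to $N_{\rho,k}+N_{\rho,l}<3(k+l)+2$, and the non-strict inequality to $N_{\rho,k}+N_{\rho,l}\leqslant 3(k+l)+2$; in the case $k=l$, strictness is equivalent to $2N_{\rho,k}<3(2k+1)$ and the non-strict version to $2N_{\rho,k}\leqslant 3(2k+1)$. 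The sequence $u_{\rho,k}=3k+1-N_{\rho,k}$ of Lemma \ref{lem:equality case in bounds on maxima lemma, >0.5} measures exactly the slack: $N_{\rho,k}=3k+1-u_{\rho,k}$ with $u_{\rho,k}\geqslant 0$ nondecreasing.

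First I would handle the sub-case $\rho<\tfrac35$. By the remark after Lemma \ref{lem:equality case in bounds on maxima lemma, >0.5}, $\lfloor\tfrac{2\rho-1}{2-3\rho}\rfloor=0$, so $u_{\rho,k}\geqslant 1$ for every $k\geqslant 1$ (it is nondecreasing and only $u_{\rho,0}=0$). For $k\neq l$ at least one of $k,l$ is $\geqslant 1$, hence $u_{\rho,k}+u_{\rho,l}\geqslant 1$ and $N_{\rho,k}+N_{\rho,l}=3(k+l)+2-(u_{\rho,k}+u_{\rho,l})\leqslant 3(k+l)+1<3(k+l)+2$, giving the strict inequality. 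For $k=l$ (necessarily $k\geqslant 1$ since the maximum in the $k=l$ case is over $m\neq n$, which requires $\operatorname{Card} I_{\rho,k}\geqslant 2$, i.e. $N_{\rho,k}\geqslant N_{\rho,k-1}+2$, but more simply one just checks $k=0$ directly: $I_{\rho,0}$ may be a singleton), we get $2N_{\rho,k}=2(3k+1-u_{\rho,k})\leqslant 6k<6k+3$. I would also separately dispatch any degenerate $k=0$ or $l=0$ situation where $I_{\rho,k}$ is too small for the relevant maximum to be defined — there the claim is vacuous or reduces to a single explicit point comparison.

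Next, the sub-case $\rho\in(\tfrac35,\tfrac23)$. Here $u_{\rho,k}=0$ for $k\in\{0,1,\dots,\lfloor\tfrac{2\rho-1}{2-3\rho}\rfloor\}$ and $u_{\rho,k}\geqslant 1$ beyond that. So $N_{\rho,k}+N_{\rho,l}\leqslant 3(k+l)+2$ always (the non-strict inequality of Lemma \ref{lem:bounds on maxima, >0.5}), establishing the large inequality $x_{m,n}^\rho+y_{m,n}^\rho\leqslant 2d_\rho$ in all cases — in the $k=l$ case one checks $2N_{\rho,k}\leqslant 6k+2<6k+3$ strictly, so equality there is impossible, which is why the equality set consists only of off-diagonal-block pairs. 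Equality in the $k\neq l$ case holds iff $u_{\rho,k}=u_{\rho,l}=0$, i.e. iff both $k,l\in\{0,\dots,\lfloor\tfrac{2\rho-1}{2-3\rho}\rfloor\}$, and by the "more precisely" part of Corollary \ref{coro:maxima x_mn+y_mn, >0.5} the maximizing indices are $(m,n)=(N_{\rho,k},N_{\rho,l})$; combined with the strictness ruling out $k=l$, this yields exactly the stated equality characterization: $(m,n)=(N_{\rho,k},N_{\rho,l})$ for distinct $k,l\in\{0,1,\dots,\lfloor\tfrac{2\rho-1}{2-3\rho}\rfloor\}$.

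The main obstacle I anticipate is the bookkeeping at the boundary cases $k=0$ (and $l=0$): one must verify that $N_{\rho,0}=1$ in this range and decide whether $I_{\rho,0}=\llbracket 1,1\rrbracket$ or larger, since the $k=l$ branch of Corollary \ref{coro:maxima x_mn+y_mn, >0.5} quietly presumes a pair of distinct indices in $I_{\rho,k}$ exists; getting the equality set exactly right (not over- or under-counting the pair $(N_{\rho,0},N_{\rho,1})$ etc.) requires care that $u_{\rho,k}=0$ genuinely characterizes equality and that no diagonal block contributes. Everything else is a direct substitution of $N_{\rho,k}=3k+1-u_{\rho,k}$ into the two equivalent inequalities already derived in the proof of Proposition \ref{prop:x+y<2d, >2/3}.
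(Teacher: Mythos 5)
Your proposal is correct and follows essentially the same route as the paper: reduce to the tiles $I_{\rho,k}\times I_{\rho,l}$ via Corollary \ref{coro:maxima x_mn+y_mn, >0.5}, note $N_{\rho,0}=1$ so that $2d_\rho=\tfrac23$, and use the sequence $u_{\rho,k}=3k+1-N_{\rho,k}$ together with Lemma \ref{lem:equality case in bounds on maxima lemma, >0.5} to characterize exactly when the non-strict bound becomes an equality. Your explicit handling of the degenerate singleton tile $I_{\rho,0}=\{1\}$ and of why no diagonal block can contribute to the equality set is slightly more careful than the paper's "works exactly the same way" shortcut, but the argument is the same.
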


\begin{proof}
Let $k, l\in\mathbb{N}$. Once again, it is enough to show 

$$\underset{\substack{m\in I_{\rho, k} \\ n\in I_{\rho, l} \\ m\neq n}}\max \left(x_{m, n}^\rho+y_{m, n}^\rho\right)<2d_\rho = \tfrac{2N_{\rho, 0}}{2N_{\rho, 0}+1}$$

The case when $k=l$ works exactly the same way as in the proof of Proposition~\ref{prop:x+y<2d, >2/3}. If $k\neq l$, then we had the condition

$$\begin{aligned}
&\underset{\substack{m\in I_{\rho, k} \\ n\in I_{\rho, l} \\ m\neq n}}\max \left(x_{m, n}^\rho+y_{m, n}^\rho\right)<2d_\rho \\
\iff& N_{\rho, k}+N_{\rho, l}<(k+l)\left(2N_{\rho, 0}+1\right)+2N_{\rho, 0} \\
\iff& N_{\rho, k}+N_{\rho, l}<3(k+l)+2 \text{ since }N_{\rho, 0}=1 \\
\iff& N_{\rho, k}<3k+1 \text{ or }N_{\rho, l}<3l+1 \text{ according to Lemma \ref{lem:bounds on maxima, >0.5}} \\
\iff& u_{\rho, k}>0 \text{ or }u_{\rho, l}>0 
\end{aligned}$$

Therefore according to Lemma \ref{lem:equality case in bounds on maxima lemma, >0.5}

\begin{itemize}
\item if $\rho<\tfrac{3}{5}$, this condition is fulfilled for any distinct $k, l\in\mathbb{N}$, since $\left\lfloor\tfrac{2\rho-1}{2-3\rho}\right\rfloor=0$.
\item if $\rho\in\left(\tfrac{3}{5}, \tfrac{2}{3}\right)$, then this condition is not fulfilled (meaning that we have an equality instead of strict inequality in the initial statement) precisely for distinct \\ $k, l\in\left\{0, 1, \hdots, \left\lfloor\tfrac{2\rho-1}{2-3\rho}\right\rfloor\right\}$ since $\left\lfloor\tfrac{2\rho-1}{2-3\rho}\right\rfloor\geqslant1$.
\end{itemize}
\end{proof} 

Note that if $\rho\in\left(\tfrac{1}{2}, \tfrac{2}{3}\right)$, we have 

$$\tfrac{1}{1-\rho}\in(2, 3) \text{ and therefore }N_{\rho, 0}=1$$

We will denote $k_\rho = \left\lfloor\tfrac{2\rho-1}{2-3\rho}\right\rfloor$ in the following theorem.

\begin{theorem}[When the best diagonal point is an extreme point]\label{thm:is the best diagonal point an extreme point ?, >0.5}
\item If $\rho\in\left(\tfrac{3}{5}, \tfrac{2}{3}\right)$, then $D_\rho$ is not an extreme point of $\Lambda_\rho$; instead it is the middle point of points $A^\rho_{N_{\rho, 0}, N_{\rho, k_\rho}}$ and $A^\rho_{N_{\rho, k_\rho}, N_{\rho, 0}}$, which belong to line $x+y=2d_\rho$ and are extreme points of $\Lambda_\rho$. If $\rho\in\left(\tfrac{1}{2}, \tfrac{3}{5}\right)\cup\left(\tfrac{2}{3}, 1\right)$, then $D_\rho$ is an extreme point of $\Lambda_\rho$.
\end{theorem}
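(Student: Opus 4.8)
The plan is to combine the diagonal analysis of Section~\ref{sec:2} with the comparison inequalities for $x^\rho_{m,n}+y^\rho_{m,n}$ established in Propositions~\ref{prop:x+y<2d, >2/3} and \ref{prop:x+y<2d, 2/3>rho>0.5}, organized according to the three ranges of $\rho$. Throughout, recall that the extreme points of $\Lambda_\rho$ lie in $L_\rho$, that $D_\rho=(d_\rho,d_\rho)$ with $d_\rho=x^\rho_{N_{\rho,0},N_{\rho,0}}=\tfrac{N_{\rho,0}}{2N_{\rho,0}+1}$, and that $D_\rho$ lies on the line $x+y=2d_\rho$. First I would note the elementary geometric principle I will use repeatedly: if every point of $L_\rho$ satisfies $x+y\leqslant 2d_\rho$, with equality at $D_\rho$, then $D_\rho$ maximizes the linear functional $(x,y)\mapsto x+y$ on $\Lambda_\rho$; if moreover $D_\rho$ is the \emph{unique} maximizer, it is an extreme point, whereas if the maximizer set is a nondegenerate segment containing $D_\rho$ in its relative interior, then $D_\rho$ is not extreme but is a convex combination of the two endpoints of that segment, which are themselves extreme points.

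For the case $\rho\in\left(\tfrac{2}{3},1\right)$, the argument is immediate from Proposition~\ref{prop:x+y<2d, >2/3}: every non-diagonal point of $L_\rho$ (i.e.\ every $A^\rho_{m,n}$ with $m,n\in I_\rho$, $m\neq n$) satisfies $x^\rho_{m,n}+y^\rho_{m,n}<2d_\rho$, while among diagonal points $A^\rho_{m,m}$ the strict maximum of $x+y$ is attained only at $D_\rho$ by Theorem~\ref{thm:best diagonal point, >0.5} and Corollary~\ref{coro:x_nn increasing on partition, >0.5}; one must also check the three auxiliary vertices $(0,0),(0,\rho),(\rho,0)$, for which $x+y\in\{0,\rho\}$ and $\rho<1\leqslant 2d_\rho$ fails... actually $2d_\rho=\tfrac{2N_{\rho,0}}{2N_{\rho,0}+1}<1$, so here one uses instead the simpler bound $d_\rho>\tfrac12\rho$ from Proposition~\ref{prop:best diagonal point>rho/2, >0.5}, giving $2d_\rho>\rho$, hence these three points also lie strictly below the line. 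Thus $D_\rho$ is the unique maximizer of $x+y$ on $\Lambda_\rho$ and is therefore an extreme point. The case $\rho\in\left(\tfrac12,\tfrac35\right)$ is handled identically, using the first alternative of Proposition~\ref{prop:x+y<2d, 2/3>rho>0.5} in place of Proposition~\ref{prop:x+y<2d, >2/3}, together with $d_\rho>\tfrac12\rho$ for the three auxiliary vertices.

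For the remaining case $\rho\in\left(\tfrac35,\tfrac23\right)$, I would invoke the second alternative of Proposition~\ref{prop:x+y<2d, 2/3>rho>0.5}: every point of $L_\rho$ still satisfies $x+y\leqslant 2d_\rho$ (the auxiliary vertices again by $2d_\rho>\rho$, and the diagonal points by Theorem~\ref{thm:best diagonal point, >0.5}), but now equality on the line $x+y=2d_\rho$ is attained by the non-diagonal points $A^\rho_{N_{\rho,k},N_{\rho,l}}$ for distinct $k,l\in\{0,1,\dots,k_\rho\}$. Since $\tfrac35<\rho<\tfrac23$ forces $N_{\rho,0}=1$ and $k_\rho=\big\lfloor\tfrac{2\rho-1}{2-3\rho}\big\rfloor\geqslant 1$, the pair $(k,l)=(0,k_\rho)$ is available and gives the two distinct points $A^\rho_{N_{\rho,0},N_{\rho,k_\rho}}$ and (by the diagonal symmetry $A^\rho_{m,n}\mapsto A^\rho_{n,m}$) $A^\rho_{N_{\rho,k_\rho},N_{\rho,0}}$; both lie on the line $x+y=2d_\rho$. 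Their midpoint has equal coordinates and lies on that line, hence equals $D_\rho$, so $D_\rho$ is a proper convex combination of two other points of $\Lambda_\rho$ and is not extreme. To see that these two points \emph{are} extreme points of $\Lambda_\rho$, I would argue that the face of $\Lambda_\rho$ cut out by the functional $x+y$ at its maximum value $2d_\rho$ is the segment $S$ whose vertices are the extreme-coordinate members of the finite set $\{A^\rho_{N_{\rho,k},N_{\rho,l}} : k\neq l,\ k,l\leqslant k_\rho\}$; by Corollary~\ref{coro:maxima x_mn+y_mn, >0.5} (the second case) these points are $A^\rho_{N_{\rho,k},N_{\rho,l}}$ with $x$-coordinate $\tfrac{N_{\rho,k}-k}{N_{\rho,k}+N_{\rho,l}+1}$, and since on the line $x+y=2d_\rho$ all these lie on a common segment, the extreme ones are exactly those with largest and smallest first coordinate, namely $(k,l)=(0,k_\rho)$ and $(k,l)=(k_\rho,0)$, which are the two claimed points; being vertices of a face of $\Lambda_\rho$, they are extreme points of $\Lambda_\rho$.

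I expect the main obstacle to be the bookkeeping in the $\rho\in\left(\tfrac35,\tfrac23\right)$ case: one must verify that among all the points $A^\rho_{N_{\rho,k},N_{\rho,l}}$ with $k\neq l$ and $k,l\leqslant k_\rho$ that meet the line $x+y=2d_\rho$, the \emph{outermost} pair is precisely $(0,k_\rho)$ and $(k_\rho,0)$ — equivalently, that the first coordinate $\tfrac{N_{\rho,k}-k}{N_{\rho,k}+N_{\rho,l}+1}$ is monotone in the right way as $(k,l)$ ranges over this index set subject to the equality constraint. This reduces, via Lemma~\ref{lem:equality case in bounds on maxima lemma, >0.5} (which on that index range gives $N_{\rho,k}=3k+1$), to a transparent computation showing the first coordinate equals $\tfrac{2k+1}{3k+3l+2}$ and is strictly decreasing in $k$ for fixed $k+l$... so the extremes over the triangle $\{k\neq l,\ 0\leqslant k,l\leqslant k_\rho\}$ are at $(0,k_\rho)$ and $(k_\rho,0)$. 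The rest is routine.
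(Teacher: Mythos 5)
Your proposal follows essentially the same route as the paper: reduce everything to the linear functional $x+y$, use Propositions~\ref{prop:x+y<2d, >2/3} and \ref{prop:x+y<2d, 2/3>rho>0.5} to show $D_\rho$ is the unique maximizer when $\rho\in\left(\tfrac12,\tfrac35\right)\cup\left(\tfrac23,1\right)$, and in the remaining range identify the face $\{x+y=2d_\rho\}$ as a segment whose endpoints $A^\rho_{N_{\rho,0},N_{\rho,k_\rho}}$ and $A^\rho_{N_{\rho,k_\rho},N_{\rho,0}}$ are extreme while their midpoint is $D_\rho$; your explicit check of the auxiliary vertices via $2d_\rho>\rho$ is a small extra care the paper omits. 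One slip to fix in the last step: with $u_{\rho,k}=u_{\rho,l}=0$ one gets $N_{\rho,k}=3k+1$, hence the first coordinate is $\tfrac{2k+1}{3(k+l+1)}$ (not $\tfrac{2k+1}{3k+3l+2}$), and this quantity is strictly \emph{increasing} in $k$ and strictly decreasing in $l$ separately — which is what you actually need, since the index set $\{(k,l):k\neq l,\ k,l\leqslant k_\rho\}$ contains pairs with different values of $k+l$, so monotonicity along lines of constant $k+l$ alone would not locate the outermost points. With that corrected, the argument is the paper's.
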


\begin{proof}
\begin{itemize}
\item Suppose $\rho\in\left(\tfrac{1}{2}, \tfrac{3}{5}\right)\cup\left(\tfrac{2}{3}, 1\right)$. Then according to Propositions \ref{prop:x+y<2d, >2/3} and \ref{prop:x+y<2d, 2/3>rho>0.5}, for all distinct $m, n\in I_\rho$, we have

$$x_{m, n}^\rho+y_{m, n}^\rho<2d_\rho$$

and hence $D_\rho$ is an extreme point of $\Lambda_\rho$.

\item Suppose $\rho\in\left(\tfrac{3}{5}, \tfrac{2}{3}\right)$.

\begin{figure}[!h]
\begin{center}
\includegraphics[width=11cm]{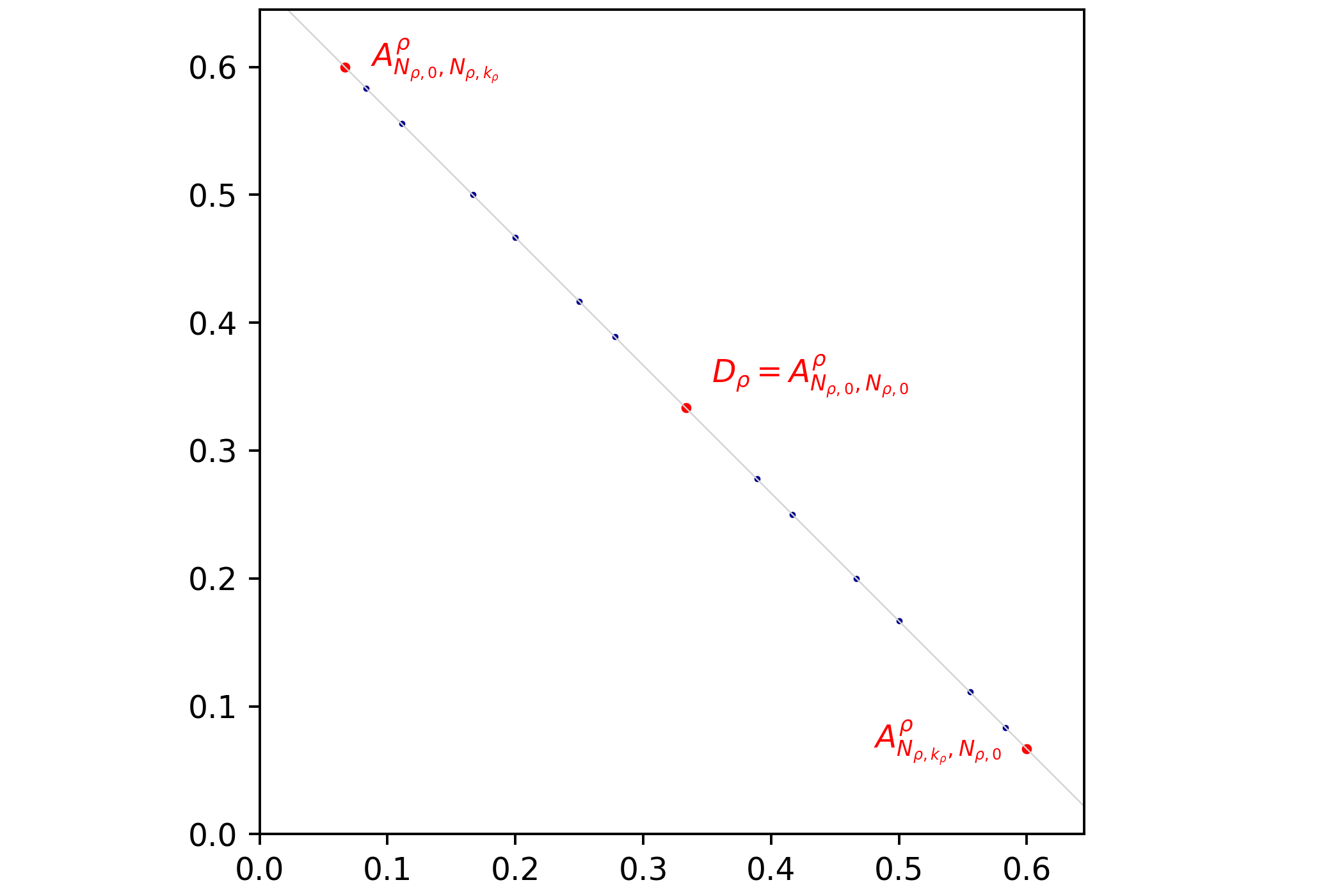}
\end{center}
    \caption{Points $A_{N_{\rho, k}, N_{\rho, l}}^\rho$ for distinct $k, l\in\left\{0, 1, \hdots, k_\rho\right\}$ for $\rho=0.645+\pi.10^{-5}$. Straight line $y=-x+2d_\rho$ is in gray.}
\end{figure}

According to the proof of Proposition \ref{prop:x+y<2d, 2/3>rho>0.5}, we have for any distinct \\ $k, l\in\left\{0, 1, \hdots, k_\rho\right\}$

$$x_{N_{\rho, k}, N_{\rho, l}}^\rho+y_{N_{\rho, 0}, N_{\rho, 1}}^\rho = 2d_\rho$$

which means that such points $A_{N_{\rho, k}, N_{\rho, l}}^\rho$ (and $D_\rho$) lie on line $x+y=2d_\rho$ (and those are the only points of $L_\rho$ lying on this line). Additionally, for any such indices $k, l$, we have 

$$\begin{aligned}
x_{N_{\rho, k}, N_{\rho, l}}^\rho &= \tfrac{N_{\rho, k}-k}{N_{\rho, k}+N_{\rho, l}+1} \\
&= \tfrac{2k+1}{3(k+l+1)}
\end{aligned}$$

because $u_{\rho, k}=u_{\rho, l}=0$ (see Lemma \ref{lem:equality case in bounds on maxima lemma, >0.5}) and $N_{\rho, 0}=1$. This sequence is strictly decreasing with respect to $l$ and strictly increasing with respect to $k$. Therefore $A_{N_{\rho, 0}, N_{\rho, k_\rho}}^\rho$ and $A_{N_{\rho, k_\rho}, N_{\rho, 0}}^\rho$ have respectively minimal and maximal $x$ coordinate among points of $L_\rho$ lying on line $y=-x+2d_\rho$. As a consequence,  $A_{N_{\rho, 0}, N_{\rho, k_\rho}}^\rho$ and $A_{N_{\rho, k_\rho}, N_{\rho, 0}}^\rho$ are extreme points of $\Lambda_\rho$. \\

Finally, by symmetry, the middle point of points $A_{N_{\rho, 0}, N_{\rho, k_\rho}}^\rho, A_{N_{\rho, k_\rho}, N_{\rho, 0}}^\rho$ is on the diagonal and therefore it is precisely $D_\rho$ (which shows that $D_\rho$ is not an extreme point of $\Lambda_\rho$).
\end{itemize}
\end{proof}

\subsection{Case $\rho<\frac{1}{2}$}

In this subsection, we assume that $\rho<\tfrac{1}{2}$. Let's remind the reader about the following formulas, coming from Proposition \ref{prop:partition, <0.5}. One has 

$$I_\rho = \left\{N_{\rho, k}'\,|\,k\in\mathbb{N}\setminus\left\{0\right\}\right\}$$ 

and for all $i, j\in\mathbb{N}\setminus\left\{0\right\}$ and $k\in Y_{\rho, i}, l\in Y_{\rho, j}$ we have 

$$\begin{aligned}
x_{N_{\rho, k}', N_{\rho, l}'}^\rho &= \tfrac{k}{(k+l)\left\lfloor\tfrac{1}{\rho}\right\rfloor+i+j-1} \\
y_{N_{\rho, k}', N_{\rho, l}'}^\rho &= \tfrac{l}{(k+l)\left\lfloor\tfrac{1}{\rho}\right\rfloor+i+j-1} \\
\text{hence }x_{N_{\rho, k}', N_{\rho, l}'}^\rho+y_{N_{\rho, k}', N_{\rho, l}'}^\rho &= \tfrac{k+l}{(k+l)\left\lfloor\tfrac{1}{\rho}\right\rfloor+i+j-1}
\end{aligned}$$

Therefore we have the following  

\begin{lemma}\label{lem:x_mn+ymn increasing, <0.5}
For any $i, j\in\mathbb{N}\setminus\left\{0\right\}$, sequence $\left(x_{N_{\rho, k}', N_{\rho, l}'}^\rho+y_{N_{\rho, k}', N_{\rho, l}'}^\rho\right)_{(k, l)\in \mathbb{N}\setminus\left\{0\right\}\times \mathbb{N}\setminus\left\{0\right\}}$ is strictly increasing with respect to its first (respectively second) index on the tile $Y_{\rho, i}\times Y_{\rho, j}$. 
\end{lemma}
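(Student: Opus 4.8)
The statement to prove is Lemma \ref{lem:x_mn+ymn increasing, <0.5}, which asserts that for fixed $i,j$, the quantity
$$x_{N_{\rho,k}',N_{\rho,l}'}^\rho+y_{N_{\rho,k}',N_{\rho,l}'}^\rho = \frac{k+l}{(k+l)\lfloor 1/\rho\rfloor + i + j - 1}$$
is strictly increasing in $k$ (for $k\in Y_{\rho,i}$) and, by symmetry, in $l$ (for $l\in Y_{\rho,j}$), where $i,j$ are held fixed.

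The plan is to observe that on the tile $Y_{\rho,i}\times Y_{\rho,j}$, the displayed formula from Proposition \ref{prop:partition, <0.5} expresses the sum $x+y$ purely as a function of the single integer variable $s=k+l$, namely $f(s)=\frac{s}{s\lfloor 1/\rho\rfloor + (i+j-1)}$, with $i,j$ constant. First I would note that $\lfloor 1/\rho\rfloor \geq 2$ since $\rho<\tfrac12$, and that $i+j-1\geq 1$, so the denominator is a positive linear function of $s$ with positive slope and positive intercept. Then I would show $f$ is strictly increasing in $s$: writing $f(s)=\frac{1}{\lfloor 1/\rho\rfloor + (i+j-1)/s}$, the term $(i+j-1)/s$ is strictly decreasing in $s$ because $i+j-1>0$, so the denominator decreases and $f$ increases. (Equivalently, compute $f(s+1)-f(s)=\frac{i+j-1}{(\text{den}(s))(\text{den}(s+1))}>0$.)

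Finally I would translate this back: incrementing $k$ by one while keeping $l$ (and $i,j$) fixed increments $s=k+l$ by one, hence strictly increases $x+y$; the statement for the second index follows identically by the symmetry $x_{N_{\rho,k}',N_{\rho,l}'}^\rho+y_{N_{\rho,k}',N_{\rho,l}'}^\rho = x_{N_{\rho,l}',N_{\rho,k}'}^\rho+y_{N_{\rho,l}',N_{\rho,k}'}^\rho$ together with the fact that the roles of $i$ and $j$ are symmetric in the formula. I do not anticipate any real obstacle here: this is a one-line monotonicity check on an explicit Möbius-type function of $s=k+l$, entirely parallel to Corollary \ref{coro:x_nn increasing on new partition, <0.5} and Lemma \ref{lem:x_mn+y_mn increasing, >0.5}; the only point worth stating carefully is that $i$ and $j$ must be frozen (so that the formula genuinely depends on $k,l$ only through $k+l$), which is exactly why the claim is phrased tile-by-tile.
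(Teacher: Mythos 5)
Your proposal is correct and matches the paper's approach: the paper derives the same formula $x+y=\tfrac{k+l}{(k+l)\lfloor 1/\rho\rfloor+i+j-1}$ from Proposition \ref{prop:partition, <0.5} and states the lemma as an immediate consequence, exactly the one-line monotonicity-in-$s=k+l$ check you carry out (with the same key observation that $i+j-1>0$ and that $i,j$ are frozen on the tile).
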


\begin{corollary}\label{coro:maxima x_mn+y_mn, <0.5}
Let $i, j\in\mathbb{N}\setminus\left\{0\right\}$. Then 

$$\underset{\substack{k\in Y_{\rho, i} \\ l\in Y_{\rho, j} \\ k\neq l}}\max \left(x_{N_{\rho, k}', N_{\rho, l}'}^\rho+y_{N_{\rho, k}', N_{\rho, l}'}^\rho\right) = \left\{
\begin{aligned}
\tfrac{2M_{\rho, i}-1}{\left(2M_{\rho, i}-1\right)\left\lfloor\tfrac{1}{\rho}\right\rfloor+2i-1} &\text{ if }i=j \text{ and Card}(Y_{\rho, i})\geqslant2 \\
\tfrac{M_{\rho, i}+M_{\rho, j}}{\left(M_{\rho, i}+M_{\rho, j}\right)\left\lfloor\tfrac{1}{\rho}\right\rfloor+i+j-1} &\text{ if }i\neq j
\end{aligned}\right.$$

\noindent More precisely, the maximum is attained respectively only for couples of indices 
\newline $\left(M_{\rho, i}, M_{\rho, i}-1\right), \left(M_{\rho, i}-1, M_{\rho, i}\right)$ in the first case and $\left(M_{\rho, i}, M_{\rho, j}\right)$ in the second case.
\end{corollary}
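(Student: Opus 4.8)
The plan is to mirror the structure of the $\rho>\tfrac12$ case. By Lemma~\ref{lem:x_mn+ymn increasing, <0.5}, on the tile $Y_{\rho, i}\times Y_{\rho, j}$ the quantity $x_{N_{\rho, k}', N_{\rho, l}'}^\rho+y_{N_{\rho, k}', N_{\rho, l}'}^\rho$ is strictly increasing in each of $k$ and $l$ separately, so subject to the constraint $k\neq n$ its maximum over the tile is attained at the corner $(\max Y_{\rho,i},\max Y_{\rho,j})$ whenever $i\neq j$ (the constraint $k\neq l$ is vacuous there, since $\max Y_{\rho,i}$ and $\max Y_{\rho,j}$ lie in disjoint intervals), and at $(M_{\rho,i},M_{\rho,i}-1)$ or its transpose when $i=j$ (provided $\mathrm{Card}(Y_{\rho,i})\geqslant 2$, so that $M_{\rho,i}-1$ is still in $Y_{\rho,i}$; otherwise the constrained set on the diagonal tile is empty and there is nothing to maximize). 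Recall from Proposition~\ref{prop:partition, <0.5} and Lemma~\ref{lem:explicit expression new partition, <0.5} that $\max Y_{\rho,j}=M_{\rho,j}$.

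First I would substitute $k=M_{\rho,i}$, $l=M_{\rho,j}$ into the closed form $x+y=\tfrac{k+l}{(k+l)\lfloor 1/\rho\rfloor+i+j-1}$ from the displayed formulas just above, which immediately gives the second case $\tfrac{M_{\rho,i}+M_{\rho,j}}{(M_{\rho,i}+M_{\rho,j})\lfloor 1/\rho\rfloor+i+j-1}$. For the diagonal tile case $i=j$, I would plug $k=M_{\rho,i}$, $l=M_{\rho,i}-1$, so $k+l=2M_{\rho,i}-1$ and the index offset is $i+j-1=2i-1$, giving $\tfrac{2M_{\rho,i}-1}{(2M_{\rho,i}-1)\lfloor 1/\rho\rfloor+2i-1}$; one should check this indeed exceeds the value at $(M_{\rho,i}-1,M_{\rho,i}-2)$ etc., but that is exactly the monotonicity of Lemma~\ref{lem:x_mn+ymn increasing, <0.5} applied within the tile, so no separate computation is needed.

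The only genuinely careful point is the claim about \emph{where} the maximum is attained (the ``more precisely'' sentence). For $i\neq j$ this follows because strict monotonicity in both indices forces the unique maximizer to be the unique corner of the rectangle $Y_{\rho,i}\times Y_{\rho,j}$ with both coordinates maximal, namely $(M_{\rho,i},M_{\rho,j})$, and the excluded diagonal $k=l$ does not meet this corner since $Y_{\rho,i}\cap Y_{\rho,j}=\varnothing$. For $i=j$ the set $\{(k,l)\in Y_{\rho,i}^2: k\neq l\}$ is the square minus its diagonal; strict monotonicity in each coordinate shows that among such pairs the sum is maximized when $\{k,l\}=\{M_{\rho,i},M_{\rho,i}-1\}$, i.e. at the two transposed pairs $(M_{\rho,i},M_{\rho,i}-1)$ and $(M_{\rho,i}-1,M_{\rho,i})$, and uniqueness of the maximizer in each variable rules out any other pair. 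The main obstacle, such as it is, is bookkeeping: keeping straight that $k$ ranges in $Y_{\rho,i}$ (not $I_\rho$, and not $J'_{\rho,k}$), that the relevant endpoint is $M_{\rho,i}=\max Y_{\rho,i}$, and that the ``$i=j$'' branch is only meaningful when $\mathrm{Card}(Y_{\rho,i})\geqslant 2$ so that a valid pair with $k\neq l$ exists — everything else is a direct substitution into the formula already derived in Proposition~\ref{prop:partition, <0.5}.
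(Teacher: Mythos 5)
Your proposal is correct and matches the paper's intended argument: the corollary is stated as an immediate consequence of Lemma~\ref{lem:x_mn+ymn increasing, <0.5} (strict monotonicity of the sum in each index on the tile) together with the explicit formula from Proposition~\ref{prop:partition, <0.5}, and your substitutions at the corner $(M_{\rho,i},M_{\rho,j})$ and at $\{M_{\rho,i},M_{\rho,i}-1\}$ are exactly what is needed. (Minor typo only: ``$k\neq n$'' in your first paragraph should read ``$k\neq l$''.)
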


Note that according to Lemma \ref{lem:explicit expression new partition, <0.5}, for $i\geqslant2$ the condition $\text{Card}(Y_{\rho, i})\geqslant2$ is equivalent to $M_{\rho, i}-M_{\rho, i-1}\geqslant2$ (and $M_{\rho, 1}-1\geqslant2$ when $i=1$), and it is not always satisfied, depending on $\rho$.

\begin{lemma}\label{lem:M_rho1=1, <0.5}
One has 

$$M_{\rho, 1}=1\iff\rho\in\bigcup_{t=2}^\infty\left(\tfrac{1}{t+1}, \tfrac{2}{2t+1}\right)$$
\end{lemma}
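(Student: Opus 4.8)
The plan is to unwind the two floor functions defining $M_{\rho,1}=\left\lfloor\tfrac{\rho}{1-\left\lfloor 1/\rho\right\rfloor\rho}\right\rfloor$, exactly as was done in the proof of Proposition \ref{prop:best diagonal point>rho/2, <0.5}. First I would write $t:=\left\lfloor\tfrac{1}{\rho}\right\rfloor$; since $\rho<\tfrac12$ we have $t\geqslant 2$, and $\left\lfloor\tfrac{1}{\rho}\right\rfloor=t$ is equivalent to $\rho\in\left(\tfrac{1}{t+1},\tfrac{1}{t}\right)$. On this interval $1-t\rho\in(0,1)$ is positive, so the inner expression $\tfrac{\rho}{1-t\rho}$ is well-defined and positive.

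Next, for a fixed such $t$, I would compute when $\left\lfloor\tfrac{\rho}{1-t\rho}\right\rfloor=1$, i.e. when $1\leqslant\tfrac{\rho}{1-t\rho}<2$. The left inequality $\rho\geqslant 1-t\rho$ rearranges to $\rho\geqslant\tfrac{1}{t+1}$, which holds automatically (with equality excluded since $\rho$ is irrational) on the interval under consideration; the right inequality $\rho<2(1-t\rho)$ rearranges to $\rho<\tfrac{2}{2t+1}$. Hence, intersecting with $\rho\in\left(\tfrac{1}{t+1},\tfrac{1}{t}\right)$ and noting that $\tfrac{1}{t+1}<\tfrac{2}{2t+1}<\tfrac{1}{t}$ for every $t\geqslant 2$, we get that $M_{\rho,1}=1$ together with $\left\lfloor 1/\rho\right\rfloor=t$ is equivalent to $\rho\in\left(\tfrac{1}{t+1},\tfrac{2}{2t+1}\right)$. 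Taking the union over $t\geqslant 2$ and using that these intervals are pairwise disjoint (they sit inside the disjoint intervals $\left(\tfrac{1}{t+1},\tfrac{1}{t}\right)$) yields the claimed description.

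There is essentially no serious obstacle here: the only point requiring a line of care is the verification of the chain of inequalities $\tfrac{1}{t+1}<\tfrac{2}{2t+1}<\tfrac{1}{t}$, which guarantees that the constraint $\rho<\tfrac{2}{2t+1}$ is the binding one (the other endpoint $\tfrac{1}{t}$ never intervenes) and that each interval $\left(\tfrac{1}{t+1},\tfrac{2}{2t+1}\right)$ is nonempty. One should also remark that $\rho$ being irrational means all the endpoint cases are vacuous, so there is no ambiguity between strict and non-strict inequalities in the floor computations. The rest is the routine rearrangement of linear inequalities already illustrated in Proposition \ref{prop:best diagonal point>rho/2, <0.5}.
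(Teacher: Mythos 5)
Your proof is correct, and it is exactly the computation the paper leaves implicit: the lemma is stated without proof, and your unwinding of the two floors reproduces the equivalence $\left\lfloor\tfrac{\rho}{1-k\rho}\right\rfloor=l\iff\rho\in\left(\tfrac{l}{lk+1},\tfrac{l+1}{(l+1)k+1}\right)$ already used in Proposition \ref{prop:best diagonal point>rho/2, <0.5}, specialized to $l=1$, $k=t$. Nothing further is needed.
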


\begin{definition}\label{def:values of rho such that M_rho1=1, <0.5}
Let's denote 

$$E = \left(\mathbb{R}\setminus\mathbb{Q}\right)\cap\left(\bigcup_{t=2}^\infty\left(\tfrac{1}{t+1}, \tfrac{2}{2t+1}\right)\right)$$

and for any $t\in\mathbb{N}\setminus\left\{0, 1\right\}$,

$$E_{t}=\left(\mathbb{R}\setminus\mathbb{Q}\right)\cap\left(\tfrac{1}{t+1}, \tfrac{2}{2t+1}\right)$$
\end{definition}

\begin{proposition}\label{prop:x+y<2d, rho not in E}
Suppose $\rho\notin E$. For any $m, n\in I_\rho$ such that $m\neq n$, we have 

$$x_{m, n}^\rho+y_{m, n}^\rho<2d_\rho$$
\end{proposition}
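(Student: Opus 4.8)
The plan is to follow the same scheme as in the proofs of Propositions \ref{prop:x+y<2d, >2/3} and \ref{prop:x+y<2d, 2/3>rho>0.5}, with $M_{\rho,1}$ playing the role that $N_{\rho,0}$ played there and Lemma \ref{lem:bounds on maxima, <0.5} replacing Lemma \ref{lem:bounds on maxima, >0.5}. Write $q=\left\lfloor\tfrac1\rho\right\rfloor$ and $M=M_{\rho,1}$ for brevity, so that Theorem \ref{thm:best diagonal point, <0.5} gives $2d_\rho=\tfrac{2M}{2Mq+1}$. First I would record the single consequence of the hypothesis that will be used: by Lemma \ref{lem:M_rho1=1, <0.5}, $\rho\in E$ is exactly the case $M=1$, and since $M=\left\lfloor\rho/s_\rho\right\rfloor\geqslant1$ always (because $s_\rho=1-q\rho\in(0,\rho)$), the assumption $\rho\notin E$ is equivalent to $M\geqslant2$. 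That is the only place the hypothesis enters.

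Next, fix distinct $m,n\in I_\rho$. By Proposition \ref{prop:partition, <0.5} we may write $m=N'_{\rho,k}$ and $n=N'_{\rho,l}$ for unique $k,l\in\mathbb{N}\setminus\{0\}$, say $k\in Y_{\rho,i}$ and $l\in Y_{\rho,j}$; since $k\mapsto N'_{\rho,k}$ is injective, $m\neq n$ is the same as $k\neq l$. Hence, by Corollary \ref{coro:maxima x_mn+y_mn, <0.5}, it suffices to show that for every tile the maximum displayed there is strictly below $2d_\rho$, i.e. that
\[
M_{\rho,i}+M_{\rho,j}<2M(i+j-1)\qquad\text{when }i\neq j,
\]
and
\[
2M_{\rho,i}-1<2M(2i-1)\qquad\text{when }i=j\text{ and }\text{Card}(Y_{\rho,i})\geqslant2
\]
(when $i=j$ and $\text{Card}(Y_{\rho,i})=1$ there is no pair $k\neq l$ inside $Y_{\rho,i}$, so nothing is required). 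These two reductions come from clearing denominators in $\tfrac{S}{Sq+c}<\tfrac{2M}{2Mq+1}$, in which the $2MSq$-terms cancel, with $S=M_{\rho,i}+M_{\rho,j}$, $c=i+j-1$ in the first case and $S=2M_{\rho,i}-1$, $c=2i-1$ in the second.

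Both inequalities then follow from the upper bound $M_{\rho,j}\leqslant jM+j-1$ of Lemma \ref{lem:bounds on maxima, <0.5}, which also holds trivially (as an equality) for $j=1$, so it is available for every $j\geqslant1$. In the first case it yields $M_{\rho,i}+M_{\rho,j}\leqslant(i+j)(M+1)-2$, and the desired $(i+j)(M+1)-2<2M(i+j-1)$ rearranges to $(M-1)(i+j-2)>0$, which holds since $M\geqslant2$ and $i+j\geqslant3$ (because $i\neq j$). In the second case it yields $2M_{\rho,i}-1\leqslant 2iM+2i-3$, and the desired $2iM+2i-3<2M(2i-1)$ rearranges to $2M(i-1)>2i-3$; this is clear for $i=1$, and for $i\geqslant2$ it follows from $2M(i-1)\geqslant4(i-1)=4i-4>2i-3$. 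This establishes the proposition.

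I do not expect a genuine obstacle here: the computation is parallel to the case $\rho>\tfrac12$. The only points that need care are (i) converting the hypothesis $\rho\notin E$ into the usable inequality $M_{\rho,1}\geqslant2$ via Lemma \ref{lem:M_rho1=1, <0.5}, and (ii) making sure the bound $M_{\rho,j}\leqslant jM_{\rho,1}+j-1$ is legitimate at the boundary index $j=1$, where Lemma \ref{lem:bounds on maxima, <0.5} is not literally stated but holds as the trivial equality $M_{\rho,1}=M_{\rho,1}$.
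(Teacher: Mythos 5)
Your proof is correct and follows essentially the same route as the paper's: reduce to the tile maxima of Corollary \ref{coro:maxima x_mn+y_mn, <0.5}, clear denominators against $2d_\rho=\tfrac{2M_{\rho,1}}{2M_{\rho,1}\lfloor 1/\rho\rfloor+1}$, and finish with Lemma \ref{lem:bounds on maxima, <0.5} together with $M_{\rho,1}\geqslant 2$ from Lemma \ref{lem:M_rho1=1, <0.5}. Your two points of care (the singleton-tile case when $i=j$, and the validity of the bound $M_{\rho,j}\leqslant jM_{\rho,1}+j-1$ at $j=1$) are edge cases the paper passes over silently, and you handle them correctly.
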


\begin{proof}
Let $i, j\in\mathbb{N}\setminus\left\{0\right\}$. It is enough to show 

$$\underset{\substack{k\in Y_{\rho, i} \\ l\in Y_{\rho, j} \\ k\neq l}}\max \left(x_{N_{\rho, k}', N_{\rho, l}'}^\rho+y_{N_{\rho, k}', N_{\rho, l}'}^\rho\right)<2d_\rho = \tfrac{2M_{\rho, 1}}{2\left\lfloor\tfrac{1}{\rho}\right\rfloor M_{\rho, 1}+1}$$

\begin{itemize}

\item If $i\neq j$, according to Corollary \ref{coro:maxima x_mn+y_mn, <0.5} we have 

$$\begin{aligned}
&\underset{\substack{k\in Y_{\rho, i} \\ l\in Y_{\rho, j} \\ k\neq l}}\max \left(x_{N_{\rho, k}', N_{\rho, l}'}^\rho+y_{N_{\rho, k}', N_{\rho, l}'}^\rho\right)<2d_\rho \\
\iff& \left(2\left\lfloor\tfrac{1}{\rho}\right\rfloor M_{\rho, 1}+1\right)\left(M_{\rho, i}+M_{\rho, j}\right)<2M_{\rho, 1}\left(\left(M_{\rho, i}+M_{\rho, j}\right)\left\lfloor\tfrac{1}{\rho}\right\rfloor+i+j-1\right) \\
\iff& M_{\rho, i}+M_{\rho, j}<2M_{\rho, 1}(i+j-1)
\end{aligned}$$

But according to Lemmas \ref{lem:bounds on maxima, <0.5} and \ref{lem:M_rho1=1, <0.5}, we have 

$$\begin{aligned}
M_{\rho, i}+M_{\rho, j}&\leqslant (i+j)M_{\rho, 1}+i+j-2 \\
&<2M_{\rho, 1}(i+j-1) \text{ because }M_{\rho, 1}\geqslant 2 \text{ since }\rho\notin E \\
\end{aligned}$$

\item If $i=j$ and Card$(Y_{\rho, i})\geqslant2$, according to Corollary \ref{coro:maxima x_mn+y_mn, <0.5} we have

$$\begin{aligned}
&\underset{\substack{k\in Y_{\rho, i} \\ l\in Y_{\rho, j} \\ k\neq l}}\max \left(x_{N_{\rho, k}', N_{\rho, l}'}^\rho+y_{N_{\rho, k}', N_{\rho, l}'}^\rho\right)<2d_\rho \\
\iff& \left(2\left\lfloor\tfrac{1}{\rho}\right\rfloor M_{\rho, 1}+1\right)\left(2M_{\rho, i}-1\right)<2M_{\rho, 1}\left(\left(2M_{\rho, 1}-1\right)\left\lfloor\tfrac{1}{\rho}\right\rfloor+2i-1\right) \\
\iff& M_{\rho, i}<M_{\rho, 1}(2i-1)+\tfrac{1}{2}
\end{aligned}$$

But according to Lemma \ref{lem:bounds on maxima, <0.5}, we have 

$$\begin{aligned}
M_{\rho, i}&\leqslant iM_{\rho, 1}+i-1 \\
&<M_{\rho, 1}(2i-1)+\tfrac{1}{2} \\
\end{aligned}$$

\end{itemize}

Therefore, the initial claim is true.

\end{proof}

From the fact that for any $j\in\mathbb{N}\setminus\left\{0\right\}$, we have 

$$M_{\rho, 1}\leqslant M_{\rho, j+1}-M_{\rho, j}\leqslant M_{\rho, 1}+1$$

and we obtain the following 

\begin{lemma}\label{lem:equality case in bounds on maxima lemma, <0.5}
Suppose $\rho\in E$. The integer sequence 

$$\begin{aligned}
v_{\rho, j}&:=jM_{\rho, 1}+j-1-M_{\rho, j} \\
&\phantom{:}=2j-1-M_{\rho, j} \text{ since }M_{\rho, 1}=1
\end{aligned}$$ 

is increasing, with $v_{\rho, 1}=0$. Moreover, if $\rho\in E_t$ with $t\in\mathbb{N}\setminus\left\{0, 1\right\}$, then we have

$$\max\left\{j\in\mathbb{N}\setminus\left\{0\right\}:v_{\rho, j}=0\right\}=\left\lfloor\tfrac{1-t\rho}{2-(1+2t)\rho}\right\rfloor$$
\end{lemma}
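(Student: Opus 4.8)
The plan is to mirror the structure of Lemma~\ref{lem:equality case in bounds on maxima lemma, >0.5} exactly, transposing the argument from the partition $(I_{\rho,k})_k$ to the partition $(Y_{\rho,j})_j$ and from $N_{\rho,k}$ to $M_{\rho,j}$. First I would establish that $v_{\rho,j}$ is increasing: this is immediate from the left inequality in Lemma~\ref{lem:bounds on maxima, <0.5}, since $v_{\rho,j+1}-v_{\rho,j}=M_{\rho,1}+1-(M_{\rho,j+1}-M_{\rho,j})\geqslant 0$. That $v_{\rho,1}=0$ is trivial, as $M_{\rho,1}\cdot 1+1-1-M_{\rho,1}=0$. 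The substitution $M_{\rho,1}=1$ is valid here precisely because we have assumed $\rho\in E$, by Lemma~\ref{lem:M_rho1=1, <0.5}, so $v_{\rho,j}=2j-1-M_{\rho,j}$.

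For the main claim, I would compute $\max\{j\in\mathbb{N}\setminus\{0\}:v_{\rho,j}=0\}$ by the same chain of equivalences used for $u_{\rho,k}$. Fix $\rho\in E_t$, so that $\lfloor\tfrac1\rho\rfloor=t$ and $s_\rho=1-t\rho$. Then $v_{\rho,j}=0\iff M_{\rho,j}=2j-1\iff\left\lfloor\tfrac{j\rho}{1-t\rho}\right\rfloor=2j-1$, using the explicit form of $M_{\rho,j}$ from Definition~\ref{def:new partition, <0.5}. This in turn is equivalent to $2j-1\leqslant\tfrac{j\rho}{1-t\rho}<2j$ (the left inequality being non-strict since $\tfrac{j\rho}{1-t\rho}\notin\mathbb{Z}$, and actually both can be taken strict after noting irrationality), and solving the two inequalities for $j$ gives $j<\tfrac{1-t\rho}{2-(1+2t)\rho}$ from one side and a trivially satisfied bound from the other (the analogue of ``$-1<k$'' in the earlier lemma). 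Hence $v_{\rho,j}=0$ exactly for $j\in\{1,\dots,\left\lfloor\tfrac{1-t\rho}{2-(1+2t)\rho}\right\rfloor\}$, which yields the stated maximum. One must check that the denominator $2-(1+2t)\rho$ is positive on $E_t=(\tfrac1{t+1},\tfrac{2}{2t+1})$, which it is precisely because $\rho<\tfrac{2}{2t+1}$; this is the reason the interval $E_t$ is defined the way it is, and it parallels why $\rho<\tfrac23$ was needed for $u_{\rho,k}$.

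I do not expect a serious obstacle here; the only delicate points are bookkeeping ones. The first is keeping the direction of the inequality $2-(1+2t)\rho>0$ straight so that multiplying through does not flip the comparison — on $E_t$ this sign is guaranteed. The second is confirming that the $j=1$ case is consistent with the general equivalence (it should be, since $v_{\rho,1}=0$ unconditionally and $\tfrac{1-t\rho}{2-(1+2t)\rho}>1$ on $E_t$, which again follows from $\rho<\tfrac{2}{2t+1}$). The third is the harmless strict/non-strict boundary handling, resolved throughout the paper by the irrationality of $\rho$ together with $\tfrac{j\rho}{1-t\rho}\notin\mathbb{Z}$. With those three checks in place the proof is a one-line computation in the style already displayed for Lemma~\ref{lem:equality case in bounds on maxima lemma, >0.5}.
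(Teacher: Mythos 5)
Your proposal is correct and takes essentially the same route as the paper: the same chain of equivalences $v_{\rho,j}=0\iff\left\lfloor\tfrac{j\rho}{1-t\rho}\right\rfloor=2j-1\iff 2j-1<\tfrac{j\rho}{1-t\rho}<2j$, resolved by the positivity of $2-(1+2t)\rho$ on $E_t$, with the same irrationality bookkeeping. One small slip: the monotonicity of $v_{\rho,j}$ follows from the upper increment bound $M_{\rho,j+1}-M_{\rho,j}\leqslant M_{\rho,1}+1$ (restated in the paper just before the lemma), not from the left inequality of Lemma \ref{lem:bounds on maxima, <0.5}; your displayed identity $v_{\rho,j+1}-v_{\rho,j}=M_{\rho,1}+1-\left(M_{\rho,j+1}-M_{\rho,j}\right)$ shows this is the inequality you actually use, so the argument itself is fine.
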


\begin{proof}
Let's prove the second claim. Let $t\in\mathbb{N}\setminus\left\{0, 1\right\}$. For any $j\in\mathbb{N}\setminus\left\{0\right\}$, we have

$$\begin{aligned}
&v_{\rho, j}=0 \\
\iff& \left\lfloor\tfrac{j\rho}{1-\left\lfloor\tfrac{1}{\rho}\right\rfloor\rho}\right\rfloor = 2j-1 \\
\iff& 2j-1<\tfrac{j\rho}{1-\left\lfloor\tfrac{1}{\rho}\right\rfloor\rho}<2j \text{ with }\left\lfloor\tfrac{1}{\rho}\right\rfloor=t \text{ since }\rho\in E_t \\
\iff& -\tfrac{1}{2-(1+2t)\rho}<j<\tfrac{1-t\rho}{2-(1+2t)\rho} \text{ since }2-(1+2t)\rho>0 \text{ because } \rho\in E_t
\end{aligned}$$
\end{proof}

The sequence $\left(v_{\rho, j}\right)_{j\in\mathbb{N}\setminus\left\{0\right\}}$ appears naturally because we will need to know whether or equality can occur in the right inequality of Lemma \ref{lem:bounds on maxima, <0.5}. Note that for any $t\in\mathbb{N}\setminus\left\{0, 1\right\}$, if $\rho\in E_t$, then

$$\left\lfloor\tfrac{1-t\rho}{2-(1+2t)\rho}\right\rfloor>1 \iff \rho>\tfrac{3}{3t+2}$$

Therefore, let's introduce the following notation.

\begin{definition}\label{def:partition of E_t, <0.5}
For any $t\in\mathbb{N}\setminus\left\{0, 1\right\}$, denote

$$\begin{aligned}
F_t &= \left(\mathbb{R}\setminus\mathbb{Q}\right)\cap\left(\tfrac{1}{t+1}, \tfrac{3}{3t+2}\right) \\
G_t &= \left(\mathbb{R}\setminus\mathbb{Q}\right)\cap\left(\tfrac{3}{3t+2}, \tfrac{2}{2t+1}\right) \\
\text{and }F&=\bigcup_{t=2}^\infty F_t
\end{aligned}$$

so that 

$$\begin{aligned}
E_t &= F_t\cup G_t \\
E &= F\cup\left(\bigcup_{t=2}^\infty G_t\right)
\end{aligned}$$
\end{definition}

\begin{proposition}\label{prop:x+y<2d, rho in E}
Suppose $\rho\in E$. If $\rho\in F$, then for any $k, l\in\mathbb{N}\setminus\left\{0\right\}$ such that $k\neq l$, we have 

$$x_{N_{\rho, k}', N_{\rho, l}'}^\rho+y_{N_{\rho, k}', N_{\rho, l}'}^\rho<2d_\rho$$

\noindent Otherwise, if $\rho\in G_t$ with $t\in\mathbb{N}\setminus\left\{0, 1\right\}$, the large inequality holds true, and equality occurs precisely for indices $(k, l)=\left(M_{\rho, i}, M_{\rho, j}\right)$ with distinct \\ $i, j\in\left\{1, 2, \hdots, \left\lfloor\tfrac{1-t\rho}{2-(1+2t)\rho}\right\rfloor\right\}$.
\end{proposition}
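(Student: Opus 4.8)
The plan is to transcribe the proof of Proposition~\ref{prop:x+y<2d, 2/3>rho>0.5} to the regime $\rho<\tfrac12$, with the partition $(Y_{\rho,j})_j$ playing the role of $(I_{\rho,k})_k$ and the integer sequence $(v_{\rho,j})_j$ playing the role of $(u_{\rho,k})_k$. By Lemma~\ref{lem:x_mn+ymn increasing, <0.5} and Corollary~\ref{coro:maxima x_mn+y_mn, <0.5}, it suffices to prove, for every pair $i,j\in\mathbb{N}\setminus\{0\}$, that
$$\max_{k\in Y_{\rho,i},\, l\in Y_{\rho,j},\, k\neq l}\left(x_{N_{\rho,k}',N_{\rho,l}'}^\rho+y_{N_{\rho,k}',N_{\rho,l}'}^\rho\right)\leqslant 2d_\rho=\tfrac{2M_{\rho,1}}{2\left\lfloor\tfrac{1}{\rho}\right\rfloor M_{\rho,1}+1},$$
and to pin down exactly when equality occurs; recall that $\rho\in E$ forces $M_{\rho,1}=1$ by Lemma~\ref{lem:M_rho1=1, <0.5}.

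For a diagonal tile ($i=j$ with $\operatorname{Card}(Y_{\rho,i})\geqslant2$) the computation is verbatim that of Proposition~\ref{prop:x+y<2d, rho not in E}: the bound reduces to $M_{\rho,i}<M_{\rho,1}(2i-1)+\tfrac12=2i-\tfrac12$, which is strict since $M_{\rho,i}\leqslant iM_{\rho,1}+i-1=2i-1$ by Lemma~\ref{lem:bounds on maxima, <0.5}; so diagonal tiles never produce an equality (and the sub-case $\operatorname{Card}(Y_{\rho,i})=1$, in particular $i=1$, is vacuous). For a non-diagonal tile ($i\neq j$), Corollary~\ref{coro:maxima x_mn+y_mn, <0.5} together with $M_{\rho,1}=1$ turns the bound into $M_{\rho,i}+M_{\rho,j}<2M_{\rho,1}(i+j-1)=2(i+j-1)$, while Lemma~\ref{lem:bounds on maxima, <0.5} gives $M_{\rho,i}+M_{\rho,j}\leqslant(i+j)M_{\rho,1}+i+j-2=2(i+j)-2$. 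Hence the weak inequality always holds, with equality precisely when $M_{\rho,i}=2i-1$ and $M_{\rho,j}=2j-1$, that is, when $v_{\rho,i}=v_{\rho,j}=0$, and in that situation the ``more precisely'' clause of Corollary~\ref{coro:maxima x_mn+y_mn, <0.5} shows that the unique maximizing pair is $(k,l)=(M_{\rho,i},M_{\rho,j})$.

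To finish, I would invoke Lemma~\ref{lem:equality case in bounds on maxima lemma, <0.5}: writing $\rho\in E_t$, the sequence $(v_{\rho,j})_j$ is increasing with $v_{\rho,1}=0$, so the set $\{j\in\mathbb{N}\setminus\{0\}:v_{\rho,j}=0\}$ is the initial segment $\bigl\{1,\dots,\bigl\lfloor\tfrac{1-t\rho}{2-(1+2t)\rho}\bigr\rfloor\bigr\}$, and this floor equals $1$ when $\rho\in F_t$ and is $\geqslant2$ when $\rho\in G_t$ --- precisely the dichotomy recorded before Definition~\ref{def:partition of E_t, <0.5}. Therefore, if $\rho\in F$, no two distinct tile indices can both annihilate $v_{\rho,\cdot}$, so the strict inequality holds for all distinct $k,l\in\mathbb{N}\setminus\{0\}$; and if $\rho\in G_t$, equality occurs exactly for the pairs $(k,l)=(M_{\rho,i},M_{\rho,j})$ with distinct $i,j\in\bigl\{1,\dots,\bigl\lfloor\tfrac{1-t\rho}{2-(1+2t)\rho}\bigr\rfloor\bigr\}$, as claimed.

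The only delicate point is completeness of the list of equality points: one has to confirm that within each tile the maximum is attained only at the pair named in Corollary~\ref{coro:maxima x_mn+y_mn, <0.5}, that the diagonal-tile case really gives a strict inequality for every admissible $i$, and that the threshold identity $\bigl\lfloor\tfrac{1-t\rho}{2-(1+2t)\rho}\bigr\rfloor\leqslant1\iff\rho\in F_t$ meshes with the partition $E_t=F_t\cup G_t$. All of these are routine transcriptions of the $\rho>\tfrac12$ argument, so I expect no essential obstacle beyond the bookkeeping.
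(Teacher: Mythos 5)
Your proof is correct and follows essentially the same route as the paper's: reduce to tile maxima via Corollary \ref{coro:maxima x_mn+y_mn, <0.5}, dispose of the diagonal tiles as in Proposition \ref{prop:x+y<2d, rho not in E}, and for off-diagonal tiles translate the equality condition into $v_{\rho,i}=v_{\rho,j}=0$ and apply Lemma \ref{lem:equality case in bounds on maxima lemma, <0.5}. Your extra care in pinning down the unique maximizing pair within each tile and in checking that diagonal tiles never yield equality only makes explicit what the paper leaves implicit.
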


\begin{proof}
Let $i, j\in\mathbb{N}\setminus\left\{0\right\}$. Once again, it is enough to show 

$$\underset{\substack{k\in Y_{\rho, i} \\ l\in Y_{\rho, j} \\ k\neq l}}\max \left(x_{N_{\rho, k}', N_{\rho, l}'}^\rho+y_{N_{\rho, k}', N_{\rho, l}'}^\rho\right)<2d_\rho=\tfrac{2M_{\rho, 1}}{2\left\lfloor\tfrac{1}{\rho}\right\rfloor M_{\rho, 1}+1}$$

The case when $i=j$ works exactly the same way as in the proof of Proposition~\ref{prop:x+y<2d, rho not in E}. If $i\neq j$, then in the proof of Proposition \ref{prop:x+y<2d, rho not in E} we had the condition

$$\begin{aligned}
&\underset{\substack{k\in Y_{\rho, i} \\ l\in Y_{\rho, j} \\ k\neq l}}\max \left(x_{N_{\rho, k}', N_{\rho, l}'}^\rho+y_{N_{\rho, k}', N_{\rho, l}'}^\rho\right)<2d_\rho \\
\iff& M_{\rho, i}+M_{\rho, j}<2M_{\rho, 1}(i+j-1) \\
\iff& M_{\rho, i}+M_{\rho, j}<2(i+j-1) \text{ since } M_{\rho, 1}=1 \text{ because }\rho\in E \\
\iff& M_{\rho, i}<2i-1 \text{ or }M_{\rho, j}<2j-1 \text{ according to Lemma \ref{lem:bounds on maxima, <0.5}} \\
\iff& v_{\rho, i}>0 \text{ or }v_{\rho, j}>0
\end{aligned}$$

Therefore according to Lemma \ref{lem:equality case in bounds on maxima lemma, <0.5}

\begin{itemize}
\item if $\rho\in F$, this condition is fulfilled for any distinct $i, j\in\mathbb{N}\setminus\left\{0\right\}$, since $\left\lfloor\tfrac{1-t\rho}{2-(1+2t)\rho}\right\rfloor=1$.
\item if $\rho\in G_t$ with $t\in\mathbb{N}\setminus\left\{0, 1\right\}$, then this condition is not fulfilled (meaning that we have an equality instead of strict inequality in the initial statement) precisely for distinct $i, j\in\left\{1, 2, \hdots, \left\lfloor\tfrac{1-2\rho}{3\rho-2}\right\rfloor\right\}$ since $\left\lfloor\tfrac{1-t\rho}{2-(1+2t)\rho}\right\rfloor\geqslant2$.
\end{itemize}
\end{proof} 

For any $t\in\mathbb{N}\setminus\left\{0, 1\right\}$, we will denote $j_{\rho, t} = \left\lfloor\tfrac{1-t\rho}{2-(1+2t)\rho}\right\rfloor$ in the following theorem.

\begin{theorem}[When the best diagonal point is an extreme point]\label{thm:is the best diagonal point an extreme point ?, <0.5}
 If $\rho\in G_t$ with $t\in\mathbb{N}\setminus\left\{0, 1\right\}$, then $D_\rho$ is not an extreme point of $\Lambda_\rho$; instead it is the middle point of points $A^\rho_{N_{\rho, M_{\rho, 1}}', N_{\rho, M_{\rho, j_{\rho, t}}}'}$ and $A^\rho_{N_{\rho, M_{\rho, j_{\rho, t}}}', N_{\rho, M_{\rho, 1}}'}$, which belong to line $x+y=2d_\rho$ and are extreme points of $\Lambda_\rho$.  If $\rho\in F$ or $\rho\notin E$, then $D_\rho$ is an extreme point of $\Lambda_\rho$.
\end{theorem}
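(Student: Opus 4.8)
The plan is to prove Theorem~\ref{thm:is the best diagonal point an extreme point ?, <0.5} in direct analogy with the already-established Theorem~\ref{thm:is the best diagonal point an extreme point ?, >0.5}, dividing into the two cases according to whether $\rho$ lies in $F$ (or $\rho\notin E$) or in some $G_t$. In the first case, Proposition~\ref{prop:x+y<2d, rho not in E} and Proposition~\ref{prop:x+y<2d, rho in E} together give that for all distinct $m, n\in I_\rho$ one has $x_{m,n}^\rho+y_{m,n}^\rho<2d_\rho$, while $D_\rho$ itself lies on the supporting line $x+y=2d_\rho$; since the best diagonal point is the unique candidate for a diagonal extreme point (Theorem~\ref{thm:best diagonal point, <0.5}) and the line $x+y=2d_\rho$ meets $L_\rho$ only at $D_\rho$, it follows immediately that $D_\rho$ is an extreme point of $\Lambda_\rho$.

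In the second case, $\rho\in G_t$ for some $t\in\mathbb{N}\setminus\{0,1\}$, and here Proposition~\ref{prop:x+y<2d, rho in E} tells us precisely which points of $L_\rho$ lie on the line $x+y=2d_\rho$: namely the points $A^\rho_{N'_{\rho,M_{\rho,i}}, N'_{\rho,M_{\rho,j}}}$ for distinct $i, j\in\{1,2,\ldots,j_{\rho,t}\}$, together with $D_\rho$ (which corresponds to $i=j=1$ via $M_{\rho,1}=1$ and $N'_{\rho,1}=\lfloor 1/\rho\rfloor$). The next step is to compute the $x$-coordinates of these points explicitly using $v_{\rho,i}=v_{\rho,j}=0$ (so $M_{\rho,i}=2i-1$, $M_{\rho,j}=2j-1$) and $M_{\rho,1}=1$, $\lfloor 1/\rho\rfloor=t$: plugging into the formula from Proposition~\ref{prop:partition, <0.5} one gets $x^\rho_{N'_{\rho,M_{\rho,i}}, N'_{\rho,M_{\rho,j}}}=\frac{2i-1}{(2i-1+2j-1)t+i+j-1}=\frac{2i-1}{(i+j)(2t+1)-2t-1}$ or something of that shape, which should be monotone strictly increasing in $i$ and strictly decreasing in $j$. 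Consequently the extreme left and right endpoints of the segment $\{x+y=2d_\rho\}\cap\Lambda_\rho$ are attained at $(i,j)=(1,j_{\rho,t})$ and $(i,j)=(j_{\rho,t},1)$, which forces $A^\rho_{N'_{\rho,M_{\rho,1}}, N'_{\rho,M_{\rho,j_{\rho,t}}}}$ and $A^\rho_{N'_{\rho,M_{\rho,j_{\rho,t}}}, N'_{\rho,M_{\rho,1}}}$ to be extreme points of $\Lambda_\rho$, and by the diagonal symmetry of $\Lambda_\rho$ their midpoint lies on the diagonal and must be $D_\rho$, which is therefore a nontrivial convex combination and not extreme.

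The only genuinely technical point is verifying the claimed monotonicity of $x^\rho_{N'_{\rho,M_{\rho,i}}, N'_{\rho,M_{\rho,j}}}$ in $i$ and $j$ along the line $x+y=2d_\rho$ — this is a short cross-multiplication once the closed form above is in hand, using that $t\geqslant 2$. One must also be slightly careful that for distinct $i,j$ in the index range the point $A^\rho$ really is in $L_\rho$ (i.e.\ that $v_{\rho,i}=0$ indeed forces $M_{\rho,i}\in Y_{\rho,i}$, which is immediate since $M_{\rho,i}=\max Y_{\rho,i}$ by Lemma~\ref{lem:explicit expression new partition, <0.5}), and that no other points of $L_\rho$ — in particular the "corner" points $(0,0),(0,\rho),(\rho,0)$ — lie on or above the line, which is clear since $\rho<2d_\rho<2\rho$ and $0<2d_\rho$. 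I expect the main obstacle to be purely bookkeeping: keeping the double-indexing $(i,j)$ versus $(k,l)=(M_{\rho,i},M_{\rho,j})$ straight and matching it correctly against the statement of Proposition~\ref{prop:x+y<2d, rho in E}, rather than any conceptual difficulty, since the architecture mirrors the $\rho>\tfrac12$ proof exactly.
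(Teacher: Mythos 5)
Your proposal is correct and follows essentially the same route as the paper's proof: the strict-inequality case of Propositions \ref{prop:x+y<2d, rho not in E} and \ref{prop:x+y<2d, rho in E} handles $\rho\in F$ or $\rho\notin E$, and for $\rho\in G_t$ the paper likewise uses $v_{\rho,i}=v_{\rho,j}=0$ to write $x^\rho_{N'_{\rho,M_{\rho,i}},N'_{\rho,M_{\rho,j}}}=\tfrac{2i-1}{2(i+j-1)\left\lfloor 1/\rho\right\rfloor+i+j-1}$, verifies by cross-multiplication the monotonicity in $i$ and $j$ that you correctly flag as the one technical step, and concludes that the two endpoint indices $(1,j_{\rho,t})$ and $(j_{\rho,t},1)$ give extreme points whose midpoint is $D_\rho$ by diagonal symmetry. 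No gaps.
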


\begin{proof}
\begin{itemize}
\item Suppose $\rho\in F$. Then according to Propositions \ref{prop:x+y<2d, rho not in E} and \ref{prop:x+y<2d, rho in E}, for all distinct $m, n\in I_\rho$, we have

$$x_{m, n}^\rho+y_{m, n}^\rho<2d_\rho$$

and hence $D_\rho$ is an extreme point of $\Lambda_\rho$.

\item Suppose $\rho\in G_t$ with $t\in\mathbb{N}\setminus\left\{0, 1\right\}$.

\begin{figure}[!h]
\begin{center}
\includegraphics[width=11cm]{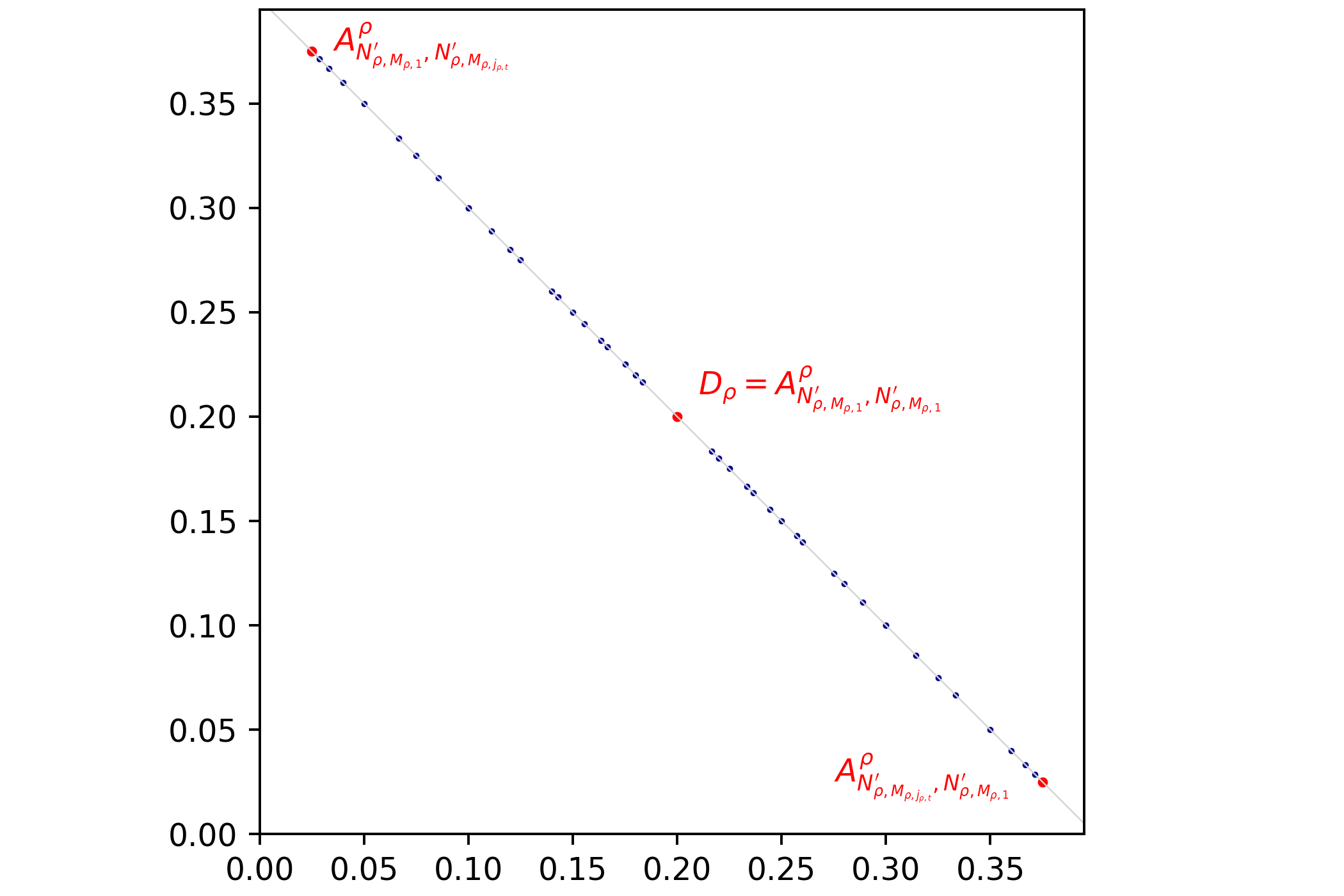}
\end{center}
    \caption{Points $A_{N_{\rho, M_{\rho, i}}', N_{\rho, M_{\rho, j}}'}^\rho$ for distinct $i, j\in\left\{1, 2, \hdots, j_{\rho, t}\right\}$ for $\rho=0.395+\pi.10^{-5}$ ($\rho\in G_t$ with $t=2$). Straight line $y=-x+2d_\rho$ is in gray.}
\end{figure}

According to the proof of Proposition \ref{prop:x+y<2d, rho in E}, we have for any distinct \\ $i, j\in\left\{1, 2, \hdots, j_{\rho, t}\right\}$

$$x_{N_{\rho, M_{\rho, i}}', N_{\rho, M_{\rho, j}}'}^\rho+y_{N_{\rho, M_{\rho, i}}', N_{\rho, M_{\rho, j}}'}^\rho = 2d_\rho$$

which means that such points $A_{N_{\rho, M_{\rho, i}}', N_{\rho, M_{\rho, j}}'}^\rho$ (and $D_\rho$) lie on line $x+y=2d_\rho$ (and those are the only points of $L_\rho$ lying on this line). Additionally, for any such indices $i, j$, we have 

$$\begin{aligned}
x_{N_{\rho, M_{\rho, i}}', N_{\rho, M_{\rho, j}}'}^\rho &= \tfrac{M_{\rho, i}}{\left(M_{\rho, i}+M_{\rho, j}\right)\left\lfloor\tfrac{1}{\rho}\right\rfloor+i+j-1} \\
&= \tfrac{2i-1}{2\left(i+j-1\right)\left\lfloor\tfrac{1}{\rho}\right\rfloor+i+j-1}
\end{aligned}$$

because $v_{\rho, i}=v_{\rho, j}=0$ (see Lemma \ref{lem:equality case in bounds on maxima lemma, <0.5}). This sequence is strictly decreasing with respect to $j$ and strictly increasing with respect to $i$. Let's prove this last claim. For any $i, j\in\mathbb{N}\setminus\left\{0\right\}$, we have

$$\begin{aligned}
&\tfrac{2i+1}{2(i+j)\left\lfloor\tfrac{1}{\rho}\right\rfloor+i+j+2}>\tfrac{2i-1}{2(i+j-1)\left\lfloor\tfrac{1}{\rho}\right\rfloor+i+j+1} \\
\iff&(2i+1)\left(2(i+j-1)\left\lfloor\tfrac{1}{\rho}\right\rfloor+i+j+1\right)>(2i-1)\left(2(i+j)\left\lfloor\tfrac{1}{\rho}\right\rfloor+i+j+2\right) \\
\iff& (4j-2)\left\lfloor\tfrac{1}{\rho}\right\rfloor+2j+3>0
\end{aligned}$$

which is true, since $j\geqslant1$. \\

Therefore $A^\rho_{N_{\rho, M_{\rho, 1}}', N_{\rho, M_{\rho, j_{\rho, t}}}'}$ and $A^\rho_{N_{\rho, M_{\rho, j_{\rho, t}}}', N_{\rho, M_{\rho, 1}}'}$ have respectively minimal and maximal $x$ coordinate among points of $L_\rho$ lying on line $y=-x+2d_\rho$. As a consequence,  $A^\rho_{N_{\rho, M_{\rho, 1}}', N_{\rho, M_{\rho, j_{\rho, t}}}'}$ and $A^\rho_{N_{\rho, M_{\rho, j_{\rho, t}}}', N_{\rho, M_{\rho, 1}}'}$ are extreme points of $\Lambda_\rho$. \\

Finally, by symmetry, the middle point of points $A^\rho_{N_{\rho, M_{\rho, 1}}', N_{\rho, M_{\rho, j_{\rho, t}}}'}, A^\rho_{N_{\rho, M_{\rho, j_{\rho, t}}}', N_{\rho, M_{\rho, 1}}'}$ is on the diagonal and therefore it is precisely $D_\rho$ (which shows that $D_\rho$ is not an extreme point of $\Lambda_\rho$).
\end{itemize}
\end{proof}

Note that for any $t\in\mathbb{N}\setminus\left\{0, 1\right\}$, if $\rho\in G_t$, 

$$A^\rho_{N_{\rho, M_{\rho, 1}}', N_{\rho, M_{\rho, j_{\rho, t}}}'}=\left(\tfrac{M_{\rho, 1}}{\left(M_{\rho, 1}+M_{\rho, j_{\rho, t}}\right)\left\lfloor\tfrac{1}{\rho}\right\rfloor+j_{\rho, t}}, \tfrac{M_{\rho, j_{\rho, t}}}{\left(M_{\rho, 1}+M_{\rho, j_{\rho, t}}\right)\left\lfloor\tfrac{1}{\rho}\right\rfloor+j_{\rho, t}}\right)$$

\section{Roundness}\label{sec:4}

In this section, we focus on the behavior of the set $\Lambda_\rho$ with respect to $\rho$. In order to determine the roundness of $\Lambda'_\rho$ we study how close the set $\Lambda_\rho$ is to a quarter of a circle with radius $\rho$. In this way, we obtain lower and upper bounds on roundness of $\Lambda'_\rho$, which show that sets $\Lambda_\rho$ (for different values of $\rho$) are not proportional "rescalings" of a single set.

Note that the upper-right quarter of a circle with radius $\rho$ and center $(0, 0)$ contains $\Lambda_\rho$, since for any $m, n\in I_\rho$, we have

$$\begin{aligned}
&\left(x_{m, n}^\rho\right)^2+\left(y_{m, n}^\rho\right)^2<\rho^2 \\
\iff&\left\lceil m\rho\right\rceil^2+\left\lceil n\rho\right\rceil^2<\rho^2\left(m+n+1\right)^2 \\
\iff&\left(m\rho+\alpha_m^\rho\right)^2+\left(n\rho+\alpha_n^\rho\right)^2<\rho^2\left(m+n+1\right)^2
\end{aligned}$$

which is true since 

$$\alpha_m^\rho<\rho \text{ and }\alpha_n^\rho<\rho$$

\subsection{Supremum of slopes from point $(0, \rho)$}

In this subsection, we would like to determine the supremum of slopes between point $(0, \rho)$ and any point of $\Lambda_\rho$. Therefore it is sufficient to study slopes between point $(0, \rho)$ and any point $A_{m, n}^\rho$ belonging to $\Lambda_\rho$. 

\begin{definition}\label{def:slope (0, rho)-Amn}
For any $m, n\in I_\rho$, denote $\gamma_{m, n}^\rho$ the slope from point $(0, \rho)$ to point $A_{m, n}^\rho$. Note that 

$$\begin{aligned}
\gamma_{m, n}^\rho&\overset{\text{def}}=\tfrac{y_{m, n}^\rho-\rho}{x_{m, n}^\rho} \\
&=-1+\tfrac{\alpha_m^\rho+\alpha_n^\rho-\rho}{\left\lceil m\rho\right\rceil}
\end{aligned}$$

\end{definition}

\begin{proposition}\label{prop:maximal slope, >0.5}
If $\rho>\tfrac{1}{2}$, then 

$$\underset{m, n\in I_\rho}\sup\gamma_{m, n}^\rho = -\rho$$
\end{proposition}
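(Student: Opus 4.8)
The goal is to compute $\sup_{m,n\in I_\rho}\gamma^\rho_{m,n}$ when $\rho>\tfrac12$, using the formula from Definition \ref{def:slope (0, rho)-Amn}, namely
$$\gamma^\rho_{m,n}=-1+\frac{\alpha^\rho_m+\alpha^\rho_n-\rho}{\lceil m\rho\rceil}.$$
First I would split into the two cases governed by the sign of the numerator $\alpha^\rho_m+\alpha^\rho_n-\rho$. When $\alpha^\rho_m+\alpha^\rho_n\leqslant\rho$ the second term is $\leqslant 0$, so $\gamma^\rho_{m,n}\leqslant -1<-\rho$ and those pairs are harmless. The interesting regime is $\alpha^\rho_m+\alpha^\rho_n>\rho$, where I want to show $\gamma^\rho_{m,n}<-\rho$, i.e.
$$\alpha^\rho_m+\alpha^\rho_n-\rho<(1-\rho)\lceil m\rho\rceil.$$
Using $\alpha^\rho_m<\rho$, $\alpha^\rho_n<\rho$ (both indices lie in $I_\rho$) the left side is $<\rho$, so it suffices that $\lceil m\rho\rceil\geqslant 1$, which holds for all $m\in\mathbb N\setminus\{0\}$ since $\rho>0$; the edge case $m=0$ would give $\lceil m\rho\rceil=0$ but then the formula degenerates (the point $A^\rho_{0,n}$ sits on the segment through $(0,\rho)$), and I should handle it by noting $I_\rho\subseteq\mathbb N\setminus\{0\}$, so $m\geqslant 1$ always. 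Hence the strict bound $\gamma^\rho_{m,n}<-\rho$ holds for \emph{every} pair $m,n\in I_\rho$, giving $\sup\leqslant -\rho$.

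The second half is to show the supremum is actually $-\rho$, i.e. that $-\rho$ is approached. Here I would exhibit a sequence of pairs $(m_j,n_j)\in I_\rho\times I_\rho$ along which $\gamma^\rho_{m_j,n_j}\to-\rho$. The natural choice: fix $n$ with $\alpha^\rho_n$ close to $\rho$ (possible since $(\alpha^\rho_k)$ is dense and the indices with $\alpha^\rho_k<\rho$ are exactly $I_\rho$, accumulating $\rho$ from below), and let $m\to\infty$ within $I_\rho$ with $\alpha^\rho_m$ also close to $\rho$. Then $\alpha^\rho_m+\alpha^\rho_n-\rho\to\rho$ (bounded), while $\lceil m\rho\rceil=m\rho+\alpha^\rho_m\to\infty$, so the fraction $\frac{\alpha^\rho_m+\alpha^\rho_n-\rho}{\lceil m\rho\rceil}\to 0$ and $\gamma^\rho_{m_j,n_j}\to -1$. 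Wait — that yields $-1$, not $-\rho$; so the maximizing direction is the opposite one: I should instead make $\lceil m\rho\rceil$ \emph{small}, i.e. take $m$ as small as possible among indices of $I_\rho$ while making $\alpha^\rho_m+\alpha^\rho_n$ as large as possible (close to $2\rho$). With $\lceil m\rho\rceil=1$ (the smallest possible value, attained for small $m\in I_\rho$ when $\rho>\tfrac12$, e.g. $m=1$ with $\alpha^\rho_1=1-\rho<\rho$) and $\alpha^\rho_n\uparrow\rho$, we get $\gamma^\rho_{m,n}=-1+(\alpha^\rho_1+\alpha^\rho_n-\rho)/1=-1+(1-\rho)+\alpha^\rho_n-\rho\to -1+(1-\rho)+\rho-\rho=-\rho$. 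So the right sequence is $m=1$ fixed, $n$ ranging over $I_\rho$ with $\alpha^\rho_n\to\rho^-$.

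\textbf{Main obstacle.} The delicate point is verifying that one can simultaneously keep $\lceil m\rho\rceil$ at its minimal value (here $=1$) \emph{and} drive $\alpha^\rho_n$ to $\rho^-$ through indices genuinely lying in $I_\rho$ — i.e. that the candidate extremal pairs are admissible. This rests on two facts from Section \ref{sec:1}: that $1\in I_\rho$ when $\rho>\tfrac12$ (since $\alpha^\rho_1=1-\rho<\rho$), which is immediate; and that $\sup\{\alpha^\rho_n:n\in I_\rho\}=\rho$, which follows from the density of $(\alpha^\rho_k)_{k}$ in $[0,1]$ together with the fact that $I_\rho=\{k:\alpha^\rho_k<\rho\}$ — so values $\alpha^\rho_n$ just below $\rho$ occur infinitely often with $n\in I_\rho$. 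Once these are in place, combining the upper bound $\gamma^\rho_{m,n}<-\rho$ for all admissible pairs with the approximating sequence $\gamma^\rho_{1,n_j}\to-\rho$ gives $\sup_{m,n\in I_\rho}\gamma^\rho_{m,n}=-\rho$, as claimed. I would also double-check the boundary points $(0,\rho)$ and $(\rho,0)$ of $\Lambda_\rho$ separately, but the slope from $(0,\rho)$ to $(\rho,0)$ is exactly $-1<-\rho$, consistent with the bound.
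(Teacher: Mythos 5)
Your overall strategy (an upper bound valid for every pair, plus an approximating sequence obtained from the density of $(\alpha_n^\rho)$) is the right one, and your second half is correct: fixing $m=1$, so that $\lceil m\rho\rceil=1$ and $\alpha_1^\rho=1-\rho$, and letting $\alpha_n^\rho\to\rho^-$ along indices of $I_\rho$ indeed gives $\gamma_{1,n}^\rho\to-\rho$; this is essentially the paper's extremal family (the paper uses $m\in I_{\rho,0}$, for which the ratio $\alpha_m^\rho/\lceil m\rho\rceil$ equals $1-\rho$, exactly as for $m=1$).

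However, the upper-bound half has a genuine gap. You reduce to showing $\alpha_m^\rho+\alpha_n^\rho-\rho<(1-\rho)\lceil m\rho\rceil$, bound the left side by $\rho$, and claim that $\lceil m\rho\rceil\geqslant1$ suffices. But $\lceil m\rho\rceil\geqslant1$ only gives a right-hand side $\geqslant1-\rho$, and since we are in the regime $\rho>\tfrac12$ we have $\rho>1-\rho$, so the two estimates do not combine. Concretely, for $\rho$ close to $1$ and $m=1$ your sufficient condition would require $\rho\leqslant(1-\rho)\cdot1$, which is false; yet $m=1$ is precisely the index used in your approximating sequence, so these pairs cannot be discarded. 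The inequality you want is true, but it needs the sharper estimate
$$\alpha_m^\rho+\alpha_n^\rho-\rho<\alpha_m^\rho=\lceil m\rho\rceil-m\rho\leqslant(1-\rho)\lceil m\rho\rceil,$$
where the last step is equivalent to $\lceil m\rho\rceil\leqslant m$, which holds because $\rho<1$. Equivalently, one must show $\sup_{m\in I_\rho}\alpha_m^\rho/\lceil m\rho\rceil=1-\rho$, which is exactly the computation the paper carries out (via $\alpha_m^\rho/\lceil m\rho\rceil=1-\rho\,m/(m-k)$ on each block $I_{\rho,k}$). With that step inserted, your argument closes.
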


\begin{proof}
\begin{itemize}
\item We remind the reader that for any $n\in I_\rho$, we have $\alpha_n^\rho<\rho$. Moreover, according to Proposition \ref{prop:partition, >0.5}, for any $k\in\mathbb{N}$, we have 

$$\begin{aligned}
\alpha_{N_{\rho, k}}^\rho &= N_{\rho, k}-k-N_{\rho, k}\rho \\
&= \left\lfloor\tfrac{k+1}{1-\rho}\right\rfloor(1-\rho)-(k+1)+\rho \\
&= \rho-D\left(\tfrac{k+1}{1-\rho}\right)(1-\rho)
\end{aligned}$$

Therefore, since $\tfrac{1}{1-\rho}\notin\mathbb{Q}$, sequence $\left(D\left(\tfrac{k+1}{1-\rho}\right)\right)_{k\in\mathbb{N}}$ is dense in $[0, 1]$, thus we have a subsequence $(k_l)_{l\in\mathbb{N}}$ such that 

$$D\left(\tfrac{k_l+1}{1-\rho}\right)\underset{l\to\infty}\rightarrow0 \textit{ i.e. }\alpha_{N_{\rho, k_l}}^\rho\underset{l\to\infty}\rightarrow \rho$$

Hence,

$$\begin{aligned}
\underset{m, n\in I_\rho}\sup\gamma_{m, n}^\rho &= -1+\underset{m\in I_\rho}\sup\underset{n\in I_\rho}\sup\tfrac{\alpha_m^\rho+\alpha_n^\rho-\rho}{\lceil m\rho\rceil} \\
&=-1+\underset{m\in I_\rho}\sup\tfrac{\alpha_m^\rho}{\lceil m\rho\rceil}
\end{aligned}$$

\item Additionally, for any $k\in\mathbb{N}$ and $m\in I_{\rho, k}$, we have

$$\begin{aligned}
\tfrac{\alpha_m^\rho}{\lceil m\rho\rceil} &= \tfrac{m-k-m\rho}{m-k} \\
&= 1-\rho\tfrac{m}{m-k}
\end{aligned}$$

thus sequence $\left(\tfrac{\alpha_m^\rho}{\lceil m\rho\rceil}\right)_{m\in I_\rho}$ is increasing on each interval $I_{\rho, k}$ where $k\in\mathbb{N}$. Hence 

$$\underset{m\in I_\rho}\sup\tfrac{\alpha_m^\rho}{\lceil m\rho\rceil} = \underset{k\in\mathbb{N}}\sup\tfrac{\alpha_{N_{\rho, k}}^\rho}{\lceil N_{\rho, k}\rho\rceil}$$

Then for any $k\in\mathbb{N}\setminus\left\{0\right\}$, we have

$$\tfrac{\alpha^\rho_{N_{\rho, k}}}{\lceil N_{\rho, k}\rho\rceil}<\tfrac{\alpha^\rho_{N_{\rho, 0}}}{\lceil N_{\rho, 0}\rho\rceil}\iff\tfrac{N_{\rho, k}}{N_{\rho, k}-k}>1$$

which is true. Therefore

$$\begin{aligned}
\underset{k\in\mathbb{N}}\sup\tfrac{\alpha_{N_{\rho, k}}^\rho}{\lceil N_{\rho, k}\rho\rceil} &= \tfrac{\alpha_{N_{\rho, 0}}^\rho}{\lceil N_{\rho, 0}\rho\rceil} \\ 
&= 1-\rho
\end{aligned}$$

and finally

$$\underset{m, n\in I_\rho}\sup\gamma_{m, n}^\rho = -\rho$$

\end{itemize}
\end{proof}

\begin{proposition}\label{prop:maximal slope, <0.5}
If $\rho<\tfrac{1}{2}$, then 

$$\underset{m, n\in I_\rho}\sup\gamma_{m, n}^\rho = -\rho\left\lfloor\tfrac{1}{\rho}\right\rfloor$$
\end{proposition}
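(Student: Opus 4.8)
The plan is to transcribe the argument of Proposition~\ref{prop:maximal slope, >0.5} to the case $\rho<\tfrac{1}{2}$, replacing the partition $(I_{\rho,k})_{k\in\mathbb N}$ used there by the partition of $I_\rho$ furnished by Proposition~\ref{prop:partition, <0.5}.

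First I would reduce the double supremum to a supremum over the first index alone. Since $\lceil m\rho\rceil\geqslant1$ for every $m\in I_\rho$, the formula of Definition~\ref{def:slope (0, rho)-Amn} gives
$$\underset{m,n\in I_\rho}\sup\gamma_{m,n}^\rho=-1+\underset{m\in I_\rho}\sup\;\frac{\alpha_m^\rho-\rho+\underset{n\in I_\rho}\sup\,\alpha_n^\rho}{\lceil m\rho\rceil},$$
and, since the sequence $(\alpha_n^\rho)_{n\in\mathbb N}$ is dense in $[0,1]$ while $I_\rho=\{n\in\mathbb N\setminus\{0\}:\alpha_n^\rho<\rho\}$, every interval $(\rho-\varepsilon,\rho)$ with $\varepsilon>0$ meets $\{\alpha_n^\rho:n\in\mathbb N\}$, so $\underset{n\in I_\rho}\sup\,\alpha_n^\rho=\rho$. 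Hence the problem collapses to the evaluation of $\underset{m\in I_\rho}\sup\,\tfrac{\alpha_m^\rho}{\lceil m\rho\rceil}$, exactly as in the case $\rho>\tfrac{1}{2}$.

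Next I would substitute the explicit description of $I_\rho$. By Proposition~\ref{prop:partition, <0.5} one has $I_\rho=\{N_{\rho,k}':k\in\mathbb N\setminus\{0\}\}$, and if $k\in Y_{\rho,i}$ then $\lceil N_{\rho,k}'\rho\rceil=k$ and $\alpha_{N_{\rho,k}'}^\rho=ks_\rho-(i-1)\rho$, so that
$$\frac{\alpha_{N_{\rho,k}'}^\rho}{\lceil N_{\rho,k}'\rho\rceil}=s_\rho-\frac{(i-1)\rho}{k}.$$
For $i\geqslant2$ the subtracted term is strictly positive, hence this ratio is $<s_\rho$; for $i=1$, i.e.\ for $k\in Y_{\rho,1}$ (a set which contains $1$), it equals $s_\rho$. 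Therefore $\underset{m\in I_\rho}\sup\,\tfrac{\alpha_m^\rho}{\lceil m\rho\rceil}=s_\rho=1-\lfloor\tfrac{1}{\rho}\rfloor\rho$, the value being attained at $m=N_{\rho,1}'=\lfloor1/\rho\rfloor$, and substituting back gives $\underset{m,n\in I_\rho}\sup\gamma_{m,n}^\rho=-1+s_\rho=-\rho\lfloor\tfrac{1}{\rho}\rfloor$.

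The only point that needs a touch of care — and which I do not expect to be a real obstacle — is that the inner supremum over $n$ is not attained, so that no single pair $(m,n)$ realizes the value $-\rho\lfloor1/\rho\rfloor$; to confirm it is indeed the supremum one fixes $m=\lfloor1/\rho\rfloor$, for which $\lceil m\rho\rceil=1$, and lets $\alpha_n^\rho\uparrow\rho$ along $n\in I_\rho$, which yields $\gamma_{m,n}^\rho=-1+s_\rho+\alpha_n^\rho-\rho\longrightarrow-\rho\lfloor1/\rho\rfloor$. Everything else is the monotonicity bookkeeping already carried out for $\rho>\tfrac{1}{2}$, now performed over the intervals $Y_{\rho,j}$ rather than the $I_{\rho,k}$.
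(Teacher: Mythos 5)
Your argument is correct and follows essentially the same route as the paper: reduce the double supremum to $\sup_{m\in I_\rho}\alpha_m^\rho/\lceil m\rho\rceil$ using the density of $(\alpha_n^\rho)_n$ near $\rho$, then use the explicit description of $I_\rho$ to locate the maximum at $m=\lfloor 1/\rho\rfloor$ with value $s_\rho=1-\rho\lfloor 1/\rho\rfloor$. The only cosmetic difference is that the paper establishes the final inequality by showing $\lfloor k/\rho\rfloor\geqslant k\lfloor 1/\rho\rfloor$ directly, whereas you route it through the $Y_{\rho,i}$ bookkeeping of Proposition~\ref{prop:partition, <0.5}; these are equivalent.
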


\begin{proof}
\begin{itemize}
\item We remind the reader that according to Proposition \ref{prop:partition, <0.5}

$$I_\rho = \left\{\left\lfloor\tfrac{k}{\rho}\right\rfloor\,|\,k\in\mathbb{N}\setminus\left\{0\right\}\right\}$$

and for any $k\in\mathbb{N}\setminus\left\{0\right\}$, we have 

$$\begin{aligned}
\alpha_{\left\lfloor\tfrac{k}{\rho}\right\rfloor}^\rho &= k-\left\lfloor\tfrac{k}{\rho}\right\rfloor\rho \\
&= \rho D\left(\tfrac{k}{\rho}\right) \\
\end{aligned}$$

Therefore, since $\tfrac{1}{\rho}\notin\mathbb{Q}$, sequence $\left(D\left(\tfrac{k}{\rho}\right)\right)_{k\in\mathbb{N}\setminus\left\{0\right\}}$ is dense in $[0, 1]$, thus we have a subsequence $(k_l)_{l\in\mathbb{N}}$ such that 

$$D\left(\tfrac{k_l}{\rho}\right)\underset{l\to\infty}\rightarrow1 \textit{ i.e. }\alpha_{\left\lfloor\tfrac{k_l}{\rho}\right\rfloor}^\rho\underset{l\to\infty}\rightarrow \rho$$

Hence,

$$\begin{aligned}
\underset{m, n\in I_\rho}\sup\gamma_{m, n}^\rho &= -1+\underset{m\in I_\rho}\sup\underset{n\in I_\rho}\sup\tfrac{\alpha_m^\rho+\alpha_n^\rho-\rho}{\lceil m\rho\rceil} \\
&=-1+\underset{m\in I_\rho}\sup\tfrac{\alpha_m^\rho}{\lceil m\rho\rceil} \\
&=-\underset{k\in\mathbb{N}\setminus\left\{0\right\}}\inf\tfrac{\rho}{k}\left\lfloor\tfrac{k}{\rho}\right\rfloor
\end{aligned}$$

\item Additionally, for any $k\in\mathbb{N}\setminus\left\{0\right\}$, we have 

$$\begin{aligned}
\left\lfloor\tfrac{k}{\rho}\right\rfloor&>\tfrac{k}{\rho}-1 \\
&>k\left\lfloor\tfrac{1}{\rho}\right\rfloor-1 \\
& \\
\textit{i.e. }\left\lfloor\tfrac{k}{\rho}\right\rfloor&\geqslant k\left\lfloor\tfrac{1}{\rho}\right\rfloor \\
\text{hence }\tfrac{\rho}{k}\left\lfloor\tfrac{k}{\rho}\right\rfloor&\geqslant\rho\left\lfloor\tfrac{1}{\rho}\right\rfloor
\end{aligned}$$

Therefore

$$\underset{m, n\in I_\rho}\sup\gamma_{m, n}^\rho = -\rho\left\lfloor\tfrac{1}{\rho}\right\rfloor$$
\end{itemize}

\end{proof}

\begin{definition}\label{def:best slope}
In the following,  $\gamma_\rho$ deontes the supremum of slopes obtained in Propositions \ref{prop:maximal slope, >0.5} and \ref{prop:maximal slope, <0.5}.
\end{definition}

Note that according to Propositions \ref{prop:maximal slope, >0.5} and \ref{prop:maximal slope, <0.5}, the following formula is true for any $\rho\in(0, 1)\cap\left(\mathbb{R}\setminus\mathbb{Q}\right)$ :

$$\gamma_\rho = -\rho\left\lfloor\tfrac{1}{\rho}\right\rfloor$$

\subsection{Bounds on $R_\rho$, the roundness of $\Lambda'_\rho$}

Thanks to the supremum of slopes from the point $(0, \rho)$ determined in the previous subsection, we obtain the roundness bounds of $\Lambda_\rho$ stated in Theorem \ref{thm:bounds on roundness}. 
\begin{figure}[!h]
\begin{center}
\includegraphics[width=11cm]{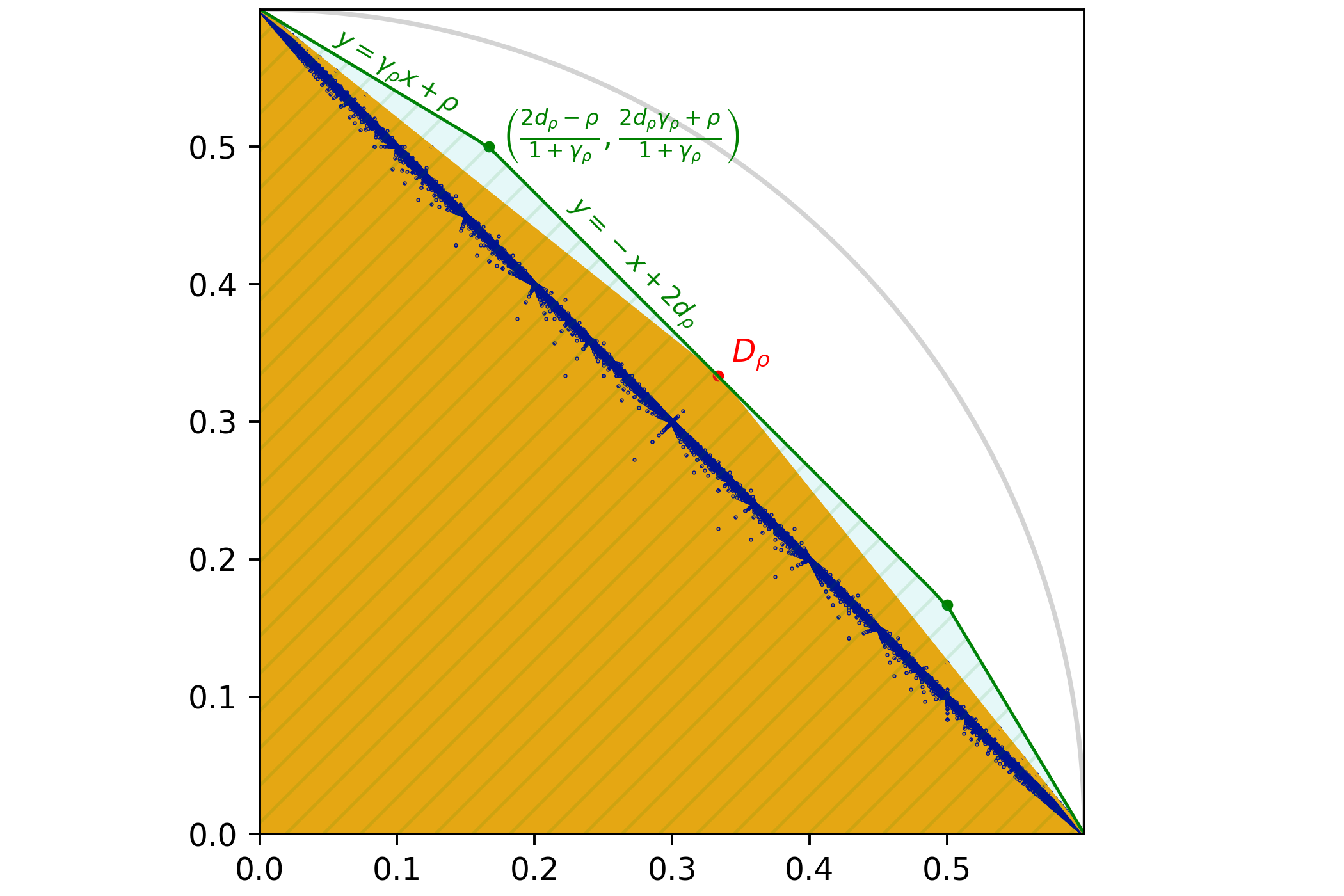}
\end{center}
    \caption{$\rho=0.6-\pi.10^{-5}$. The orange area is defined by the polygon with vertices $(0, 0), (0, \rho), (\rho, 0), D_\rho$, where $D_\rho=(d_\rho,d_\rho)$. The hatched area is defined by the polygon with vertices $(0, 0), (0, \rho), (\rho, 0), \left(\tfrac{2d_\rho-\rho}{1+\gamma_\rho}, \tfrac{2d_\rho\gamma_\rho+\rho}{1+\gamma_\rho}\right), \left(\tfrac{2d_\rho\gamma_\rho+\rho}{1+\gamma_\rho}, \tfrac{2d_\rho-\rho}{1+\gamma_\rho}\right)$. The hatched area contains $\Lambda_\rho$, which contains the orange area.}
\end{figure}
\begin{proof}(of Theorem \ref{thm:bounds on roundness})

\begin{itemize}

\item According to Theorems \ref{thm:best diagonal point, >0.5} and \ref{thm:best diagonal point, <0.5}, we know that the polygon with vertices $(0, 0), (0, \rho), (\rho, 0), D_\rho$ is contained in $\Lambda_\rho$. Additionally, its area is $\rho d_\rho$, hence 

$$R_\rho\geqslant \tfrac{4}{\pi}\tfrac{d_\rho}{\rho}$$

\item Note that the intersection point of straight lines $y=-x+2d_\rho$ and $y=\gamma_\rho x+\rho$ is $\left(\tfrac{2d_\rho-\rho}{1+\gamma_\rho}, \tfrac{2d_\rho\gamma_\rho+\rho}{1+\gamma_\rho}\right)$. Thanks to Propositions \ref{prop:maximal slope, >0.5} and \ref{prop:maximal slope, <0.5} (and since $\Lambda_\rho$ is symmetrical with respect to the diagonal), we know that $\Lambda_\rho$ is contained in polygon with vertices $(0, 0), (0, \rho), (\rho, 0), \left(\tfrac{2d_\rho-\rho}{1+\gamma_\rho}, \tfrac{2d_\rho\gamma_\rho+\rho}{1+\gamma_\rho}\right), \left(\tfrac{2d_\rho\gamma_\rho+\rho}{1+\gamma_\rho}, \tfrac{2d_\rho-\rho}{1+\gamma_\rho}\right)$. By symmetry, it's area is 

$$\begin{aligned}
&2\left(\tfrac{1}{2}\rho\tfrac{2d_\rho-\rho}{1+\gamma_\rho}+\tfrac{1}{2}\sqrt{2}d_\rho\sqrt{\left(d_\rho-\tfrac{2d_\rho-\rho}{1+\gamma_\rho}\right)^2+\left(d_\rho-\tfrac{2d_\rho\gamma_\rho+\rho}{1+\gamma_\rho}\right)^2}\right) \\
=&\,\rho\tfrac{2d_\rho-\rho}{1+\gamma_\rho}+\tfrac{2d_\rho}{1+\gamma_\rho}\left(d_\rho\left(\gamma_\rho-1\right)+\rho\right) \\
=&\,\tfrac{1}{1+\gamma_\rho}\left(-\rho^2+4\rho d_\rho-2\left(1-\gamma_\rho\right)d_\rho^2\right)
\end{aligned}$$


Note that at the second line, we used the fact that $d_\rho\left(\gamma_\rho-1\right)+\rho\geqslant0$. Indeed, we have 

$$\begin{aligned}
&d_\rho\left(\gamma_\rho-1\right)+\rho\geqslant0 \\
\iff&\gamma_\rho\geqslant\tfrac{d_\rho-\rho}{d_\rho}
\end{aligned}$$

which is true by definition of $\gamma_\rho$ since $\tfrac{d_\rho-\rho}{d_\rho}$ is the slope from point $(0, \rho)$ to point $D_\rho$. Therefore, since $\gamma_\rho=-\rho\left\lfloor\tfrac{1}{\rho}\right\rfloor$, we obtain the following upper bound 

$$R_\rho\leqslant\tfrac{4}{\pi}\tfrac{1}{1-\rho\left\lfloor\tfrac{1}{\rho}\right\rfloor}\left(-1+4\tfrac{d_\rho}{\rho}-2\left(1+\rho\left\lfloor\tfrac{1}{\rho}\right\rfloor\right)\tfrac{d_\rho^2}{\rho^2}\right)$$

\end{itemize}
\end{proof}

Since the function $\rho\mapsto d_\rho$ has jumps around rational values $\tfrac{1}{k}$ and $1-\tfrac{1}{k}$ for any $k\in\mathbb{N}\setminus\left\{0, 1\right\}$, we conclude with the following

\begin{conjecture}[Jumps of roundness function]\label{conj:jumps of roundness function}
The roundness function $\rho\mapsto R_\rho\left(\Lambda_\rho\right)$ has jumps around $\tfrac{1}{k}$ and $1-\tfrac{1}{k}$ for any $k\in\mathbb{N}\setminus\left\{0, 1\right\}$.
\end{conjecture}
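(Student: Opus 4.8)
The plan is to turn the two‑sided estimate of Theorem \ref{thm:bounds on roundness} into a statement about one‑sided limits of $R_\rho$. Write $B^-(\rho):=\tfrac{4}{\pi}\tfrac{d_\rho}{\rho}$ and $B^+(\rho):=\tfrac{4}{\pi}\tfrac{1}{1-\rho\lfloor 1/\rho\rfloor}\bigl(-1+4\tfrac{d_\rho}{\rho}-2(1+\rho\lfloor 1/\rho\rfloor)\tfrac{d_\rho^2}{\rho^2}\bigr)$, so that $B^-(\rho)\leqslant R_\rho\leqslant B^+(\rho)$ for every irrational $\rho\in(0,1)$. The key structural point is that both $\lfloor 1/\rho\rfloor$ and the explicit value of $d_\rho$ furnished by Theorems \ref{thm:best diagonal point, >0.5} and \ref{thm:best diagonal point, <0.5} are \emph{locally constant}: they change only at the rationals $\tfrac1k,\ 1-\tfrac1k$ $(k\geqslant2)$ and at the other rationals where the inner floor of the $\rho<\tfrac12$ formula for $d_\rho$ jumps. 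Hence on each component of the complement of this countable set, $B^-$ and $B^+$ are explicit rational functions of $\rho$ with genuine one‑sided limits at every rational endpoint. Fixing $c\in\{\tfrac1k,\ 1-\tfrac1k\}$, I will (i) compute $\lim_{\rho\to c^+}R_\rho$ exactly by squeezing $B^-$ against $B^+$, (ii) do the same, or obtain a one‑sided estimate, as $\rho\to c^-$, and (iii) check that the two values differ.

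On a right‑neighbourhood of $c$ one reads off from Theorems \ref{thm:best diagonal point, >0.5}--\ref{thm:best diagonal point, <0.5} that either $\lfloor 1/\rho\rfloor=1$ and $d_\rho\equiv\tfrac{k-1}{2k-1}$ (if $c=1-\tfrac1k$, including $c=\tfrac12$ with $k=2$), or $\lfloor 1/\rho\rfloor=k-1$ and $d_\rho\equiv\tfrac1{2k-1}$ (if $c=\tfrac1k$, $k\geqslant3$); in both cases the computation already recorded after Theorem \ref{thm:bounds on roundness} gives $\lim_{\rho\to c^+}B^-=\lim_{\rho\to c^+}B^+=\tfrac{4k}{\pi(2k-1)}$, so $\lim_{\rho\to c^+}R_\rho=\tfrac{4k}{\pi(2k-1)}$. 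For the matching left‑hand limit at $c=1-\tfrac1k$ with $k\geqslant3$: on $(\tfrac{k-2}{k-1},\tfrac{k-1}{k})$ one has $\lfloor 1/\rho\rfloor=1$ and $d_\rho\equiv\tfrac{k-2}{2k-3}$, so $B^+$ is a rational function of $\rho$ there and $U^-:=\lim_{\rho\to c^-}B^+(\rho)$ exists. With $u:=\lim_{\rho\to c^-}\tfrac{d_\rho}{\rho}=\tfrac{k(k-2)}{(k-1)(2k-3)}$ one has the algebraic identity $k-(2k-1)u=\tfrac{k}{(k-1)(2k-3)}$, and a short manipulation of $B^+$ yields $\tfrac{4k}{\pi(2k-1)}-U^-=\tfrac{8\,(k-(2k-1)u)^2}{\pi(2k-1)}=\tfrac{8k^2}{\pi(2k-1)(k-1)^2(2k-3)^2}>0$. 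Therefore $\limsup_{\rho\to c^-}R_\rho\leqslant U^-<\tfrac{4k}{\pi(2k-1)}=\lim_{\rho\to c^+}R_\rho$, which already forces $R_\rho$ to be discontinuous at every $1-\tfrac1k$, $k\geqslant3$; for $k=3$ one checks in addition that $B^-$ converges to $\tfrac2\pi=U^-$, so $\lim_{\rho\to(2/3)^-}R_\rho$ genuinely exists.

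The delicate case is the left‑hand limit at $c=\tfrac1k$ (which subsumes $c=\tfrac12$). Here $\lfloor 1/\rho\rfloor=k$ and, by Theorem \ref{thm:best diagonal point, <0.5}, $d_\rho=\tfrac{M}{2Mk+1}$ with $M=\lfloor\tfrac{\rho}{1-k\rho}\rfloor$, which is constant exactly on $\rho\in[\tfrac{M}{Mk+1},\tfrac{M+1}{(M+1)k+1})$ and satisfies $M\to\infty$ as $\rho\to c^-$. Both endpoint values of $\tfrac{d_\rho}{\rho}$ on the $M$‑th sub‑interval tend to $\tfrac12$, so $B^-(\rho)\to\tfrac2\pi$. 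The trouble is that $B^+$ degenerates, since $\tfrac1{1-k\rho}\to\infty$; to control it I would set $\tfrac{d_\rho}{\rho}=\tfrac12+\varepsilon$ and $k\rho=1-\delta$, so that $-1+4\tfrac{d_\rho}{\rho}-2(1+k\rho)\bigl(\tfrac{d_\rho}{\rho}\bigr)^2=\tfrac{\delta}{2}-4\varepsilon^2+2\delta\varepsilon+2\delta\varepsilon^2$ and hence $B^+(\rho)=\tfrac{4}{\pi}\bigl(\tfrac12+2\varepsilon+2\varepsilon^2-4\tfrac{\varepsilon^2}{\delta}\bigr)$. From the closed‑form values one gets, uniformly over the $M$‑th sub‑interval, $\varepsilon=\Theta(1/(Mk))$ and $\delta=\Theta(1/(Mk))$ --- at the left endpoint, for instance, $\varepsilon=\tfrac1{2(2Mk+1)}$ and $\delta=\tfrac1{Mk+1}$ --- so $\varepsilon\to0$ and $\varepsilon^2/\delta=\Theta(1/(Mk))\to0$, whence $B^+(\rho)\to\tfrac2\pi$ as $\rho\to c^-$. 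Squeezing, $\lim_{\rho\to(1/k)^-}R_\rho=\tfrac2\pi<\tfrac{4k}{\pi(2k-1)}=\lim_{\rho\to(1/k)^+}R_\rho$, a jump of height $\tfrac{2}{\pi(2k-1)}$, and combined with the previous paragraph this establishes the discontinuity (jump) of $R_\rho$ at every $\tfrac1k$ and $1-\tfrac1k$, $k\geqslant2$.

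The hard part is precisely the estimate of the degenerate upper bound near $\tfrac1k^-$ in the last paragraph: away from $\tfrac1k$ the argument is just a comparison of one‑sided limits of explicit rational functions, but at $\tfrac1k^-$ one must extract the finite limit $\tfrac2\pi$ of $B^+$ through the family of sub‑intervals accumulating at $\tfrac1k$, which requires the $\Theta$‑bounds on $\varepsilon$ and $\delta$ to be made uniform in $M$ (this uses only the closed‑form values of $d_\rho$ at sub‑interval endpoints, so it is a finite computation, but it is where care is needed). One should also note, to avoid over‑claiming, that the same method shows $R_\rho$ jumps at every rational at which $d_\rho$ jumps --- e.g.\ at $\tfrac{2}{2k+1}$, where an analogous computation gives $\lim_{\rho\to(2/(2k+1))^-}R_\rho=\tfrac2\pi$ and $\lim_{\rho\to(2/(2k+1))^+}R_\rho=\tfrac{4(2k+1)}{\pi(4k+1)}$ --- so the statement asserts that $\tfrac1k$ and $1-\tfrac1k$ are among the jump points of $R_\rho$, not that they are the only ones.
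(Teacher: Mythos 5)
The statement you are proving is left as a \emph{conjecture} in the paper: the authors offer no argument beyond the observation that $\rho\mapsto d_\rho$ jumps at the rationals $\tfrac1k$ and $1-\tfrac1k$, which by itself does not control $R_\rho$ from both sides. Your proposal actually supplies a proof, and it does so using only the paper's own Theorem \ref{thm:bounds on roundness} (read, as you correctly do, with $d_\rho$ equal to the best--diagonal--point coordinate from Theorems \ref{thm:best diagonal point, >0.5} and \ref{thm:best diagonal point, <0.5} in the regime $\rho<\tfrac12$, not the literal $\rho>\tfrac12$ formula printed in the theorem statement). I checked the two computations on which everything hinges and both are right: the algebraic identity $\tfrac{4k}{\pi(2k-1)}-U^-=\tfrac{8(k-(2k-1)u)^2}{\pi(2k-1)}$ with $k-(2k-1)u=\tfrac{k}{(k-1)(2k-3)}$ at $\rho\to(1-\tfrac1k)^-$, and the expansion $B^+=\tfrac4\pi\bigl(\tfrac12+2\varepsilon+2\varepsilon^2-\tfrac{4\varepsilon^2}{\delta}\bigr)$ at $\rho\to(\tfrac1k)^-$ together with the uniform endpoint bounds $\varepsilon\leqslant\tfrac1{2(2Mk+1)}$, $\delta\geqslant\tfrac1{(M+1)k+1}$ on the $M$-th sub-interval, which give $\varepsilon^2/\delta=O(1/(Mk))\to0$ even though $\delta\approx4\varepsilon$ makes this borderline. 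So the degenerate upper bound does squeeze to $\tfrac2\pi$, and the jump of height $\tfrac{2}{\pi(2k-1)}$ at each $\tfrac1k$ follows.

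One caveat you should state explicitly rather than leave implicit: at $c=1-\tfrac1k$ with $k\geqslant4$ the left-hand limits of $B^-$ and $B^+$ do not coincide (they equal $\tfrac4\pi u$ and $U^-$ respectively, which differ), so your argument yields only $\limsup_{\rho\to c^-}R_\rho\leqslant U^-<\lim_{\rho\to c^+}R_\rho$, i.e.\ a genuine gap between the values of $R_\rho$ on the two sides of $c$, but not the existence of $\lim_{\rho\to c^-}R_\rho$. This establishes discontinuity, and in my view settles what the conjecture's informal ``has jumps around'' asserts, but if ``jump'' is read as ``both one-sided limits exist and differ'' then the left limit at $1-\tfrac1k$, $k\geqslant4$, remains open and would require information about $\Lambda_\rho$ finer than the two bounds of Theorem \ref{thm:bounds on roundness}. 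With that caveat recorded, your argument upgrades Conjecture \ref{conj:jumps of roundness function} (in the discontinuity sense) to a theorem.
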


\section{Conclusions}

We determined the expression of the diagonal point of $L_\rho$ with maximal coordinates, namely the best diagonal point (Theorems \ref{thm:best diagonal point, >0.5} and \ref{thm:best diagonal point, <0.5}). We proved that either the best diagonal point is an extreme point of $\Lambda_\rho$, or $\Lambda_\rho$ has no extreme point on the diagonal and has an edge whose middle point is the best diagonal point, and that both cases occur (Theorems \ref{thm:is the best diagonal point an extreme point ?, >0.5} and \ref{thm:is the best diagonal point an extreme point ?, <0.5}). This shows that the picture given by Kwapisz in \cite{Kwapisz2} is misleading (on that picture, $\Lambda_\rho$ does not have a diagonal vertex). We determined lower and upper bounds for the roundness of $\Lambda'_\rho$ (Theorem \ref{thm:bounds on roundness}), which confirms that the idea that sets $\Lambda_\rho$ would be "rescalings" of one single set is wrong. This is particularly underlined by Conjecture \ref{conj:jumps of roundness function} and Figure \ref{fig:roundness}. \\

\section{Appendix}
In this section we sketch how to adjust Kwapisz's construction to get $C^1$-diffeomorphisms with rotation sets that lie in all four quadrants of the plane. 
\begin{proof}(of Theorem \ref{four})
First, we observe the following.
\begin{claim}
\[
\begin{array}{l}
\text{Conv}\left(\left\{(0, \pm\rho), (\pm\rho, 0)\right\}\cup\left\{\left(\tfrac{\pm\lceil m\rho\rceil}{m+n+1}, \tfrac{\pm\lceil n\rho\rceil}{m+n+1}\right)\,|\,m, n\in\mathbb{N}_0, \alpha_m^\rho, \alpha_n^\rho<\rho\right\}\right) = \\
\text{Conv}\left(\left\{(0, \pm\rho), (\pm\rho, 0)\right\}\cup\left\{\left(\tfrac{\lceil m\rho\rceil-\lceil m'\rho\rceil}{m+m'+n+n'+1}, \tfrac{\lceil n\rho\rceil-\lceil n'\rho\rceil}{m+m'+n+n'+1}\right)\,|\,m,m',n,n'\in\mathbb{N}_0, \alpha_m^\rho, \alpha_{m'}^\rho,\alpha_n^\rho,\alpha_{n'}^\rho<\rho\right
\}\right)  \end{array}
\]

\end{claim}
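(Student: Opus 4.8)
The plan is to prove the two set inclusions separately, reducing each (as far as possible) to a statement about the generating sets rather than about their convex hulls.

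\emph{The inclusion $\subseteq$.} Here I would simply check that every generator of the left-hand set already occurs among the generators of the right-hand set, so that the containment of convex hulls follows formally. The four points $(0,\pm\rho),(\pm\rho,0)$ belong to both generating lists verbatim. A generator $\left(\tfrac{\varepsilon_1\lceil m\rho\rceil}{m+n+1},\tfrac{\varepsilon_2\lceil n\rho\rceil}{m+n+1}\right)$ with signs $\varepsilon_1,\varepsilon_2\in\{\pm1\}$ and $\alpha_m^\rho,\alpha_n^\rho<\rho$ is recovered on the right by taking the quadruple $(m,m',n,n')$ equal to $(m,0,n,0)$, $(0,m,0,n)$, $(m,0,0,n)$ or $(0,m,n,0)$ according to the sign pattern: in each case $m+m'+n+n'+1$ collapses to $m+n+1$, the numerators become $(\pm\lceil m\rho\rceil,\pm\lceil n\rho\rceil)$ with the right signs, and the four conditions $\alpha^\rho<\rho$ reduce to $\alpha_m^\rho,\alpha_n^\rho<\rho$ together with the trivial $\alpha_0^\rho=0<\rho$. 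Hence the left generating set is a subset of the right one.

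\emph{The inclusion $\supseteq$.} The substance is to show that each right-hand generator
$$P=(x_P,y_P)=\left(\tfrac{\lceil m\rho\rceil-\lceil m'\rho\rceil}{N},\ \tfrac{\lceil n\rho\rceil-\lceil n'\rho\rceil}{N}\right),\qquad N:=m+m'+n+n'+1,$$
lies in the left-hand convex hull. I would split on the dichotomy whether $\min(m,m')=\min(n,n')=0$ or not. If it holds, then one of $m,m'$ is zero and one of $n,n'$ is zero, so $N$ collapses to $i+j+1$ with $i:=\max(m,m')$, $j:=\max(n,n')$, the numerators become $\pm\lceil i\rho\rceil$ and $\pm\lceil j\rho\rceil$ (the signs recording which index vanished), and $P$ is literally one of the left-hand generators, with the needed conditions $\alpha_i^\rho,\alpha_j^\rho<\rho$ inherited (and $\alpha_0^\rho=0$ in the subcases where $i$ or $j$ is $0$). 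If the dichotomy fails, then $\min(m,m')+\min(n,n')\ge 1$, and I would bound the $\ell^1$-norm of $P$: since $\lceil m\rho\rceil=m\rho+\alpha_m^\rho$ with $0\le\alpha_m^\rho<\rho$ one has $|\lceil m\rho\rceil-\lceil m'\rho\rceil|<(|m-m'|+1)\rho$, and likewise for the $n$-pair, whence
$$|x_P|+|y_P|<\frac{(|m-m'|+|n-n'|+2)\rho}{N}=\frac{\big(N-1-2(\min(m,m')+\min(n,n'))\big)\rho}{N}\le\frac{(N-3)\rho}{N}<\rho,$$
using $|m-m'|+|n-n'|=m+m'+n+n'-2(\min(m,m')+\min(n,n'))$ and $m+m'+n+n'=N-1$. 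Thus $P$ lies strictly inside the ``diamond'' $\{(x,y):|x|+|y|\le\rho\}=\operatorname{Conv}\{(\pm\rho,0),(0,\pm\rho)\}$, which is contained in the left-hand convex hull. In either case $P$ lies in that hull, so $\operatorname{Conv}(\text{right generators})\subseteq\operatorname{Conv}(\text{left generators})$; combined with the previous paragraph this gives the claimed equality (no closedness of the hulls is needed, only the two generating-set containments).

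\emph{Expected difficulty.} No single computation here is hard; the care needed is in the bookkeeping of the dichotomy — verifying that $\min(m,m')=\min(n,n')=0$ is exactly the configuration in which the inequality $|m-m'|+|n-n'|+2\le N$ can fail, that this is exactly the configuration in which $P$ degenerates to an honest left-hand generator, and that the sign and $\alpha^\rho$ conditions transfer in all the boundary subcases ($m=m'=0$, and so on). It is also worth noting, for intuition, why the axis points $(0,\pm\rho),(\pm\rho,0)$ cannot be dispensed with: every generator of the form $\left(\tfrac{\pm\lceil a\rho\rceil}{a+b+1},\tfrac{\pm\lceil b\rho\rceil}{a+b+1}\right)$ carries a ``$+1$'' in its denominator, so a convex combination of $k$ of them lands on a point whose denominator is inflated relative to the target $N$; this is precisely why a difference-point $P$ with only a single ``$+1$'' in its denominator cannot in general be written from such generators alone, and the pure points on the axes are genuinely required in the second (non-degenerate) case.
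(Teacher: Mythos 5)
Your argument is correct, and it is worth noting that the paper itself offers no proof of this Claim at all: it is stated as a bare observation at the start of the Appendix's proof of Theorem \ref{four}, so there is nothing in the text to compare your argument against. Your proof supplies exactly the missing justification. The inclusion $\subseteq$ via the substitutions $(m,0,n,0)$, $(0,m,0,n)$, $(m,0,0,n)$, $(0,m,n,0)$ is routine and correct (using $\alpha_0^\rho=0<\rho$), and the real content is your treatment of $\supseteq$: the dichotomy on whether $\min(m,m')=\min(n,n')=0$, with the degenerate case collapsing to a literal left-hand generator and the non-degenerate case landing strictly inside the diamond $\operatorname{Conv}\{(\pm\rho,0),(0,\pm\rho)\}$, is the right idea and works. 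One arithmetic slip: since $|m-m'|+|n-n'|=(N-1)-2\bigl(\min(m,m')+\min(n,n')\bigr)$, you should have
\[
|x_P|+|y_P|<\frac{\bigl(|m-m'|+|n-n'|+2\bigr)\rho}{N}=\frac{\bigl(N+1-2(\min(m,m')+\min(n,n'))\bigr)\rho}{N},
\]
not $N-1-2(\cdots)$ as written; with $\min(m,m')+\min(n,n')\geqslant 1$ this is at most $\tfrac{(N-1)\rho}{N}<\rho$ rather than $\tfrac{(N-3)\rho}{N}$, so the conclusion is unaffected. Your closing remark about why the axis points cannot be dropped is also a genuine observation that the paper leaves implicit.
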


We modify Kwapisz's construction by starting with the one-dimensional model. First, we add two extra circles $\mathbb{S}^{(h')}$ and $\mathbb{S}^{(v')}$ to the two $\mathbb{S}^{(h)},\mathbb{S}^{(v)}$ used by Kwapisz. Our phase space $X$ is now a bouquet of four circles. Let $\psi$ be a Denjoy $C^1$-diffeomorphism of $\mathbb{S}^1=\mathbb{R}/\mathbb{Z}$ with rotation number $\rho$, and a wandering interval $J$ symmetric about the origin, where we identify subsets of $\mathbb{S}^1$ with a lift to $\mathbb{R}$ for simplicity. Let $p:\mathbb{S}^1\to\mathbb{S}^1$ be a map that collapses $I=\frac{17}{18}\cdot J$ to zero and is affine elsewhere, so that the monotone map $\phi:=\psi\circ p$ with a single flat plateau is smooth. Let $\eta$ be the map $[-1,1] \to [0,1]$  given by the formula $\eta (x) := 1-x^2$. We let $\tau$ denote the $\psi$-image of the origin and let $\eta_\tau:I\to[0,\tau]$ be the map of subsets of $S^1$ whose lift is a linear rescaling of $\eta$ to a transformation from $I$ to $[0, \tau]$. We define $-\eta_\tau:I \to  [-\tau,0]$ similarly. The following definition gives for each $\sigma\in  \{h,h',v,v'\}$ a continuous map $F^{(\sigma)} :X\to X$. We have the `projection' $X\to \mathbb{S}^1$ sending $x\to \underline{x}$, with the relation $\underline{x}^{(\sigma)}=x$ for $x\in \mathbb{S}^{\sigma} \subset X$. We set
\[
   \pm F^{(\sigma)}(x)=
\begin{cases}
    \psi^{\pm1}(\underline{x})^{(\sigma)},& \text{if } x\in \mathbb{S}^{(\sigma)}\\
  \pm  \eta_\tau(\underline{x})^{(\sigma)},& \text{if } x\in I^{(\tilde{\sigma})}, \tilde{\sigma}\neq\sigma\\
   p(\underline{x})^{(\tilde{\sigma})}& \text{if } x\in \mathbb{S}^{(\tilde{\sigma})}\setminus I^{(\tilde{\sigma})}, \tilde{\sigma} \neq\sigma.\\
\end{cases}
\]

Now we define the map $F$ as follows
$$F:=(-F^{(h')})\circ (-F^{(v')})\circ F^{(h)}\circ F^{(v)}.$$

Now we embed the bouquet $X$ into $\mathbb{T}^2$, with the circles $\mathbb{S}^{(h')}$ and $\mathbb{S}^{(h)}$ almost horizontal once essential circles, and the circles $\mathbb{S}^{(v')}$ and $\mathbb{S}^{(v)}$ almost vertical once essential circles. For example, considering the usual universal covering map $\pi:\mathbb{R}^2\to\mathbb{T}^2$ one can let $X$ in $\mathbb{T}^2$ be  
$$S^{(h)}=\pi(\{(z_1,z_2)\in [0,1]^2:z_2=\frac{1}{10}\sin^2(\pi z_1)\}),$$ 
$$S^{(h')}=\pi(\{(z_1,z_2)\in [0,1]^2:1-z_2=\frac{1}{10}\sin^2(\pi z_1)\}),$$ 
$$S^{(v)}=\pi(\{(z_1,z_2)\in [0,1]^2:z_1=\frac{1}{10}\sin^2(\pi z_2)\}),$$ 
$$S^{(v')}=\pi(\{(z_1,z_2)\in [0,1]^2:1-z_1=\frac{1}{10}\sin^2(\pi z_2)\}).$$ Let $\tilde{X}=\pi^{-1}(X)$. Since all the maps defined so far are homotopic to the identity, one can obtain lifts $\pm\tilde{F}^{(\sigma)}$ and
$$\tilde{F}:=(-\tilde{F}^{(h')})\circ (-\tilde{F}^{(v')})\circ \tilde{F}^{(h)}\circ \tilde{F}^{(v)}$$
\begin{figure}
    \centering
    \includegraphics[width=0.5\linewidth]{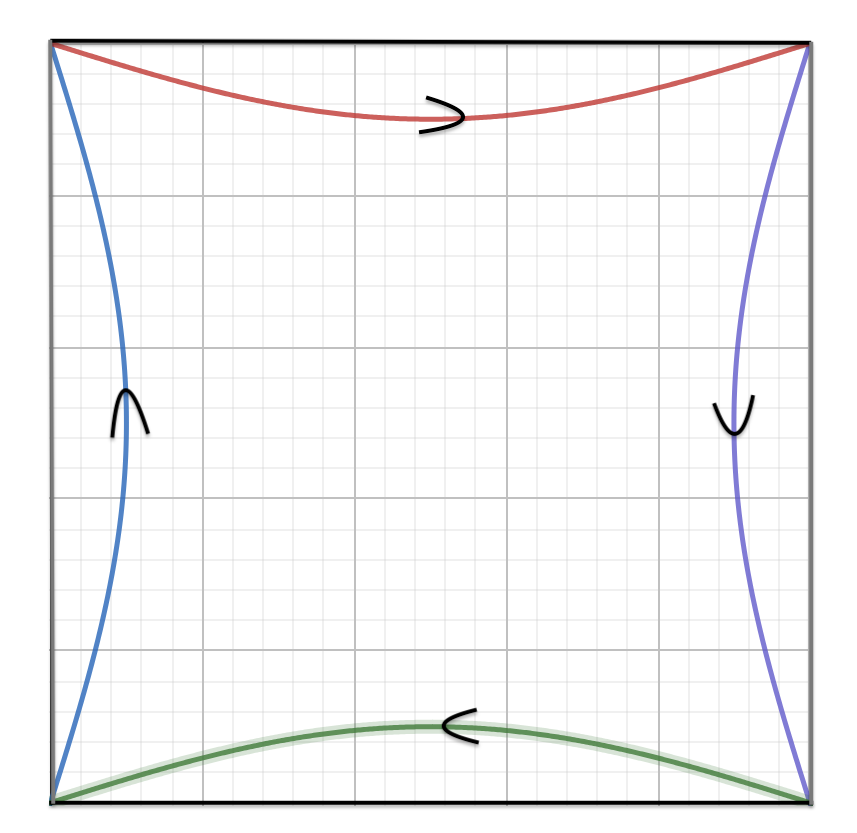}
    \caption{The embeding of $X$ in $\mathbb{T}^2$}
    \label{fig:enter-label}
\end{figure}
The following propositions are proven analogously to those in \cite{Kwapisz2}.
\begin{proposition}
\begin{itemize}
    \item  If for $\tilde x\in\tilde{X}$ the limit $\rho (\tilde{F},\tilde{x}) =\lim_{n\to\infty} \frac{1}{n}(\tilde{F}^n(\tilde x)-\tilde x)$ exists, then
$\rho(\tilde{F},\tilde{x})\in \Lambda_\rho.$
\item For any $m,n\in\mathbb{N}$ with $\alpha_m,\alpha_n<\rho$, there is $\tilde x\in \tilde X$ with $\rho(\tilde{F},\tilde{x})=\left(\tfrac{\lceil \pm m\rho\rceil}{m+n+1}, \tfrac{\lceil \pm n\rho\rceil}{m+n+1}\right)$.
\end{itemize}
\end{proposition}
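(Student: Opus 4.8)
The plan is to follow Kwapisz's argument in \cite{Kwapisz2} closely; the only genuinely new work is the itinerary bookkeeping forced by the four–loop bouquet and the two orientation–reversing factors of $F=(-F^{(h')})\circ(-F^{(v')})\circ F^{(h)}\circ F^{(v)}$. First I would establish an \emph{itinerary dichotomy} for the lift $\tilde F$: for $\tilde x$ lying over a point of a single loop $\mathbb{S}^{(\sigma)}$, one iterate of $\tilde F$ either advances the loop coordinate by one step of the one–dimensional model $\phi=\psi\circ p$ (reversed for $\sigma\in\{h',v'\}$) while keeping $\tilde x$ on $\mathbb{S}^{(\sigma)}$, as long as the loop coordinate misses the plateau $I$; or else the first factor in the fixed order $v,h,v',h'$ that finds the loop coordinate inside $I$ transports $\tilde x$ by $(\pm)\eta_\tau$ onto the lift of $[0,\pm\tau]$ in its target loop, landing it there away from that loop's copy of $I$ so that the remaining factors only reparametrize. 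The consequence is that along any orbit the point occupies a succession of \emph{sojourns}, each confined to one of the four loops and separated by single \emph{transition iterates}; a sojourn on $\mathbb{S}^{(h)}$ or $\mathbb{S}^{(h')}$ changes only the first coordinate of the $\mathbb{R}^2$–displacement (sign $+$ on $h$, $-$ on $h'$), a sojourn on $\mathbb{S}^{(v)}$ or $\mathbb{S}^{(v')}$ only the second, and a transition iterate contributes a bounded displacement with zero net winding.

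The quantitative core, proved verbatim as its two–loop counterpart in \cite{Kwapisz2}, is a \emph{winding lemma}: a finite sojourn on $\mathbb{S}^{(h)}$ beginning at a landing point $\eta_\tau(y)$, $y\in I$, lasts some number $m$ of iterates and accumulates first–coordinate displacement $\lceil m\rho\rceil$, and a sojourn of prescribed length $m$ terminating in a transition exists precisely when $\alpha_m^\rho<\rho$. Here the identity $\alpha_m^\rho=\{-m\rho\}$, the correspondence of the arc $[0,\tau]$ with $(0,\rho)$ under the semiconjugacy collapsing the plateau, and the wandering property of $I$ (the forward $\psi$–orbit of $0$ never re–enters $I$) all enter; the ceiling $\lceil m\rho\rceil$ rather than the floor comes from the sojourn starting just past $0$ inside $(0,\rho)$. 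A non–terminating sojourn — occurring exactly for landing points on the forward $\psi$–orbit of $\tau$ — contributes limiting winding rate $\pm\rho$ in the relevant coordinate, which accounts for the points $(0,\pm\rho)$ and $(\pm\rho,0)$.

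For the first bullet, I would split an orbit segment $\tilde x,\dots,\tilde F^{N}(\tilde x)$ at its transition iterates into \emph{cycles}; a generic cycle consists of an $h$–type sojourn of $m$ iterates (winding $\varepsilon_1\lceil m\rho\rceil$), a $v$–type sojourn of $n$ iterates (winding $\varepsilon_2\lceil n\rho\rceil$) and the intervening transition, so that its displacement is $(\varepsilon_1\lceil m\rho\rceil,\varepsilon_2\lceil n\rho\rceil)$ over $m+n+1$ iterates, while a cycle running through all four loops yields a ``difference point'' $\big(\tfrac{\lceil m\rho\rceil-\lceil m'\rho\rceil}{m+m'+n+n'+1},\tfrac{\lceil n\rho\rceil-\lceil n'\rho\rceil}{m+m'+n+n'+1}\big)$ of the kind appearing in the Claim. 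Hence $\tfrac1N(\tilde F^{N}(\tilde x)-\tilde x)$ is, up to the vanishing contribution of the at most two incomplete cycles at the ends, a convex combination of cycle averages, which the Claim identifies as points of $\Lambda'_\rho$; since $\Lambda'_\rho$ is closed and convex, any existing limit lies in $\Lambda'_\rho$. For the second bullet I would run the winding lemma in reverse: given $m,n$ with $\alpha_m^\rho,\alpha_n^\rho<\rho$ and signs $(\varepsilon_1,\varepsilon_2)$, pick $\sigma_1\in\{h,h'\}$ and $\sigma_2\in\{v,v'\}$ matching the signs and choose $\tilde x$ (in the grand orbit of a suitable point of $I$, using that the set of landing points with return time $m$ to $I$ is nonempty exactly because $\alpha_m^\rho<\rho$) whose itinerary is eventually the periodic word ``sojourn of $m$ iterates on $\mathbb{S}^{(\sigma_1)}$, transition, sojourn of $n$ iterates on $\mathbb{S}^{(\sigma_2)}$, transition''; the winding lemma makes each $h$–block wind $\varepsilon_1\lceil m\rho\rceil$ and each $v$–block wind $\varepsilon_2\lceil n\rho\rceil$ in disjoint coordinates, giving $\rho(\tilde F,\tilde x)=\big(\varepsilon_1\tfrac{\lceil m\rho\rceil}{m+n+1},\varepsilon_2\tfrac{\lceil n\rho\rceil}{m+n+1}\big)$.

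The hard part will be the itinerary dichotomy: one must verify that the four–factor composition with its two orientation reversals still produces the clean sojourn/transition structure — in particular that the priority order $v,h,v',h'$ never lets a point wander through more than one loop in a single iterate, and that the sign carried by $(\pm)\eta_\tau$ is the one matching the target loop's orientation, so that $h,h'$ feed only the first coordinate (with signs $+,-$) and $v,v'$ only the second. Once that is in place everything downstream is a transcription of \cite{Kwapisz2}, including the Claim, which reduces to the elementary observation that each difference point of its right–hand side is a convex combination of the single–quadrant generators $\big(\pm\tfrac{\lceil m\rho\rceil}{m+n+1},\pm\tfrac{\lceil n\rho\rceil}{m+n+1}\big)$ of $\Lambda'_\rho$ together with the points $(0,\pm\rho),(\pm\rho,0)$, and conversely; combining the two bullets with this then gives $\rho(F)=\Lambda'_\rho$ and completes Theorem~\ref{four}.
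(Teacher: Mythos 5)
Your proposal follows the same route as the paper: the paper offers no written proof of this proposition beyond the assertion that it is ``proven analogously to those in \cite{Kwapisz2}'', and your sketch is a faithful elaboration of exactly that analogy --- the sojourn/transition itinerary structure, the winding count $\lceil m\rho\rceil$ over an $m$-iterate sojourn governed by the condition $\alpha_m^\rho<\rho$, the cycle decomposition for the first bullet, and the periodic-itinerary construction for the second. You also correctly identify the role of the Claim (absorbing the four-loop ``difference points'' into the convex hull of the single-quadrant generators), which is the only genuinely new ingredient relative to Kwapisz's two-loop argument.
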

\begin{proposition}
$\rho(\tilde{F})=\Lambda_\rho$
\end{proposition}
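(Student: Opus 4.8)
The plan is to read off the equality from the two propositions immediately above it, together with the Misiurewicz--Ziemian identity $\rho_{\text{MZ}}(\tilde F)=\text{Conv}\,\rho_{\text{p}}(\tilde F)$ and the fact that $\rho_{\text{MZ}}(\tilde F)$ is automatically compact and convex. Since $\Lambda'_\rho$ is itself compact and convex (Definition~\ref{def:Lambda_rho} together with the Claim at the start of this appendix), the whole argument splits into the two inclusions $\rho_{\text{MZ}}(\tilde F)\subseteq\Lambda'_\rho$ and $\Lambda'_\rho\subseteq\rho_{\text{MZ}}(\tilde F)$, each of which becomes routine once those propositions are granted.

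For $\rho_{\text{MZ}}(\tilde F)\subseteq\Lambda'_\rho$: the first item of the preceding proposition gives $\rho(\tilde F,\tilde x)\in\Lambda'_\rho$ whenever $\tilde x\in\tilde X$ and the pointwise limit exists, so every pointwise rotation vector realized on the bouquet lies in $\Lambda'_\rho$. To upgrade this to arbitrary $\tilde x\in\mathbb{R}^2$ one uses, exactly as in \cite{Kwapisz2}, that the collapsing map $p$ and the funnelling maps $\eta_\tau$ force every orbit to be asymptotic to the dynamics carried by $\tilde X$, so that $\tilde X$ holds the entire rotational content of $\tilde F$ and hence $\rho_{\text{p}}(\tilde F)\subseteq\Lambda'_\rho$. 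Taking closed convex hulls and applying Misiurewicz--Ziemian then gives $\rho_{\text{MZ}}(\tilde F)=\text{Conv}\,\rho_{\text{p}}(\tilde F)\subseteq\Lambda'_\rho$.

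For $\Lambda'_\rho\subseteq\rho_{\text{MZ}}(\tilde F)$: by the second item of the preceding proposition, for every admissible pair $m,n\in\mathbb{N}_0$ (those with $\alpha_m^\rho,\alpha_n^\rho<\rho$) and every choice of signs the point $\bigl(\tfrac{\pm\lceil m\rho\rceil}{m+n+1},\tfrac{\pm\lceil n\rho\rceil}{m+n+1}\bigr)$ occurs as a pointwise rotation vector, hence belongs to $\rho_{\text{p}}(\tilde F)\subseteq\rho_{\text{MZ}}(\tilde F)$; since $\rho_{\text{MZ}}(\tilde F)$ is closed and convex it then contains the closed convex hull of all of these points. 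Identifying that hull with $\Lambda'_\rho$ --- which amounts to absorbing, upon passing to the closure, the axis extreme points $(0,\pm\rho),(\pm\rho,0)$ (accumulation points of the admissible points $A^\rho_{0,n}$ and their reflections), and to reconciling the ``difference'' form $\lceil m\rho\rceil-\lceil m'\rho\rceil$ produced by the composition $(-F^{(h')})\circ(-F^{(v')})\circ F^{(h)}\circ F^{(v)}$ with the form appearing in Theorem~\ref{four} --- is exactly the content of the Claim at the start of the appendix. The two inclusions together give $\rho_{\text{MZ}}(\tilde F)=\Lambda'_\rho$. Granting the two preceding propositions there is no serious obstacle; the only point that needs a little care is the first inclusion, where one must know that the rotation vectors of orbits not contained in $\tilde X$ cannot escape $\Lambda'_\rho$, and this is handled precisely as in Kwapisz's original construction.
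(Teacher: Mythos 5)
Your proposal is correct and matches the paper's (essentially unstated) argument: the paper merely asserts that this proposition is ``proven analogously to those in \cite{Kwapisz2}'', and your deduction from the two items of the preceding proposition, the Misiurewicz--Ziemian identity, and the Claim at the start of the appendix is exactly the intended route. One small imprecision: $\tilde F$ is defined only on $\tilde X$ (not on all of $\mathbb{R}^2$), so the worry about orbits outside $\tilde X$ is vacuous here and only becomes relevant for the later diffeomorphism $G$; this does not affect the validity of your argument.
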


Finally, we construct in stages a $C^1$ diffeomorphism $G$ of $\mathbb{T}^2$ that reflects the dynamics of $F$. First we define a diffeomorphism corresponding to each $\pm F^{(\sigma)}$. The construction is almost identical to that of Kwapisz, and so we only highlight the differences. First form a neighbourhood $U$ of $X$ in $\mathbb{T}^2$ consisting of thin strips about each $X^{(\rho)}$ which intersect only in a symmetric neighbourhood of the origin, the common point of all the $X^{(\rho)}$. The complement $\mathbb{T}^2 \setminus U$ is then $3$ topological disks $D_i$, $i=1,2,3$.  We then form a retraction $r\colon U\setminus C \to X\setminus C$ outside of a symmetric neighbourhood $C$ of the origin so that the pre-images of points under $r$ are small arcs transverse to $X$. The neighbourhood $C$ needs to be chosen sufficiently small so that it corresponds to intervals in the $X^{(\rho)}$ with image within the wandering interval as detailed by Kwapisz. The $C^1$ diffeomorphism, denoted $\pm G^{(\sigma)}$, is then defined on the closure of $U$ in two stages. First, contract the closure of $U$ so that the arcs forming the pre-images of $r$ are preserved under the contraction. Then, we map the contracted image of $C$ diffeomorphically to its image so that the $8$ strips emanating from the origin do not intersect and stay within the pre-image under $r$ of the wandering interval in $X^{(\sigma)}$. Then, outside the closure of $C$, identifying the arcs in the pre-images of $r$ with their images under $r$, map the arcs as indicated by the map $\pm F^{(\sigma)}$, where the arcs are mapped to their image arc diffeomorphically and do not intersect the image of any other arcs. As the retraction $r$ semiconjugates the action of $\pm G^{(\sigma)}$ on $\cap_{n\in \mathbb{Z}^+}(\pm G^{(\sigma)})^n(U)$ away from the points corresponding to the wandering interval and its images with the action of $\pm F ^{(\sigma)}$, we have that the rotational behaviour of all points in $U$ is captured by how $\pm F^{(\sigma)}$ acts on $X$. The differences between $(\pm G^{(\sigma)})$ and $(\pm F^{(\sigma)})$ on  $C$ and its images are washed out as the wandering interval does not alter the rotation set. Then extend $\pm G^{(\sigma)}$ to all of $\mathbb{T}^2$ by introducing  a source in each of the disks $D_i$ (using the same point as a source for the various maps). Thus, with $V$ denoting the complement of the $3$ sources, we have $\cap_{n\in \mathbb{Z}^+}(\pm G^{(\sigma)})^n(V)=\cap_{n\in \mathbb{Z}^+}(\pm G^{(\sigma)})^n(U)$. 

Then we form the desired $C^1$ diffeomorphism of $\mathbb{T}^2$ with rotation set $\Lambda_\rho$ as the composition,
\[
G:=(-G^{(h')})\circ (-G^{(v')})\circ G^{(h)}\circ G^{(v)}.
\]

As the diffeomorphism $G$ constructed in this way respects the fibers of the retraction, there is no direct modification to obtain any smoothness beyond $C^1$. If one is only concerned with constructing a homeomorphism of $\mathbb{T}^2$ with rotation set $\Lambda_\rho$, then one can apply the results of Brown \cite{Brown} (see also \cite{BCH1}) to a near homeomorphism $H$ that has 3 repelling fixed points and has $X$ as an invariant attractor on which $H$ acts as $F$. This construction is significantly simpler, but it does not yield control of the smoothness.
\end{proof}

\section{Acknowledgments}
The authors thank Phil Boyland, Sylvain Crovisier and Tamara Kucherenko for useful discussions. The first author particularly thanks the 'Foundation Math\'ematique Jacques Hadamard' (FMJH) for financial support, Fr\'ed\'eric Pascal (ENS Paris-Saclay) and the second author for supervising his research internship at Jagiellonian University (Krak\'ow) which led to his contribution to this article, and Filip Wierzbowski for fruitful discussions. The second author's research was partially supported by EU funds under the project No.~CZ.02.01.01/00/23\_021/0008759, through the Operational Programme Johannes Amos Comenius.

\end{document}